\newtheorem{theorem}{Theorem}[section]
\newtheorem{lemma}[theorem]{Lemma}
\newtheorem{corollary}[theorem]{Corollary}
\theoremstyle{definition}
\theoremstyle{remark}
\numberwithin{equation}{section}
\newcommand{\cA}{\mathcal{A}}
\newcommand{\cB}{\mathcal{B}}
\newcommand{\cC}{\mathcal{C}}
\newcommand{\cE}{\mathcal{E}}
\newcommand{\cF}{\mathcal{F}}
\newcommand{\cK}{\mathcal{K}}
\newcommand{\cM}{\mathcal{M}}
\newcommand{\mC}{\mathbb{C}}
\newcommand{\mN}{\mathbb{N}}
\newcommand{\mR}{\mathbb{R}}
\newcommand{\mZ}{\mathbb{Z}}
\newcommand{\fa}{\mathfrak{a}}
\newcommand{\fb}{\mathfrak{b}}
\newcommand{\fc}{\mathfrak{c}}
\newcommand{\ff}{\mathfrak{f}}
\newcommand{\fg}{\mathfrak{g}}
\newcommand{\fr}{\mathfrak{r}}
\newcommand{\fs}{\mathfrak{s}}
\newcommand{\fA}{\mathfrak{A}}
\newcommand{\Ind}{\operatorname{Ind}}
\newcommand{\Co}{\operatorname{Co}}
\newcommand{\Op}{\operatorname{Op}}
\begin{document}
\setcounter{page}{1}

\title[Fredholmness and Index of Simplest Weighted SIOS]
{Fredholmness and Index of Simplest\\
Weighted Singular Integral Operators\\
with Two Slowly Oscillating Shifts}

\author[A. Karlovich]{Alexei Yu. Karlovich}
\address{%%
Centro de Matem\'atica e Aplica\c{c}\~oes (CMA) and
Departamento de Matem\'atica,
Faculdade de Ci\^encias e Tecnologia,
Universidade Nova de Lisboa,
Quinta da Torre,
2829--516 Caparica,
Portugal}
\email{\textcolor[rgb]{0.00,0.00,0.84}{oyk@fct.unl.pt}}

\dedicatory{Dedicated to Professor Yuri I. Karlovich on his 65th anniversary}

\subjclass[2010]{Primary 47B35; Secondary 45E05, 47A53, 47G10, 47G30.}

\keywords{%
Fredholmness,
index,
slowly oscillating shift,
weighted singular integral operator,
Mellin pseudodifferential operator.}

%\date{Received: xxxxxx; Revised: yyyyyy; Accepted: zzzzzz.}

\begin{abstract}
Let $\alpha$ and $\beta$ be orientation-preserving diffeomorphisms
(shifts) of $\mR_+=(0,\infty)$ onto itself with the only fixed points $0$ and
$\infty$, where the derivatives $\alpha'$ and $\beta'$ may have discontinuities
of slowly oscillating type at $0$ and $\infty$. For $p\in(1,\infty)$, we consider
the weighted shift operators $U_\alpha$ and $U_\beta$ given on the Lebesgue space
$L^p(\mR_+)$ by $U_\alpha f=(\alpha')^{1/p}(f\circ\alpha)$ and $U_\beta f=
(\beta')^{1/p}(f\circ\beta)$. For $i,j\in\mZ$ we study the simplest weighted singular
integral operators with two shifts $A_{ij}=U_\alpha^i P_\gamma^++U_\beta^j P_\gamma^-$
on $L^p(\mR_+)$, where $P_\gamma^\pm=(I\pm S_\gamma)/2$ are operators
associated to the weighted Cauchy singular integral operator
$$
(S_\gamma f)(t)=\frac{1}{\pi i}\int_{\mR_+}
\left(\frac{t}{\tau}\right)^\gamma\frac{f(\tau)}{\tau-t}d\tau
$$
with $\gamma\in\mC$ satisfying $0<1/p+\Re\gamma<1$. We prove that
the operator $A_{ij}$ is a Fredholm operator on $L^p(\mR_+)$ and
has zero index if
\[
0<\frac{1}{p}+\Re\gamma+\frac{1}{2\pi}\inf_{t\in\mR_+}(\omega_{ij}(t)\Im\gamma),
\quad
\frac{1}{p}+\Re\gamma+\frac{1}{2\pi}\sup_{t\in\mR_+}(\omega_{ij}(t)\Im\gamma)<1,
\]
where $\omega_{ij}(t)=\log[\alpha_i(\beta_{-j}(t))/t]$ and $\alpha_i$, $\beta_{-j}$
are iterations of $\alpha$, $\beta$. This statement extends
an earlier result obtained by the author, Yuri Karlovich, and Amarino Lebre for $\gamma=0$.
\end{abstract}
\maketitle
%%%--------------------------------------------------------------------------------------
\section{Introduction}
Let $\cB(X)$ be the Banach algebra of all bounded linear operators
acting on a Banach space $X$ and let $\cK(X)$ be the ideal of all
compact operators in $\cB(X)$. An operator $A\in\cB(X)$ is called
\textit{Fredholm} if its image is closed and the spaces $\ker A$
and $\ker A^*$ are finite-dimensional. In that case the number
\[
\Ind A:=\dim\ker A-\dim\ker A^*
\]
is referred to as the {\it index} of $A$ (see, e.g., \cite[Chap.~4]{GK92}).
For $A,B\in\cB(X)$, we will write $A\simeq B$ if $A-B\in\cK(X)$.
Recall that an operator $B_r\in\cB(X)$
(resp. $B_l\in\cB(X)$) is said to be a right (resp. left) regularizer for $A$ if
\[
AB_r\simeq I \quad(\mbox{resp.}\quad B_lA\simeq I).
\]
It is well known that an operator $A$ is Fredholm on $X$ if and only if it
admits simultaneously a right and a left regularizers. Moreover, each right
regularizer differs from each left regularizer by a compact operator
(see, e.g., \cite[Chap.~4, Section 7]{GK92}).

Following Sarason \cite[p.~820]{S77}, a bounded continuous function $f$ on $\mR_+=(0,\infty)$ is called
slowly oscillating (at $0$ and $\infty$) if for each (equivalently,
for some) $\lambda\in(0,1)$,
\[
\lim_{r\to s}\sup_{t,\tau\in[\lambda r,r]}|f(t)-f(\tau)|=0
\quad\mbox{for}\quad
s\in\{0,\infty\}.
\]
The set $SO(\mR_+)$ of all slowly oscillating functions forms a
$C^*$-algebra. This algebra properly contains $C(\overline{\mR}_+)$,
the $C^*$-algebra of all continuous functions on $\overline{\mR}_+
:=[0,+\infty]$.
Note that this notion of slow oscillation does not involve
any differentiability. Various modifications of it were studied in many
works, just to mention a few, see \cite{KS11,KLH12,KLH13,MSS14,P80,S07}.
On the other hand, there is also another (stronger) notion of slow oscillation (or slow variation)
in the literature. That notion is defined by using at least the first derivative
of a function and goes back to Grushin \cite{G70}, it was extensively used
since then in works on pseudodifferential operators and their applications,
see e.g., \cite{K82,R92,R98,RRS04}.

Suppose $\alpha$ is an orientation-preserving
diffeomorphism of $\mR_+$ onto itself, which has only two fixed
points $0$ and $\infty$. We say that $\alpha$ is a slowly oscillating
shift if $\log\alpha'$ is bounded and $\alpha'\in SO(\mR_+)$. The
set of all slowly oscillating shifts is denoted by $SOS(\mR_+)$.

We suppose that $1<p<\infty$. It is easily
seen that if $\alpha\in SOS(\mR_+)$, then the shift operator
$W_\alpha$ defined by $W_\alpha f=f\circ\alpha$ is bounded and
invertible on all spaces $L^p(\mR_+)$ and its inverse is given
by $W_\alpha^{-1}=W_{\alpha_{-1}}$, where $\alpha_{-1}$ is the
inverse function to $\alpha$. Along with $W_\alpha$ we consider
the weighted shift operator
\[
U_\alpha:=(\alpha')^{1/p}W_\alpha
\]
being an isometric isomorphism of the Lebesgue space $L^p(\mR_+)$
onto itself. It is clear that $U_\alpha^{-1}=U_{\alpha_{-1}}$.

Let $\Re\gamma$ and $\Im\gamma$ denote the real and imaginary part
of $\gamma\in\mC$, respectively. As usual, $\overline{\gamma}=\Re\gamma-i\Im\gamma$
denotes the complex conjugate of $\gamma$. If $\gamma\in\mC$ satisfies
$0<1/p+\Re\gamma<1$, then the weighted Cauchy singular integral
operator $S_\gamma$, given by
\[
(S_\gamma f)(t):=\frac{1}{\pi i}\int_{\mR_+}
\left(\frac{t}{\tau}\right)^\gamma\frac{f(\tau)}{\tau-t}d\tau,
\]
where the integral is understood in the principal value sense,
is bounded on the Lebesgue space $L^p(\mR_+)$ (see, e.g.,
\cite[Section~1.10.2]{DS08},
\cite{D79},
\cite[Section~2.1.2]{HRS94},
\cite[Proposition~4.2.11]{RSS11}). Put
\[
P_\gamma^\pm:=(I\pm S_\gamma)/2.
\]
Consider the weighted singular integral operators with
two shifts $\alpha,\beta\in SOS(\mR_+)$ given by
%%%
\begin{equation}\label{eq:operator}
A_{ij}:=U_\alpha^iP_\gamma^++U_\beta^jP_\gamma^-,
\quad i,j\in\mZ.
\end{equation}
%%%
Here, by definition, $U_\alpha^i=(U_\alpha^{-1})^{|i|}$ and $U_\beta^{j}=(U_\beta^{-1})^{|j|}$
for negative $i$ and $j$, respectively.

In \cite{KKL14} we called such operators (with $\gamma=0$) simplest singular integral
operators with two shifts because they do not involve functional coefficients, in contrast
to as it is traditionally considered (see, e.g., \cite{KKL11a,KKL11b,KL94,L77}). In that
paper we proved that if $\gamma=0$, then all operators \eqref{eq:operator} are Fredholm
and their indices are equal to zero. This paper is a sequel of \cite{KKL14}, here our aim
is to extend that result to the case of $\gamma\in\mC$ satisfying $0<1/p+\Re\gamma<1$.

For a shift $\alpha\in SOS(\mR_+)$, put $\alpha_0(t):=t$ and $\alpha_i(t):=\alpha[\alpha_{i-1}(t)]$
for every $i\in\mZ$ and $t\in\mR_+$.
%%%--------------------------------------------------------------------------------------
\begin{theorem}[Main result]
\label{th:main}
Suppose $1<p<\infty$ and $\gamma\in\mC$ is such that $0<1/p+\Re\gamma<1$. If
$\alpha,\beta\in SOS(\mR_+)$, $i,j\in\mZ$, and
\[
0<\frac{1}{p}+\Re\gamma+\frac{1}{2\pi}\inf_{t\in\mR_+}(\omega_{ij}(t)\Im\gamma),
\quad
\frac{1}{p}+\Re\gamma+\frac{1}{2\pi}\sup_{t\in\mR_+}(\omega_{ij}(t)\Im\gamma)<1,
\]
where $\omega_{ij}(t)=\log[\alpha_i(\beta_{-j}(t))/t]$, then the operator $A_{ij}$ given by
\eqref{eq:operator} is Fredholm on the space $L^p(\mR_+)$ and its index is equal to zero.
\end{theorem}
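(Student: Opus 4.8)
The plan is to exploit that $U_\alpha$ and $U_\beta$ are invertible isometries in order to strip off one of the two shifts, and then to analyze the resulting one-shift operator by Mellin pseudodifferential calculus. First I would record that $U_aU_b=U_{b\circ a}$, so $U_\alpha^i=U_{\alpha_i}$ and $U_\beta^{-j}=U_{\beta_{-j}}$, and that multiplying $A_{ij}$ on the left by the invertible isometry $U_\beta^{-j}$ yields
\[
U_\beta^{-j}A_{ij}=U_gP_\gamma^++P_\gamma^-=:B,\qquad g:=\alpha_i\circ\beta_{-j}\in SOS(\mR_+),
\]
where $g$ is again a slowly oscillating shift and $\omega_{ij}(t)=\log[g(t)/t]$. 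Since $U_\beta^{-j}$ is invertible, $A_{ij}$ is Fredholm if and only if $B$ is, and $\Ind A_{ij}=\Ind B$. Because $\log g'$ is bounded and $g(0)=0$, the function $\omega_{ij}$ is bounded and slowly oscillating, so the infimum and supremum in the hypotheses are finite; this is the reduction I will work with throughout.

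Second, I would transfer everything to the line. The exponential change of variable $t=e^x$ gives an isometric isomorphism $L^p(\mR_+)\to L^p(\mR)$ under which $S_\gamma$ becomes a Fourier convolution (Mellin) operator with symbol
\[
\sigma_\gamma(\eta)=-i\cot\!\big(\pi(1/p+\gamma+i\eta)\big),\qquad \eta\in\mR,
\]
which is well defined and bounded precisely because $0<1/p+\Re\gamma<1$ keeps the argument off the poles of $\cot$; correspondingly $P_\gamma^\pm$ have symbols $p_\gamma^\pm=(1\pm\sigma_\gamma)/2$ tending to the complementary limits $0$ and $1$ at $\pm\infty$. Under the same transform $U_g$ becomes a weighted slowly oscillating shift $x\mapsto h(x)=x+\omega_{ij}(e^x)$ with weight tending to $1$. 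I would then invoke the Fredholm theory for the Banach algebra generated by Mellin convolutions with continuous symbols and such slowly oscillating shifts (as used in \cite{KKL14}), which provides a symbol homomorphism localized over the maximal ideal spaces $M_0$ and $M_\infty$ of $SO(\mR_+)$ at the fixed points, whose invertibility is equivalent to Fredholmness of $B$.

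Third, I would compute the Fredholm symbol of $B$. Localizing at a point $\xi$ of the fiber $M_s$ with $s\in\{0,\infty\}$, the slowly oscillating shift $U_g$ acts as the translation by the limiting value $\omega_{ij}(\xi)$ of $\omega_{ij}$ along $\xi$, so the local symbol is of paired type, $e^{-i\omega_{ij}(\xi)\eta}p_\gamma^+(\eta)+p_\gamma^-(\eta)$. The translation interacts with the complex shift $i\gamma$ hidden in $\sigma_\gamma$ so that the effective weight exponent of the associated Cauchy operator becomes $1/p+\Re\gamma+\omega_{ij}(\xi)\Im\gamma/(2\pi)$, and the local symbol is invertible exactly when this number lies in the open interval $(0,1)$. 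Requiring this for every $\xi$, i.e. for every value of $\omega_{ij}$ between its infimum and supremum (and for both signs of $\Im\gamma$, which is why the two one-sided bounds appear), reproduces precisely the two hypotheses of the theorem and yields Fredholmness of $B$, hence of $A_{ij}$. This localization and symbol computation, together with the verification that $U_g$ genuinely lies in the Mellin pseudodifferential framework and that the relevant commutators are compact, is the step I expect to be the main obstacle.

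Finally, for the index I would use homotopy invariance. The path $\gamma_s=\Re\gamma+i(1-s)\Im\gamma$, $s\in[0,1]$, keeps $B_{\gamma_s}=U_gP_{\gamma_s}^++P_{\gamma_s}^-$ norm-continuous, and since the expression $1/p+\Re\gamma+\omega_{ij}(t)\Im\gamma_s/(2\pi)$ is affine in $s$ with both endpoints in $(0,1)$, the two inequalities persist along the whole path; thus every $B_{\gamma_s}$ is Fredholm and $\Ind B=\Ind(U_gP_{\Re\gamma}^++P_{\Re\gamma}^-)$. A second homotopy letting the real parameter run from $\Re\gamma$ to $0$ (which stays inside $0<1/p+\cdot<1$) reduces the computation to the case $\gamma=0$, for which the index is zero by \cite{KKL14}. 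Hence $\Ind A_{ij}=\Ind B=0$.
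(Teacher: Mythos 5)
Your opening reduction (peeling off $U_\beta^j$ so that everything hinges on $B=U_gP_\gamma^++P_\gamma^-$ with $g=\alpha_i\circ\beta_{-j}\in SOS(\mR_+)$) is exactly the paper's reduction of Theorem~\ref{th:main} to Theorem~\ref{th:one-shift}, and your identification of the ``effective exponent'' $1/p+\Re\gamma+\omega_{ij}(\xi)\Im\gamma/(2\pi)\in(0,1)$ correctly anticipates where the hypotheses come from (compare Lemma~\ref{le:tech}). The genuine gap is your third step, the Fredholm criterion for $B$. You invoke ``the Fredholm theory for the Banach algebra generated by Mellin convolutions with continuous symbols and such slowly oscillating shifts (as used in \cite{KKL14})'', with a symbol homomorphism over the fibers $M_0(SO(\mR_+))$, $M_\infty(SO(\mR_+))$ sending $U_g$ to translation by $\omega_{ij}(\xi)$. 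No such calculus is used or established in \cite{KKL14}, and none is available in the framework of this paper: the Mellin PDO machinery both papers rely on (Theorems~\ref{th:boundedness-PDO}--\ref{th:Fredholmness-PDO}) applies only to symbols in $C_b(\mR_+,V(\mR))$, i.e.\ with uniformly bounded total variation in $x$, whereas the would-be symbol of $B$, namely $\fa(t,x)=e^{i\omega_{ij}(t)x}p_\gamma^+(x)+p_\gamma^-(x)$, has \emph{infinite} total variation in $x$ whenever $\omega_{ij}(t)\neq 0$, since $p_\gamma^+(x)\to 1$ as $x\to+\infty$ and so $\fa(t,\cdot)$ oscillates like $e^{i\omega_{ij}(t)x}$ on a half-line; the same defect rules out treating $U_g$ alone as an admissible Mellin PDO (and, incidentally, the accompanying weight $(1+t\omega'(t))^{1/p}$ is an $SO(\mR_+)$ function that need not tend to $1$, contrary to what you assert). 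So the step you yourself flag as ``the main obstacle'' is not a technicality to be verified: it is precisely the obstruction that forces the paper onto a different route, and your proposal contains no mechanism to overcome it.

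The idea missing from your proposal is the companion-operator trick (Theorem~\ref{th:reduction}): instead of analyzing $B$ directly, one multiplies it in both orders by $U_g^{-1}P_{\overline{\gamma}}^++P_{\overline{\gamma}}^-$; by the commutation Lemma~\ref{le:compactness-commutators} and the exact product identities of Lemma~\ref{le:P-pm-relations}, both products equal, modulo compacts, the operator $A_{g,\gamma}$ of \eqref{eq:operator-A}, in which the shift appears only in the combinations $U_g^{\pm 1}R_\gamma R_{\overline{\gamma}}$. Because $r_\gamma$ decays exponentially at $\pm\infty$, these combinations \emph{are} Mellin PDOs with symbols $e^{\pm i\omega(t)x}r_\gamma(x)r_{\overline{\gamma}}(x)$ lying in $\widetilde{\cE}(\mR_+,V(\mR))$ (Lemmas~\ref{le:fb} and~\ref{le:shift-R-gamma}), so Theorem~\ref{th:Fredholmness-PDO} applies to $A_{g,\gamma}$. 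Fredholmness of $A_{g,\gamma}$ then yields regularizers for $B$ and its companion, the index-sum identity gives $\Ind B+\Ind(U_g^{-1}P_{\overline{\gamma}}^++P_{\overline{\gamma}}^-)=\Ind A_{g,\gamma}=0$, and the anti-linear conjugation trick (Lemma~\ref{le:complex-conjugation}) shows the two indices coincide, hence both vanish. Your concluding homotopy in $\gamma$ (deforming $\Im\gamma$ to $0$, then $\Re\gamma$ to $0$, and citing \cite{KKL14}) is a legitimate alternative to the paper's index computation (which instead scales $\omega$ by $\theta\in[0,1]$ at the symbol level), but it requires Fredholmness at every point of the path, so it cannot repair the missing central step.
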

%%%--------------------------------------------------------------------------------------
By \cite[Corollary~2.5]{KKL14}, if $\alpha,\beta\in SOS(\mR_+)$,
then $\delta_{ij}:=\alpha_i\circ\beta_{-j}$ belongs to $SOS(\mR_+)$ for all $i,j\in\mZ$. Since
$A_{ij}=U_\beta^j(U_{\delta_{ij}}P_\gamma^++P_\gamma^-)$, where
$U_{\delta_{ij}}=U_{\beta_{-j}}U_{\alpha_i}=U_\beta^{-j}U_\alpha^i$, the proof of
Theorem~\ref{th:main} is immediately reduced to the proof of the following partial case
(treated for $\gamma=0$ in \cite[Theorem~5.2]{KKL14}).
%%%--------------------------------------------------------------------------------------
\begin{theorem}\label{th:one-shift}
Suppose $1<p<\infty$ and $\gamma\in\mC$ is such that $0<1/p+\Re\gamma<1$.
If $\alpha\in SOS(\mR_+)$ and
%%%
\begin{equation}\label{eq:main-condition}
0<\frac{1}{p}+\Re\gamma+\frac{1}{2\pi}\inf_{t\in\mR_+}(\omega(t)\Im\gamma),
\quad
\frac{1}{p}+\Re\gamma+\frac{1}{2\pi}\sup_{t\in\mR_+}(\omega(t)\Im\gamma)<1,
\end{equation}
%%%
where $\omega(t)=\log[\alpha(t)/t]$,
then the operators $U_\alpha P_\gamma^+ +P_\gamma^-$
and $U_\alpha^{-1}P_{\overline{\gamma}}^++P_{\overline{\gamma}}^-$
are Fredholm on the space $L^p(\mR_+)$ and
\[
\Ind(U_\alpha P_\gamma^++P_\gamma^-)
=
\Ind(U_\alpha^{-1}P_{\overline{\gamma}}^++P_{\overline{\gamma}}^-)
=0.
\]
\end{theorem}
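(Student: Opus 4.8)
The plan is to prove the single statement that, for every $\alpha\in SOS(\mR_+)$ and every $\gamma\in\mC$ with $0<1/p+\Re\gamma<1$ satisfying \eqref{eq:main-condition}, the operator $A:=U_\alpha P_\gamma^++P_\gamma^-$ is Fredholm of index zero; the second operator then comes for free. Indeed, since $\alpha_{-1}\in SOS(\mR_+)$, applying this statement with the pair $(\alpha_{-1},\overline\gamma)$ in place of $(\alpha,\gamma)$ yields the assertion for $U_\alpha^{-1}P_{\overline\gamma}^++P_{\overline\gamma}^-=U_{\alpha_{-1}}P_{\overline\gamma}^++P_{\overline\gamma}^-$, and the substitution $u=\alpha_{-1}(t)$ shows that $\omega_{-1}(t)\Im\overline\gamma=\omega(u)\Im\gamma$, where $\omega_{-1}(t)=\log[\alpha_{-1}(t)/t]$, so condition \eqref{eq:main-condition} for $(\alpha_{-1},\overline\gamma)$ coincides with \eqref{eq:main-condition} for $(\alpha,\gamma)$. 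Thus it suffices to treat $A$.

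The first step is to pin down where the difficulty lies. Writing $S_\gamma=t^\gamma S_0\,t^{-\gamma}$ (a conjugation by a power weight that is legitimate at the level of kernels) one checks that $S_\gamma^2=I$, so $P_\gamma^\pm$ are complementary projections, and that $t^{-\gamma}U_\alpha t^\gamma=e^{\gamma\omega}U_\alpha$ with $\omega(t)=\log[\alpha(t)/t]$. From these two identities a direct computation gives
\[
[U_\alpha,S_\gamma]=t^\gamma\bigl(c\,[U_\alpha,S_0]+[c,S_0]\,U_\alpha\bigr)t^{-\gamma},
\qquad c:=e^{\gamma\omega}\in SO(\mR_+).
\]
Since $[U_\alpha,S_0]$ is compact (this is the mechanism behind the $\gamma=0$ result \cite{KKL14}, where $c\equiv1$), the whole obstruction to compactness of the commutator is the term $[c,S_0]$: the coefficient $c$ is slowly oscillating and, for $\gamma\ne0$, genuinely non-constant, so $[c,S_0]$ is not compact. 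This explains why a condition is needed for $\gamma\ne0$ while none was needed for $\gamma=0$, and it identifies the core object as a singular integral operator with a slowly oscillating coefficient. Conjugating $A$ by the map $f\mapsto t^\gamma f$ (an isomorphism of $L^p(\mR_+,t^{p\Re\gamma}\,dt)$ onto $L^p(\mR_+)$, the weight lying in the Muckenhoupt class $A_p$ precisely because $0<1/p+\Re\gamma<1$) turns $A$ into $cU_\alpha P_0^++P_0^-$ on the weighted space; Fredholmness and index are preserved, so it is equivalent to study this operator.

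For Fredholmness I would use the Mellin pseudodifferential calculus. The operator is the Mellin convolution $(S_\gamma f)(t)=\frac1{\pi i}\int_0^\infty\frac{s^{-\gamma}}{s-1}f(ts)\,ds$, an explicit Mellin PDO whose symbol along the line $\Re z=1/p$ is a shifted hyperbolic cotangent, bounded exactly under $0<1/p+\Re\gamma<1$; the symbols $p_\gamma^\pm$ of $P_\gamma^\pm$ therefore trace two arcs joining their limit values $0$ and $1$ as the Mellin covariable runs through $\overline{\mR}$. By the theory of slowly oscillating shifts (cf. \cite{KKL14} and \cite{R92,RRS04}), $U_\alpha$ belongs, modulo $\cK(L^p(\mR_+))$, to the Banach algebra of Mellin PDOs with slowly oscillating symbols, its symbol being an explicit function of the datum $\omega$ and the covariable. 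Composing, $A$ is a Mellin PDO modulo compacts, and the Fredholm criterion reduces Fredholmness to the invertibility of its symbol on the fibres over the maximal ideal space of $SO(\mR_+)$ and the compactified Mellin line. A computation shows that this symbol degenerates exactly when an arc determined by $\arg c=\omega\,\Im\gamma$ and the weight exponent $1/p+\Re\gamma$ meets the origin; non-degeneracy is equivalent to \eqref{eq:main-condition}, which keeps the two relevant endpoints inside the open strip $(0,1)$. A regularizer is then furnished by the Mellin PDO with the inverse symbol.

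Once Fredholmness holds for all admissible $\gamma$, the index is computed by homotopy. The path $\gamma(s)=(1-s)\gamma$, $s\in[0,1]$, joins $\gamma$ to $0$, and along it the two quantities in \eqref{eq:main-condition} are affine in $s$, with the positive (resp. sub-unit) endpoints at $s=0$ and the value $1/p$ at $s=1$; hence both remain in $(0,1)$ throughout, so $A_{\gamma(s)}:=U_\alpha P_{\gamma(s)}^++P_{\gamma(s)}^-$ is Fredholm for every $s$. Since $\gamma\mapsto S_\gamma$ is norm-continuous on compact subsets of the strip, $s\mapsto A_{\gamma(s)}$ is a norm-continuous family of Fredholm operators, whence $\Ind A=\Ind(U_\alpha P_0^++P_0^-)=0$ by \cite[Theorem~5.2]{KKL14}. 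The main obstacle is the middle step: representing the slowly oscillating weighted shift $U_\alpha$ inside the Mellin PDO calculus and computing the symbol of $A$ precisely enough to read off that its invertibility is governed exactly by \eqref{eq:main-condition}. This requires the limit-operator and localization machinery at $0$ and $\infty$, since $c=e^{\gamma\omega}$ and the symbol of $U_\alpha$ are merely slowly oscillating rather than continuous on $\overline{\mR}_+$.
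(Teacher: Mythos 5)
Your two preliminary reductions are fine: deducing the second operator from the first via the pair $(\alpha_{-1},\overline{\gamma})$ is legitimate (your verification that \eqref{eq:main-condition} is invariant under this substitution is correct), and so is the conjugation by $t^{\gamma}$, which turns $U_\alpha P_\gamma^++P_\gamma^-$ into $cU_\alpha P_0^++P_0^-$ with $c=e^{\gamma\omega}$ on the $A_p$-weighted space. (One side claim is false, though: on the half-line $S_\gamma^2\ne I$; by Lemma~\ref{le:alg-A} the Mellin symbol of $S_\gamma^2$ is $\coth^2[\pi(x+i/p+i\gamma)]=1+r_\gamma^2(x)$, equivalently $(P_\gamma^\pm)^2=P_\gamma^\pm+\frac14R_\gamma^2$ by Lemma~\ref{le:P-pm-relations} with $\delta=\gamma$, so the $P_\gamma^\pm$ are \emph{not} complementary projections; this only damages your motivational commutator identity.) The fatal problem is the middle step, which you yourself flag as ``the main obstacle'' and then assume away: it is not true that $U_\alpha$ lies, modulo $\cK(L^p(\mR_+))$, in a Mellin PDO algebra to which a Fredholm criterion applies. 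The only candidate symbol for $U_\alpha$ is a multiple of $e^{i\omega(t)x}$, which has \emph{infinite} total variation in $x$ and no limits as $x\to\pm\infty$; it belongs to no class for which boundedness, compactness of semi-commutators, or Fredholmness results (Theorems~\ref{th:boundedness-PDO}--\ref{th:Fredholmness-PDO}) are available, nor to Rabinovich's smooth classes, since \eqref{eq:Hoermander} forces decay of $x$-derivatives. Moreover, the conclusion ``composing, $A$ is a Mellin PDO modulo compacts'' is provably false in the simplest case. Take the dilation $\alpha(t)=kt$, $k\ne 1$, so that $\omega\equiv\log k$ and your ``explicit symbol'' would be $t$-independent: then $A=U_\alpha P_\gamma^++P_\gamma^-=\Phi^{-1}\Co(b)\Phi$ with $b(x)=e^{ix\log k}p_\gamma^+(x)+p_\gamma^-(x)$, $p_\gamma^\pm=(1\pm s_\gamma)/2$. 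Since $p_\gamma^+\to 1$ exponentially as $x\to+\infty$, $b$ oscillates without limit there and $V(b)=\infty$. If $A$ were equal, modulo compacts, to $\Phi^{-1}\Co(a)\Phi$ with $a\in V(\mR)$, then $W^0(b-a)$ would be a compact convolution operator on $L^p(\mR)$; a compact operator commuting with all translations is zero, so $b=a$ --- impossible, since every $a\in V(\mR)$ has finite limits at $\pm\infty$. So no compact perturbation puts $A$ itself inside the calculus, and no localization or limit-operator device changes that.

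This is exactly the difficulty the paper's architecture is built to circumvent, and it uses the very operator your first reduction discards. The shift can enter the calculus only through products such as $U_\alpha^{\pm 1}R_\gamma$, where the exponential decay of $r_\gamma$ tames the oscillation: $\fc_{\omega,\gamma}^\pm(t,x)=e^{\pm i\omega(t)x}r_\gamma(x)\in\widetilde{\cE}(\mR_+,V(\mR))$ (Lemma~\ref{le:shift-R-gamma}). To manufacture such products out of $A$, the paper multiplies $A$ by the companion operator $A'=U_\alpha^{-1}P_{\overline{\gamma}}^++P_{\overline{\gamma}}^-$ and uses Lemma~\ref{le:P-pm-relations} for the pair $(\gamma,\overline{\gamma})$ to get $AA'\simeq A'A\simeq A_{\alpha,\gamma}=I+\frac14\big[e^{2\pi\Im\gamma}(I-U_\alpha^{-1})+e^{-2\pi\Im\gamma}(I-U_\alpha)\big]R_\gamma R_{\overline{\gamma}}$, which \emph{is} a Mellin PDO with symbol in $\widetilde{\cE}(\mR_+,V(\mR))$ (Theorem~\ref{th:similarity}). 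Fredholmness of $A_{\alpha,\gamma}$ yields regularizers for both $A$ and $A'$ simultaneously, and the anti-linear identity $\cC(U_\alpha P_\gamma^++P_\gamma^-)\cC=U_\alpha(U_\alpha^{-1}P_{\overline{\gamma}}^++P_{\overline{\gamma}}^-)$ of Lemma~\ref{le:complex-conjugation} splits $\Ind A+\Ind A'=0$ into $\Ind A=\Ind A'=0$ (Theorem~\ref{th:reduction}). Your homotopy $\gamma(s)=(1-s)\gamma$ is a sound idea in itself --- the path does stay admissible and would reduce the index computation to the $\gamma=0$ case of \cite{KKL14} --- but it cannot get started, because Fredholmness at each $s$ is precisely what the missing middle step was supposed to supply; the paper instead deforms the \emph{symbol} $\fg_{\omega,\gamma}$ to $1$ via $\omega\mapsto\theta\omega$, with condition \eqref{eq:main-condition} entering as the uniform lower bound of Lemma~\ref{le:tech}, and reads off index zero from the winding-number formula in Theorem~\ref{th:Fredholmness-PDO}.
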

%%%--------------------------------------------------------------------------------------
The rest of the paper is devoted to the proof of Theorem~\ref{th:one-shift}. In
Section~\ref{sec:reduction} we reduce the proof of Theorem~\ref{th:one-shift} to the study
of the Fredholmness and the calculation of the index of an operator $A_{\alpha,\gamma}$
involving the operators $U_\alpha$, $U_\alpha^{-1}$ and $R_\gamma$, $R_{\overline{\gamma}}$,
where
\[
(R_\gamma f)(t):=\frac{1}{\pi i}\int_{\mR_+}
\left(\frac{t}{\tau}\right)^\gamma\frac{f(\tau)}{\tau+t}d\tau,
\quad
\gamma\in\mC,\quad 0<1/p+\Re\gamma<1.
\]
This reduction is based on the well known fact that $S_\gamma$ and $R_\gamma$ with
$\gamma\in\mC$ satisfying $0<1/p+\Re\gamma<1$ are similar to certain Mellin convolution
operators. To study the operator $A_{\alpha,\gamma}$, we need a more sophisticated machinery
of Mellin pseudodifferential operators. In Section~\ref{sec:Mellin-PDO} we formulate several
results on Mellin pseudodifferential operators concerning their boundedeness, compactness,
and Fredholmness. In Section~\ref{sec:similarity} we prove that the operator
$A_{\alpha,\gamma}$ is similar to a Mellin pseudodifferential operator $\Op(\fg_{\omega,\gamma})$
with certain  explicitly given symbol $\fg_{\omega,\gamma}$. At this step the proof of
Theorem~\ref{th:one-shift} is reduced to the study of the operator $\Op(\fg_{\omega,\gamma})$.
The important point is that the Mellin pseudodifferential operator $\Op(\fg_{\omega,\gamma})$
belongs to the class $\{\Op(\fa):\fa\in\widetilde{\cE}(\mR_+,V(\mR))\}$, for which a Fredholm
criterion and an index formula are available. Here $\widetilde{\cE}(\mR_+,V(\mR))$ is the Banach
algebra of slowly oscillating symbols of limited smoothness introduced by Yuri Karlovich in
\cite{K09}. In Section~\ref{sec:Fredholmness-of-my-PDO} we show that $\Op(\fg_{\omega,\gamma})$
is Fredholm and $\Ind\Op(\fg_{\omega,\gamma})=0$. This completes the proof of
Theorem~\ref{th:one-shift} and, thus, of Theorem~\ref{th:main}.
%%%--------------------------------------------------------------------------------------
\section{Reduction}\label{sec:reduction}
In this section we introduce an operator $A_{\alpha,\gamma}$ involving the operators
$U_\alpha$, $U_\alpha^{-1}$, $R_\gamma$, and $R_{\overline{\gamma}}$. We show that if the
operator $A_{\alpha,\gamma}$ is Fredholm and its index is equal to zero, then the operators
$U_\alpha P_\gamma^++P_\gamma^-$ and $U_\alpha^{-1}P_{\overline{\gamma}}^++P_{\overline{\gamma}}^-$
are so.
%%%-----------------------------------------------------------------------------------------
\subsection{Fourier and Mellin convolution operators}
Let $\cF:L^2(\mR)\to L^2(\mR)$ denote the Fourier transform,
\[
(\cF f)(x):=\int_\mR f(y)e^{-ixy}dy,\quad x\in\mR,
\]
and let $\cF^{-1}:L^2(\mR)\to L^2(\mR)$ be the inverse of $\cF$. A function
$a\in L^\infty(\mR)$ is called a Fourier multiplier on $L^p(\mR)$
if the mapping
$f\mapsto \cF^{-1}a\cF f$ maps $L^2(\mR)\cap L^p(\mR)$ onto itself and extends
to a bounded operator on $L^p(\mR)$. The latter operator is then denoted by
$W^0(a)$. We let $\cM_p(\mR)$ stand for the set of all Fourier multipliers on
$L^p(\mR)$. One can show that $\cM_p(\mR)$ is a Banach algebra under the norm
\[
\|a\|_{\cM_p(\mR)}:=\|W^0(a)\|_{\cB(L^p(\mR))}.
\]

Let $d\mu(t)=dt/t$ be the (normalized) invariant measure on $\mR_+$.
Consider the Fourier transform on $L^2(\mathbb{R}_+,d\mu)$, which is
usually referred to as the Mellin transform and is defined by
\[
\cM:L^2(\mR_+,d\mu)\to L^2(\mR),
\quad
(\cM f)(x):=\int_{\mR_+} f(t) t^{-ix}\,\frac{dt}{t}.
\]
It is an invertible operator, with inverse given by
\[
{\cM^{-1}}:L^2(\mR)\to L^2(\mR_{+},d\mu),
\quad
({\cM^{-1}}g)(t)= \frac{1}{2\pi}\int_{\mR}
g(x)t^{ix}\,dx.
\]
Let $E$ be the isometric isomorphism
%%%
\begin{equation}\label{eq:def-E}
E:L^p(\mR_+,d\mu)\to L^p(\mR),
\quad
(Ef)(x):=f(e^x),\quad x\in\mR.
\end{equation}
%%%
Then the map
\[
A\mapsto E^{-1}AE
\]
transforms the Fourier convolution
operator $W^0(a)=\cF^{-1}a\cF$ to the Mellin convolution operator
\[
\operatorname{Co}(a):=\cM^{-1}a\cM
\]
with the same symbol $a$. Hence the class of Fourier multipliers on
$L^p(\mR)$ coincides with the class of Mellin multipliers on $L^p(\mR_+,d\mu)$.
%%%---------------------------------------------------------------------------------------
\subsection{Some operator identities}
Let $\cA$ be the smallest closed subalgebra of $\cB(L^p(\mR_+))$ that contains the operators
$I$ and $S_0$.
%%%-----------------------------------------------------------------------------------------
\begin{lemma}[{\cite[Lemma~2.7]{KKL14}}]
\label{le:compactness-commutators}
If $\alpha\in SOS(\mR_+)$, $A\in\cA$, then $U_\alpha^{\pm 1} A\simeq AU_\alpha^{\pm 1}$.
\end{lemma}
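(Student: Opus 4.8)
The plan is to reduce the statement, via the algebraic structure of $\cA$, to a single compactness assertion about the generator $S_0$. First I would set
\[
\cC:=\{A\in\cB(L^p(\mR_+)):U_\alpha A\simeq AU_\alpha\ \text{and}\ U_\alpha^{-1}A\simeq AU_\alpha^{-1}\}
\]
and check that $\cC$ is a closed subalgebra of $\cB(L^p(\mR_+))$ containing $I$. Linearity and $I\in\cC$ are clear; closedness in the operator norm follows from the continuity of $A\mapsto U_\alpha^{\pm1}A-AU_\alpha^{\pm1}$ together with the closedness of $\cK(L^p(\mR_+))$; and multiplicative closedness is the Leibniz identity
\[
U_\alpha^{\pm1}AB-ABU_\alpha^{\pm1}=(U_\alpha^{\pm1}A-AU_\alpha^{\pm1})B+A(U_\alpha^{\pm1}B-BU_\alpha^{\pm1}),
\]
whose right-hand side is compact whenever $A,B\in\cC$. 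Since $\cA$ is the smallest closed subalgebra containing $I$ and $S_0$, it then suffices to prove $S_0\in\cC$. Moreover, because $U_\alpha^{-1}=U_{\alpha_{-1}}$ with $\alpha_{-1}\in SOS(\mR_+)$, the relation $U_\alpha^{-1}S_0\simeq S_0U_\alpha^{-1}$ is just $U_\beta S_0\simeq S_0U_\beta$ for $\beta=\alpha_{-1}$; hence everything reduces to proving
\[
U_\alpha S_0\simeq S_0U_\alpha\qquad\text{for an arbitrary }\alpha\in SOS(\mR_+).
\]

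Second, I would realize the commutator $U_\alpha S_0-S_0U_\alpha$ as an integral operator. A direct computation of $U_\alpha S_0$ together with the change of variables $\tau\mapsto\alpha(\tau)$ in $S_0U_\alpha$ (using $\alpha_{-1}'=1/(\alpha'\circ\alpha_{-1})$) shows that its kernel is
\[
k(t,\tau)=\frac{1}{\pi i}\left[\frac{\alpha'(t)^{1/p}}{\tau-\alpha(t)}-\frac{\alpha'(\alpha_{-1}(\tau))^{-1/q}}{\alpha_{-1}(\tau)-t}\right],\qquad \frac1p+\frac1q=1.
\]
Both summands are singular exactly on the curve $\{\tau=\alpha(t)\}$, and expanding near it (with $\alpha_{-1}'(\alpha(t))=1/\alpha'(t)$) shows that the two principal $1/(\tau-\alpha(t))$ singularities have equal residue and cancel, so that $k(t,\tau)$ is no longer singular on the diagonal.

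Third, I would prove that this operator is compact by conjugating it to $L^p(\mR)$ through the standard isometries relating $L^p(\mR_+)$, $L^p(\mR_+,d\mu)$ and $L^p(\mR)$ from Section~\ref{sec:reduction}. Under $t=e^x$ the operator $S_0$ turns into a Fourier convolution operator $W^0(\fs)$ with a continuous symbol $\fs$, while $U_\alpha$ turns into the isometric weighted shift $g\mapsto(\widetilde\alpha')^{1/p}(g\circ\widetilde\alpha)$ on $L^p(\mR)$ generated by $\widetilde\alpha(x)=\log\alpha(e^x)=x+\omega(e^x)$, whose increment $\omega(e^x)=\log[\alpha(e^x)/e^x]$ is bounded and slowly oscillating. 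A translation by a constant commutes exactly with every convolution operator, so the commutator is controlled by the variation of the increment; the hypothesis $\alpha'\in SO(\mR_+)$ forces $\widetilde\alpha'(x)\to1$ at $\pm\infty$ in the slowly oscillating sense, which is precisely what makes the commutator of $W^0(\fs)$ with this shift compact.

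I expect the third step to be the main obstacle. The cancellation on the diagonal is a routine if delicate computation, but upgrading ``the kernel is no longer singular and varies slowly'' to genuine compactness on the non-compact half-line requires control of the kernel at both endpoints $0$ and $\infty$. This is exactly where the assumption $\alpha'\in SO(\mR_+)$ enters (as opposed to mere boundedness of $\log\alpha'$), and it is the reason the lemma is stated for slowly oscillating shifts.
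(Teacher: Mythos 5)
Your first step is sound, and it is the same opening move any proof would make: the set of all $A$ with $U_\alpha^{\pm1}A\simeq AU_\alpha^{\pm1}$ is a closed unital subalgebra of $\cB(L^p(\mR_+))$, and since $U_\alpha^{-1}=U_{\alpha_{-1}}$ with $\alpha_{-1}\in SOS(\mR_+)$, everything reduces to showing $U_\alpha S_0\simeq S_0U_\alpha$ for a single shift. Your kernel formula in the second step is also correct. The genuine gap is the third step: it is not an argument but a restatement of the lemma. The sentence asserting that slow oscillation ``is precisely what makes the commutator of $W^0(\fs)$ with this shift compact'' is exactly the claim to be proved, and no mechanism for proving it is offered. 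Note also that the present paper does not prove this lemma at all; it quotes it from \cite[Lemma~2.7]{KKL14}, and the proof there (going back to \cite{KKL11a} and ultimately to the calculus of \cite{K06,K09}) runs through the Mellin pseudodifferential machinery reproduced in Section~\ref{sec:Mellin-PDO}, not through kernel estimates.

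Two concrete obstacles show why your outline cannot be completed as written. Locally: $\alpha'\in SO(\mR_+)$ is mere continuity plus a qualitative oscillation condition, with no modulus of continuity; after the principal singularities cancel, the remaining kernel is only $o\big(1/|\tau-\alpha(t)|\big)$, dominated by $\omega_{\alpha'}\big(|\tau-\alpha(t)|\big)/|\tau-\alpha(t)|$ with $\omega_{\alpha'}$ the modulus of continuity of $\alpha'$, and without a Dini-type condition this need not even be locally integrable. So the classical ``weakly singular kernel $\Rightarrow$ compact'' argument, which is what your step two implicitly relies on and which does work for Dini-smooth diffeomorphisms of compact curves, is unavailable here. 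Globally: on the non-compact set $\mR_+$ a harmless kernel gives no compactness whatsoever --- every nonconstant Mellin convolution $\Co(a)$ has a perfectly smooth, nonsingular kernel and is never compact --- so ``the kernel is nice and varies slowly'' cannot be upgraded to compactness by any soft argument. The proof that actually works converts slow oscillation into compactness quantitatively: one cannot insert $U_\alpha$ alone into the symbol calculus (its formal symbol $e^{i\omega(t)x}$ has infinite total variation in $x$), so one realizes the products $U_\alpha R_0$, $R_0U_\alpha$ as Mellin PDOs with symbols $e^{i\omega(t)x}r_0(x)$ in $\widetilde{\cE}(\mR_+,V(\mR))$ (as in Lemmas~\ref{le:shift-R-gamma-exact}--\ref{le:shift-R-gamma}), commutes them using Theorem~\ref{th:comp-semi-commutators-PDO}, and disposes of the differences via Theorem~\ref{th:compactness-PDO}; the latter requires the nontrivial fact that $t\omega'(t)\to0$ as $t\to0$ and $t\to\infty$ for every $\alpha\in SOS(\mR_+)$, which you nowhere establish (your remark that $\widetilde\alpha'\to1$ ``in the slowly oscillating sense'' is not this statement). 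Finally, $S_0$ itself cannot be fed into this scheme directly, because its Mellin symbol $s_0$ does not vanish at $\pm\infty$; the non-decaying part must be split off and treated by a separate argument. None of these ingredients appears in your proposal, so the analytic heart of the lemma is missing.
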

%%%-----------------------------------------------------------------------------------------
Consider the isometric isomorphism
\[
\Phi:L^p(\mR_+)\to L^p(\mR_+,d\mu),
\quad
(\Phi f)(t):=t^{1/p}f(t),\quad t\in\mR_+.
\]
The following two statements are well known and go back to Duduchava \cite{D79,D87}  (see also
\cite[Section~1.10.2]{DS08},
\cite[Section~2.1.2]{HRS94}, and
\cite[Sections~4.2.2--4.2.3]{RSS11}).
%%%----------------------------------------------------------------------------------------
\begin{lemma}\label{le:alg-A-commutative}
The algebra $\cA$ is commutative.
\end{lemma}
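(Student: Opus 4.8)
The plan is to exploit the fact that, up to closure, $\cA$ is generated by the single operator $S_0$. First I would identify the smallest (not necessarily closed) subalgebra $\cA_0$ of $\cB(L^p(\mR_+))$ containing $I$ and $S_0$. A subalgebra is closed under operator multiplication and under finite linear combinations, and the only nonscalar generator available is $S_0$; hence products of the generators produce only the powers $S_0^k$, and $\cA_0$ consists precisely of the polynomials in $S_0$:
\[
\cA_0 = \Big\{\, c_0 I + c_1 S_0 + \dots + c_n S_0^n \ :\ n\ge 0,\ c_0,\dots,c_n\in\mC \,\Big\},
\]
with the convention $S_0^0 = I$. By construction $\cA$ is the closure of $\cA_0$ in the operator norm of $\cB(L^p(\mR_+))$.

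Second, I would observe that $\cA_0$ is commutative. Indeed, any two elements of $\cA_0$ are polynomials $p(S_0)$ and $q(S_0)$ in one and the same operator $S_0$, and powers of a fixed operator commute with one another, so $p(S_0)\,q(S_0) = q(S_0)\,p(S_0)$. Thus the commutator of any two elements of $\cA_0$ vanishes.

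The final, and only genuinely analytic, step is to pass from commutativity of $\cA_0$ to commutativity of its closure $\cA$, which I would carry out using the joint continuity of multiplication in a Banach algebra. Given $A,B\in\cA$, choose sequences $A_n,B_n\in\cA_0$ with $A_n\to A$ and $B_n\to B$. From the estimate
\[
\|A_nB_n - AB\| \le \|A_n\|\,\|B_n - B\| + \|A_n - A\|\,\|B\| \longrightarrow 0,
\]
together with the analogous estimate for $B_nA_n$ and the identity $A_nB_n = B_nA_n$, I would conclude that the commutator $AB-BA$ is the norm limit of the zero commutators $A_nB_n-B_nA_n$, whence $AB=BA$. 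Therefore $\cA$ is commutative.

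I do not expect a serious obstacle here: the assertion is entirely soft, resting only on the single-generation of $\cA$ over $\mC I$ and on the continuity of multiplication, so the closure step is the sole ingredient requiring any argument. (A more structural alternative would invoke the similarity of $S_0$ to a Mellin convolution operator $\Co(s_0)$, under which $\cA$ is carried onto a commutative algebra of Mellin convolutions; but this machinery is unnecessary for mere commutativity and would become relevant only if one wished to describe the maximal ideal space of $\cA$ explicitly.)
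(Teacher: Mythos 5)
Your proof is correct, but it follows a genuinely different route from the paper, which in fact gives no proof of this lemma at all: the statement is presented there as ``well known'', going back to Duduchava, with references to textbook treatments. The argument implicit in those references is the structural one you mention only parenthetically: by Lemma~\ref{le:alg-A}, $S_0=\Phi^{-1}\Co(s_0)\Phi$, so the similarity $A\mapsto\Phi A\Phi^{-1}$ carries $\cA$ onto the smallest closed subalgebra of $\cB(L^p(\mR_+,d\mu))$ containing $I$ and $\Co(s_0)$, an algebra of Mellin convolution operators, and these commute because $\Co(a)\Co(b)=\Co(ab)=\Co(ba)=\Co(b)\Co(a)$. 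Your route is softer and self-contained: $\cA$ is the norm closure of the algebra $\cA_0$ of polynomials in the single generator $S_0$; polynomials in one fixed operator commute; and commutativity passes to the closure by joint continuity of multiplication --- your estimate $\|A_nB_n-AB\|\le\|A_n\|\,\|B_n-B\|+\|A_n-A\|\,\|B\|$ is the right one, noting that $\sup_n\|A_n\|<\infty$ since $A_n$ converges. What your argument buys is complete independence from the harmonic analysis of $S_0$: only boundedness is used, so the same proof shows that the closed unital algebra generated by any single bounded operator is commutative. What the convolution representation buys is much more than commutativity: it yields the symbol calculus (the identification of $S_\gamma$ and $R_\gamma$ with Mellin multipliers) that the paper needs anyway, for instance in the proof of Lemma~\ref{le:P-pm-relations}, which is why the cited literature proceeds that way. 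Either way, the lemma is established, and your proof is a valid, more elementary substitute.
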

%%%----------------------------------------------------------------------------------------
\begin{lemma}\label{le:alg-A}
Let $1<p<\infty$ and $\gamma\in\mC$ be such that $0<1/p+\Re\gamma<1$.
The functions $s_\gamma$ and $r_\gamma$, given for $x\in\mR$ by
\[
s_\gamma(x):=\coth[\pi(x+i/p+i\gamma)],
\quad
r_\gamma(x):=1/\sinh[\pi(x+i/p+i\gamma)],
\]
belong to $\cM_p(\mR)$; the operators $S_\gamma$ and $R_\gamma$ belong to $\cA$; and
\[
S_\gamma=\Phi^{-1}\Co(s_\gamma)\Phi,
\quad
R_\gamma=\Phi^{-1}\Co(r_\gamma)\Phi.
\]
\end{lemma}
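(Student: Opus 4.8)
The plan is to transport everything to the group $\mR_+$ equipped with its invariant measure $d\mu$, where $S_\gamma$ and $R_\gamma$ become Mellin convolution operators whose symbols can be written down explicitly. Concretely, I would first conjugate by the isometric isomorphism $\Phi$ and show that $\Phi S_\gamma\Phi^{-1}$ and $\Phi R_\gamma\Phi^{-1}$ are convolution operators on $L^p(\mR_+,d\mu)$ of the form $g\mapsto\int_{\mR_+}k(t/\tau)g(\tau)\,d\mu(\tau)$, and then recall the Mellin convolution theorem: such an operator equals $\Co(\cM k)$, that is, its symbol is the Mellin transform of its kernel. Thus the whole lemma reduces to evaluating two explicit Mellin transforms and recognizing the outcome as $s_\gamma$ and $r_\gamma$.

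For the computation itself, writing $(\Phi^{-1}g)(\tau)=\tau^{-1/p}g(\tau)$ and $(\Phi h)(t)=t^{1/p}h(t)$ and collecting the powers of $t$ and $\tau$ gives
\[
(\Phi S_\gamma\Phi^{-1}g)(t)=\frac{1}{\pi i}\int_{\mR_+}\frac{(t/\tau)^{\gamma+1/p}}{1-t/\tau}\,g(\tau)\,d\mu(\tau),
\]
and the same computation with $\tau+t$ in place of $\tau-t$ for $R_\gamma$. Hence the kernels are $k_S(u)=\frac{1}{\pi i}\,\frac{u^{\gamma+1/p}}{1-u}$ (a principal-value kernel) and $k_R(u)=\frac{1}{\pi i}\,\frac{u^{\gamma+1/p}}{1+u}$. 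Applying $\cM$ and writing $z=\gamma+1/p-ix$, the symbols become the classical integrals
\[
s_\gamma(x)=\frac{1}{\pi i}\int_{\mR_+}\frac{u^{z-1}}{1-u}\,du,\qquad r_\gamma(x)=\frac{1}{\pi i}\int_{\mR_+}\frac{u^{z-1}}{1+u}\,du,
\]
the first understood as a principal value. The hypothesis $0<1/p+\Re\gamma<1$ is exactly the condition $0<\Re z<1$ under which both integrals converge; they evaluate to $\pi\cot(\pi z)$ and to $\pi/\sin(\pi z)=\Gamma(z)\Gamma(1-z)$ respectively. Using $\cot\theta=i\coth(i\theta)$ and $\sin\theta=-i\sinh(i\theta)$ together with $i\pi z=\pi(x+i/p+i\gamma)$ turns these into precisely $s_\gamma(x)=\coth[\pi(x+i/p+i\gamma)]$ and $r_\gamma(x)=1/\sinh[\pi(x+i/p+i\gamma)]$, which is the asserted diagonalization $S_\gamma=\Phi^{-1}\Co(s_\gamma)\Phi$ and $R_\gamma=\Phi^{-1}\Co(r_\gamma)\Phi$ (established first on a dense class of well-behaved $g$, then extended by continuity).

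It remains to justify the two membership claims. For $s_\gamma,r_\gamma\in\cM_p(\mR)$ I would argue by transport rather than by invoking a multiplier theorem: since $S_\gamma$ is bounded on $L^p(\mR_+)$ (and $R_\gamma$ likewise, by the same references) and $\Phi$ is an isometric isomorphism onto $L^p(\mR_+,d\mu)$, the operators $\Co(s_\gamma)=\Phi S_\gamma\Phi^{-1}$ and $\Co(r_\gamma)=\Phi R_\gamma\Phi^{-1}$ are bounded on $L^p(\mR_+,d\mu)$; transporting by $E$ shows $W^0(s_\gamma)$ and $W^0(r_\gamma)$ are bounded on $L^p(\mR)$, that is, $s_\gamma,r_\gamma\in\cM_p(\mR)$. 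For $S_\gamma,R_\gamma\in\cA$, I would use that $\Phi$ and $E$ carry $\cA$ isomorphically onto the closed (commutative, by Lemma~\ref{le:alg-A-commutative}) subalgebra of $\cM_p(\mR)$ generated by $1$ and $s_0$, and then place $s_\gamma$ and $r_\gamma$ inside it through the explicit algebraic relations among these symbols: the addition formula for $\coth$ yields the M\"obius relation $s_\gamma=(c\,s_0+1)/(s_0+c)$ with the constant $c=-i\cot(\pi\gamma)$, while the identity $\coth^2 w-1=1/\sinh^2 w$ yields $s_\gamma^2-r_\gamma^2=1$.

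The main obstacle is precisely this last membership in $\cA$. Unlike the diagonalization, it is not a computation but a structural statement about the closed subalgebra generated by $s_0$, and it hinges on inverse-closedness of that subalgebra inside $\cM_p(\mR)$ — equivalently, on the resolvent $(s_0+c)^{-1}$ and on the appropriate square root $\sqrt{s_\gamma^2-1}$ lying in the norm-closure of the polynomials in $s_0$. This is exactly the delicate point settled in Duduchava's work \cite{D79,D87}, on which I would rely; everything else is the explicit Mellin-transform calculation carried out above.
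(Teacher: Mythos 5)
The paper does not actually prove this lemma: it is stated as ``well known'' with references to Duduchava \cite{D79,D87} and the textbook accounts \cite{DS08,HRS94,RSS11}. So your proposal cannot be compared with a proof in the paper; what it does is reconstruct the standard argument behind those citations, and the parts you argue in detail are correct. The conjugation computation producing the kernels $\frac{1}{\pi i}\,u^{\gamma+1/p}/(1\mp u)$ is right; the two classical integrals
$\mathrm{p.v.}\int_0^\infty u^{z-1}(1-u)^{-1}\,du=\pi\cot(\pi z)$ and
$\int_0^\infty u^{z-1}(1+u)^{-1}\,du=\pi/\sin(\pi z)$
converge exactly under $0<\Re z<1$, i.e.\ under the hypothesis $0<1/p+\Re\gamma<1$; and the hyperbolic rewriting via $i\pi z=\pi(x+i/p+i\gamma)$ gives precisely $s_\gamma$ and $r_\gamma$. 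Indeed, this is the same computation the paper itself performs inside the proof of Lemma~\ref{le:shift-R-gamma-exact}, where \cite[formula 6.2.6]{EMOT54}, \cite[formula 3.194.4]{GR07} are invoked for the kernel $1/(1+kt)$; your calculation is the case $k=1$ plus the principal-value analogue. The transport argument for $s_\gamma,r_\gamma\in\cM_p(\mR)$ is also sound given the cited $L^p$-boundedness of $S_\gamma$ and $R_\gamma$ (alternatively, one can observe $s_\gamma,r_\gamma\in V(\mR)$ and invoke Stechkin's multiplier theorem, which makes this step self-contained).

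One caveat concerns the membership $S_\gamma,R_\gamma\in\cA$, which you correctly single out as the delicate point. Your M\"obius relation $s_\gamma=(c\,s_0+1)/(s_0+c)$ with $c=\coth(i\pi\gamma)=-i\cot(\pi\gamma)$ is a genuine reduction: it reduces $S_\gamma\in\cA$ to invertibility of $S_0+cI$ within $\cA$ (inverse-closedness). But for $R_\gamma$ the phrase ``the appropriate square root $\sqrt{s_\gamma^2-1}$ lies in the closure of the polynomials in $s_0$'' is not a reduction at all --- it is a restatement of the claim $r_\gamma\in\operatorname{alg}(1,s_0)$, and no abstract square-root extraction can settle it, since $s_\gamma^2-1=r_\gamma^2$ vanishes at $x=\pm\infty$, so $0$ lies in its spectrum and holomorphic functional calculus is unavailable. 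For $p=2$ the gap closes easily: $\cA$ is then a $C^*$-algebra, $s_0(x)=\tanh(\pi x)$, $r_0(x)=-i\operatorname{sech}(\pi x)=-i\sqrt{1-s_0(x)^2}$, and continuous functional calculus applies; for general $p$ this is exactly where Duduchava's description of $\cA$ is indispensable. Since you defer precisely that point to \cite{D79,D87} --- the very sources the paper cites for the whole lemma --- your proposal is acceptable as written, and in fact records more of the argument than the paper does; just be aware that the $R_\gamma$ part of the $\cA$-membership is cited, not proved, in your write-up.
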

%%%---------------------------------------------------------------------------------------
The above lemma is a source for various relations involving operators $S_\gamma$ and $R_\gamma$.
We will need the following two identities, which we were unable to find in the literature
(although similar results and/or techniques were used, for instance, in \cite{D87}, \cite[p.~50]{HRS94}).
%%%---------------------------------------------------------------------------------------
\begin{lemma}\label{le:P-pm-relations}
Let $1<p<\infty$ and $\gamma,\delta\in\mC$ be such that $0<1/p+\Re\gamma<1$ and
$0<1/p+\Re\delta<1$. Then
%%%
\begin{equation}\label{eq:P-pm-relations-1}
P_\gamma^\pm P_\delta^\pm=
\frac{1}{2}P_\gamma^\pm+\frac{1}{2}P_\delta^\pm+
\frac{\cos[\pi(\gamma-\delta)]}{4}R_\gamma R_\delta,
\quad
P_\gamma^- P_\delta^+ =-\frac{e^{i\pi(\delta-\gamma)}}{4}R_\gamma R_\delta.
\end{equation}
\end{lemma}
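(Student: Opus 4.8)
The plan is to conjugate the whole identity by the isometric isomorphism $\Phi$ and thereby replace the operators by their Mellin symbols, turning each operator identity into an elementary identity between hyperbolic functions. By Lemma~\ref{le:alg-A} we have $\Phi S_\gamma\Phi^{-1}=\Co(s_\gamma)$ and $\Phi R_\gamma\Phi^{-1}=\Co(r_\gamma)$ (and likewise with $\delta$), so that
\[
\Phi P_\gamma^\pm\Phi^{-1}=\Co\Bigl(\tfrac{1\pm s_\gamma}{2}\Bigr),
\qquad
\Phi P_\delta^\pm\Phi^{-1}=\Co\Bigl(\tfrac{1\pm s_\delta}{2}\Bigr).
\]
Since $\Co(a)\Co(b)=\cM^{-1}a\,\cM\,\cM^{-1}b\,\cM=\Co(ab)$, the map $a\mapsto\Co(a)$ is multiplicative, and $\cM_p(\mR)$ is a Banach algebra, so every symbol product occurring below is again a multiplier. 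Hence, after applying $\Phi(\cdot)\Phi^{-1}$, each asserted operator identity is equivalent to the corresponding pointwise identity between symbols, and it suffices to verify the latter.

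First I would fix the notation, for $x\in\mR$,
\[
u:=\pi(x+i/p+i\gamma),
\qquad
v:=\pi(x+i/p+i\delta),
\]
so that $s_\gamma=\coth u$, $r_\gamma=1/\sinh u$, $s_\delta=\coth v$, $r_\delta=1/\sinh v$, and the symbols of $P_\gamma^\pm$ and $P_\delta^\pm$ are $(1\pm\coth u)/2$ and $(1\pm\coth v)/2$. The only bookkeeping that requires attention is the parameter relation $u-v=i\pi(\gamma-\delta)$, which gives
\[
\cosh(u-v)=\cos[\pi(\gamma-\delta)],
\qquad
e^{\,v-u}=e^{i\pi(\delta-\gamma)}
\]
and accounts for the two scalar factors in the statement.

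For the first identity (both signs treated simultaneously), multiplying out the symbols reduces the claim to
\[
\coth u\,\coth v-1=\frac{\cosh(u-v)}{\sinh u\,\sinh v},
\]
which is immediate from $\cosh u\cosh v-\sinh u\sinh v=\cosh(u-v)$. For the mixed identity I would use the factorizations $1-\coth u=-e^{-u}/\sinh u$ and $1+\coth v=e^{v}/\sinh v$ (each a consequence of $\sinh w\pm\cosh w=\pm e^{\pm w}$), so that the symbol of $P_\gamma^-P_\delta^+$ becomes
\[
\frac{(1-\coth u)(1+\coth v)}{4}
=-\frac{e^{\,v-u}}{4\,\sinh u\,\sinh v}
=-\frac{e^{i\pi(\delta-\gamma)}}{4}\,r_\gamma r_\delta,
\]
which is exactly the symbol of the right-hand side.

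I do not expect a genuine obstacle here: the computation is elementary once the substitution $u-v=i\pi(\gamma-\delta)$ is kept straight, and the only structural point---that the products $R_\gamma R_\delta$ and $P_\gamma^\pm P_\delta^\pm$ correspond to honest Mellin multipliers---is guaranteed by Lemma~\ref{le:alg-A} together with the Banach-algebra structure of $\cM_p(\mR)$. Applying $\Phi^{-1}(\cdot)\Phi$ and the multiplicativity of $\Co$ to the three verified scalar identities then returns the two operator identities of the lemma.
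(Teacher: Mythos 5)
Your proposal is correct and takes essentially the same route as the paper: both pass to the Mellin symbols via Lemma~\ref{le:alg-A}, verify the identities pointwise by elementary hyperbolic-function algebra, and then return to the operator level using the multiplicativity of $a\mapsto\Co(a)$. The only cosmetic difference is in the mixed identity, where you use the factorization $1\mp\coth w=\mp e^{\mp w}/\sinh w$ directly, while the paper computes $s_\gamma s_\delta-1$ and $s_\gamma-s_\delta$ separately and recombines them into the factor $e^{i\pi(\delta-\gamma)}$.
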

%%%---------------------------------------------------------------------------------------
\begin{proof}
Let $x\in\mR$. Put
\[
x_\gamma:=\pi(x+i/p+i\gamma),
\quad
x_\delta:=\pi(x+i/p+i\delta).
\]
It is easy to see that
%%%
\begin{align}
s_\gamma(x)s_\delta(x)-1
&=
\frac{\cosh(x_\gamma)\cosh(x_\delta)-\sinh(x_\gamma)\sinh(x_\delta)}
{\sinh(x_\gamma)\sinh(x_\delta)}
=
\frac{\cosh(x_\gamma-x_\delta)}{\sinh(x_\gamma)\sinh(x_\delta)}
\nonumber\\
&=
\cosh[i\pi(\gamma-\delta)]r_\gamma(x)r_\delta(x)
=
\cos[\pi(\gamma-\delta)]r_\gamma(x)r_\delta(x)
\label{eq:P-pm-relations-2}
\end{align}
%%%
and
%%%
\begin{align}
s_\gamma(x)-s_\delta(x)
&=
\frac{\cosh(x_\gamma)\sinh(x_\delta)-\cosh(x_\delta)\sinh(x_\gamma)}{\sinh(x_\gamma)\sinh(x_\delta)}
=
\frac{\sinh(x_\delta-x_\gamma)}{\sinh(x_\gamma)\sinh(x_\delta)}
\nonumber\\
&=
\sinh[i\pi(\delta-\gamma)]r_\gamma(x)r_\delta(x)
=
-i\sin[\pi(\gamma-\delta)]r_\gamma(x)r_\delta(x).
\label{eq:P-pm-relations-3}
\end{align}
%%%
Put
\begin{equation}\label{eq:P-pm-relations-4}
p_\gamma^\pm (x):=(1\pm s_\gamma(x))/2,
\quad
p_\delta^\pm(x):=(1\pm s_\delta(x))/2.
\end{equation}
%%%
Taking into account \eqref{eq:P-pm-relations-2}, we obtain
%%%
\begin{align}
p_\gamma^\pm(x)p_\delta^\pm(x)
&=
\frac{1}{4}(1\pm s_\gamma(x)\pm s_\delta(x)+s_\gamma(x)s_\delta(x))
\nonumber\\
&=
\frac{1}{2}p_\gamma^\pm(x)+\frac{1}{2}p_\delta^\pm(x)+\frac{1}{4}(s_\gamma(x)s_\delta(x)-1)
\nonumber\\
&=
\frac{1}{2}p_\gamma^\pm(x)+\frac{1}{2}p_\delta^\pm(x)+\frac{\cos[\pi(\gamma-\delta)]}{4}r_\gamma(x)r_\delta(x).
\label{eq:P-pm-relations-5}
\end{align}
%%%
From \eqref{eq:P-pm-relations-2}--\eqref{eq:P-pm-relations-3} we get
%%%
\begin{align}
p_\gamma^-(x)p_\delta^+(x)
&=
\frac{1}{4}(1+s_\delta(x)-s_\gamma(x)-s_\gamma(x)s_\delta(x))
\nonumber\\
&=
-\frac{1}{4}(s_\gamma(x)-s_\delta(x))-\frac{1}{4}(s_\gamma(x)s_\delta(x)-1)
\nonumber\\
&=
-\frac{1}{4}(-i\sin[\pi(\gamma-\delta)]r_\gamma(x)r_\delta(x))-\frac{\cos[\pi(\gamma-\delta)]}{4}r_\gamma(x)r_\delta(x)
\nonumber\\
&=
-\frac{1}{4}(\cos[\pi(\delta-\gamma)]+i\sin[\pi(\delta-\gamma)])r_\gamma(x)r_\delta(x)
\nonumber\\
&=
-\frac{e^{i\pi(\delta-\gamma)}}{4}r_\gamma(x)r_\delta(x).
\label{eq:P-pm-relations-6}
\end{align}
%%%
Combining identities \eqref{eq:P-pm-relations-4}--\eqref{eq:P-pm-relations-6} with
Lemma~\ref{le:alg-A}, we arrive at \eqref{eq:P-pm-relations-1}.
\end{proof}
%%%---------------------------------------------------------------------------------------
As it is pointed out by the anonymous referee, the results of the above lemma can also be derived
by the completely different techniques of Duduchava, Kvergeli\-dze, Tsaava \cite[Theorem~2.2]{DKT13}
based on the Poincare-Beltrami and Muskhelishvili formulas.
%%%---------------------------------------------------------------------------------------
\begin{lemma}\label{le:complex-conjugation}
Let $1<p<\infty$ and $\cC$ be the operator of complex conjugation given on $L^p(\mR_+)$ by
\[
(\cC f)(t):=\overline{f(t)}, \quad t\in\mR_+.
\]
If $\alpha\in SOS(\mR_+)$ and $\gamma\in\mC$ is such that $0<1/p+\Re\gamma<1$, then
\[
\cC U_\alpha\cC=U_\alpha,
\quad
\cC S_\gamma \cC=-S_{\overline{\gamma}},
\quad
\cC P_\gamma^+\cC = P_{\overline{\gamma}}^-,
\quad
\cC P_\gamma^-\cC = P_{\overline{\gamma}}^+.
\]
\end{lemma}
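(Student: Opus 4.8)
The plan is to verify each of the four identities by pushing the antilinear operator $\cC$ directly through the defining formulas of $U_\alpha$, $S_\gamma$, and $P_\gamma^\pm$. Throughout I will use that $\cC$ is additive with $\cC^2=I$, that $\cC$ acts trivially on real scalars (so $\overline{1/2}=1/2$ and, more generally, $\cC$ commutes with multiplication by any real-valued function), and that conjugating an imaginary scalar produces a sign change.

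First I would establish $\cC U_\alpha\cC=U_\alpha$. Since $\alpha$ is an orientation-preserving diffeomorphism of $\mR_+$, its derivative $\alpha'$ is positive, so $(\alpha')^{1/p}$ is a real-valued function and is therefore fixed by conjugation. A direct computation then gives $(\cC U_\alpha\cC f)(t)=\overline{(\alpha'(t))^{1/p}\,\overline{f(\alpha(t))}}=(\alpha'(t))^{1/p}f(\alpha(t))=(U_\alpha f)(t)$, which is the first claim.

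The substantial computation is $\cC S_\gamma\cC=-S_{\overline{\gamma}}$. Here I would conjugate the principal-value integral defining $S_\gamma$, using that complex conjugation commutes with the principal-value limit. The two crucial observations are $\overline{1/(\pi i)}=-1/(\pi i)$, which supplies the global minus sign, and $\overline{(t/\tau)^\gamma}=(t/\tau)^{\overline{\gamma}}$, which holds because $t/\tau>0$ makes $(t/\tau)^\gamma=e^{\gamma\log(t/\tau)}$ with real exponent $\log(t/\tau)$; the factor $\tau-t$ is real and the double conjugate returns $f(\tau)$. Assembling these, $(\cC S_\gamma\cC f)(t)=-\frac{1}{\pi i}\int_{\mR_+}(t/\tau)^{\overline{\gamma}}\frac{f(\tau)}{\tau-t}\,d\tau=-(S_{\overline{\gamma}}f)(t)$, as asserted.

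Finally, the two projection identities follow formally from the previous two. Writing $P_\gamma^\pm=(I\pm S_\gamma)/2$ and invoking additivity of $\cC$ together with $\cC^2=I$ and $\overline{1/2}=1/2$, I would obtain $\cC P_\gamma^\pm\cC=\frac{1}{2}(I\pm\cC S_\gamma\cC)=\frac{1}{2}(I\mp S_{\overline{\gamma}})=P_{\overline{\gamma}}^\mp$, which yields the last two claims simultaneously. The only points requiring care — and the closest thing to an obstacle in an otherwise routine argument — are the antilinearity of $\cC$, so that scalars must be conjugated rather than pulled out unchanged, and the justification that conjugation passes through the singular integral; both are elementary.
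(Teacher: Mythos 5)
Your proposal is correct and follows essentially the same route as the paper: the first identity by direct computation using positivity of $\alpha'$, the second from the two key observations $\overline{1/(\pi i)}=-1/(\pi i)$ and $\overline{(t/\tau)^{\gamma}}=(t/\tau)^{\overline{\gamma}}$, and the projection identities as immediate corollaries via antilinearity of $\cC$ and $\cC^2=I$. You merely spell out details (reality of $(\alpha')^{1/p}$, conjugation passing through the principal-value limit) that the paper leaves implicit.
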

%%%--------------------------------------------------------------------------------------
\begin{proof}
The first identity is trivial. The second equality follows from $\overline{1/(\pi i)}=-1/(\pi i)$
and $\overline{(t/\tau)^\gamma}=(t/\tau)^{\overline{\gamma}}$, where $t,\tau\in\mR_+$. The
remaining identities are immediate corollaries of the second equality.
\end{proof}
%%%--------------------------------------------------------------------------------------
\subsection{Starting the proof of Theorem~\ref{th:one-shift}}
Now we are in a position to prove the main result of this section.
%%%--------------------------------------------------------------------------------------
\begin{theorem}\label{th:reduction}
Suppose $1<p<\infty$ and $\gamma\in\mC$ is such that $0<1/p+\Re\gamma<1$ and $\alpha\in SOS(\mR_+)$.
If the operator
%%%
\begin{equation}\label{eq:operator-A}
A_{\alpha,\gamma}:=I+
\frac{1}{4}\big[e^{2\pi\Im\gamma}(I-U_\alpha^{-1})+
e^{-2\pi\Im\gamma}(I-U_\alpha)\big]
R_\gamma R_{\overline{\gamma}}.
\end{equation}
%%%
is Fredholm on the space $L^p(\mR_+)$ and $\Ind A_{\alpha,\gamma}=0$,
then the operators $U_\alpha P_\gamma^++P_\gamma^-$ and
$U_\alpha^{-1}P_{\overline{\gamma}}^++P_{\overline{\gamma}}^-$
are Fredholm on the space $L^p(\mR_+)$ and
%%%
\begin{equation}\label{eq:reduction-0}
\Ind(U_\alpha P_\gamma^++P_\gamma^-)
=
\Ind(U_\alpha^{-1}P_{\overline{\gamma}}^++P_{\overline{\gamma}}^-)
=0.
\end{equation}
\end{theorem}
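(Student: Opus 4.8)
The plan is to establish the claimed Fredholmness and index-zero conclusions for the two operators $U_\alpha P_\gamma^++P_\gamma^-$ and $U_\alpha^{-1}P_{\overline\gamma}^++P_{\overline\gamma}^-$ by relating them to the single operator $A_{\alpha,\gamma}$ of \eqref{eq:operator-A}. The natural device is the following: form the product of the two operators (in a suitable order) and show, using the algebraic identities of Lemma~\ref{le:P-pm-relations} together with the commutation relation of Lemma~\ref{le:compactness-commutators}, that this product equals $A_{\alpha,\gamma}$ up to a compact operator. Once we have
\[
(U_\alpha^{-1}P_{\overline\gamma}^++P_{\overline\gamma}^-)(U_\alpha P_\gamma^++P_\gamma^-)\simeq A_{\alpha,\gamma},
\]
a parallel computation (or an application of Lemma~\ref{le:complex-conjugation}) will give a companion factorization with the two factors interchanged, so that $A_{\alpha,\gamma}$ is simultaneously a product of these two operators in either order (modulo $\cK$).

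The key computational step is to expand the product $(U_\alpha^{-1}P_{\overline\gamma}^++P_{\overline\gamma}^-)(U_\alpha P_\gamma^++P_\gamma^-)$. First I would distribute to obtain four terms, $U_\alpha^{-1}P_{\overline\gamma}^+U_\alpha P_\gamma^+$, $U_\alpha^{-1}P_{\overline\gamma}^+P_\gamma^-$, $P_{\overline\gamma}^-U_\alpha P_\gamma^+$, and $P_{\overline\gamma}^-P_\gamma^-$. Since $P_{\overline\gamma}^\pm$ lie in the commutative algebra $\cA$ (Lemma~\ref{le:alg-A}), Lemma~\ref{le:compactness-commutators} lets me move $U_\alpha^{\pm1}$ past these projections at the cost of a compact operator, collecting the shift factors as $I$, $U_\alpha$, or $U_\alpha^{-1}$. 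Then Lemma~\ref{le:P-pm-relations}, applied with the pairs $(\overline\gamma,\gamma)$ in the appropriate $\pm$ combinations, rewrites the products $P_{\overline\gamma}^\pm P_\gamma^\mp$ in terms of $R_{\overline\gamma}R_\gamma$ (which, by Lemma~\ref{le:alg-A-commutative}, commutes and equals $R_\gamma R_{\overline\gamma}$). The trigonometric prefactors $\cos[\pi(\overline\gamma-\gamma)]$ and $e^{\pm i\pi(\gamma-\overline\gamma)}$ collapse: with $\gamma-\overline\gamma=2i\Im\gamma$ one has $e^{i\pi(\gamma-\overline\gamma)}=e^{-2\pi\Im\gamma}$ and $e^{i\pi(\overline\gamma-\gamma)}=e^{2\pi\Im\gamma}$, and it is exactly these exponentials that produce the coefficients $e^{\pm2\pi\Im\gamma}$ and the combination $(I-U_\alpha^{-1})$, $(I-U_\alpha)$ appearing in \eqref{eq:operator-A}. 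Matching the surviving $I$-term (from $P_{\overline\gamma}^-P_\gamma^-+P_{\overline\gamma}^+P_\gamma^+$ type contributions) against the leading $I$ in $A_{\alpha,\gamma}$ completes the identification.

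With the factorizations in hand, the index conclusion follows from standard Fredholm algebra. If $A_{\alpha,\gamma}$ is Fredholm with index zero and $A_{\alpha,\gamma}\simeq BC$ where $B=U_\alpha^{-1}P_{\overline\gamma}^++P_{\overline\gamma}^-$ and $C=U_\alpha P_\gamma^++P_\gamma^-$, then $BC$ is Fredholm; to deduce that each factor is separately Fredholm I also produce the reversed product $CB\simeq A_{\alpha,\gamma}'$ (the operator obtained from $A_{\alpha,\gamma}$ by swapping $U_\alpha$ and $U_\alpha^{-1}$, which by Lemma~\ref{le:complex-conjugation} is Fredholm of index zero precisely when $A_{\alpha,\gamma}$ is). Since both $BC$ and $CB$ are Fredholm, both $B$ and $C$ are Fredholm, and the product rule gives $\Ind B+\Ind C=\Ind(BC)=0$ together with $\Ind C+\Ind B=\Ind(CB)=0$; invoking the symmetry relating $B$ and $C$ through complex conjugation (Lemma~\ref{le:complex-conjugation}, under which $\Ind$ is preserved since $\cC$ is an antilinear isometric involution) forces $\Ind B=\Ind C=0$, which is \eqref{eq:reduction-0}.

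The main obstacle I anticipate is the bookkeeping in the second paragraph: keeping track of which $\pm$ sign and which ordering $(\overline\gamma,\gamma)$ versus $(\gamma,\overline\gamma)$ enters each of the four cross terms, and verifying that the compact-error terms from the commutations do not corrupt the exact coefficients, so that the trigonometric identities assemble precisely into the bracketed expression in \eqref{eq:operator-A} rather than into something that merely resembles it. A secondary subtlety is justifying the final index split: naively $BC$ Fredholm of index zero only gives $\Ind B=-\Ind C$, so the complex-conjugation symmetry (or an auxiliary homotopy to the case $\Im\gamma=0$, where the result is already known from \cite{KKL14}) is essential to pin each index to zero individually rather than merely their sum.
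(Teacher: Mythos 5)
Your overall architecture coincides with the paper's proof: expand the products of the two operators modulo compacts using Lemma~\ref{le:compactness-commutators} and the commutativity of $\cA$ (Lemmas~\ref{le:alg-A-commutative} and \ref{le:alg-A}), identify the result with $A_{\alpha,\gamma}$ via Lemma~\ref{le:P-pm-relations} and the identities $e^{i\pi(\gamma-\overline{\gamma})}=e^{-2\pi\Im\gamma}$, $e^{i\pi(\overline{\gamma}-\gamma)}=e^{2\pi\Im\gamma}$, extract left and right regularizers for each factor from a two-sided regularizer of $A_{\alpha,\gamma}$, and resolve the index ambiguity $\Ind B=-\Ind C$ by the complex-conjugation symmetry $\cC(U_\alpha P_\gamma^++P_\gamma^-)\cC=U_\alpha(U_\alpha^{-1}P_{\overline{\gamma}}^++P_{\overline{\gamma}}^-)$ together with invariance of the index under conjugation by the antilinear isometry $\cC$. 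That endgame is exactly the paper's.

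However, one claim in your plan is false, and taken literally it would open a gap. You assert that the reversed product $CB=(U_\alpha P_\gamma^++P_\gamma^-)(U_\alpha^{-1}P_{\overline{\gamma}}^++P_{\overline{\gamma}}^-)$ is congruent to a ``swapped'' operator $A'_{\alpha,\gamma}$ in which $U_\alpha$ and $U_\alpha^{-1}$ trade places, and that Lemma~\ref{le:complex-conjugation} shows $A'_{\alpha,\gamma}$ is Fredholm of index zero precisely when $A_{\alpha,\gamma}$ is. The second part does not hold: since the scalars $e^{\pm 2\pi\Im\gamma}$ are real, $\cC U_\alpha^{\pm1}\cC=U_\alpha^{\pm1}$, and $\cC R_\gamma R_{\overline{\gamma}}\cC=R_{\overline{\gamma}}R_\gamma=R_\gamma R_{\overline{\gamma}}$, conjugation by $\cC$ fixes $A_{\alpha,\gamma}$ rather than interchanging $U_\alpha$ and $U_\alpha^{-1}$; as the hypothesis of the theorem concerns $A_{\alpha,\gamma}$ only, you would have no control over $A'_{\alpha,\gamma}$, and the deduction that both $BC$ and $CB$ are Fredholm would break down. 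Fortunately, the first part is also false, and in your favor: in \emph{either} ordering, after commuting the shifts past the projections, the operator $U_\alpha^{-1}$ ends up multiplying $P_\gamma^-P_{\overline{\gamma}}^+=-\frac{e^{2\pi\Im\gamma}}{4}R_\gamma R_{\overline{\gamma}}$ and the operator $U_\alpha$ ends up multiplying $P_{\overline{\gamma}}^-P_\gamma^+=-\frac{e^{-2\pi\Im\gamma}}{4}R_\gamma R_{\overline{\gamma}}$, so that
\begin{equation*}
BC\simeq CB\simeq
P_\gamma^+P_{\overline{\gamma}}^+ +U_\alpha^{-1}P_\gamma^-P_{\overline{\gamma}}^+
+U_\alpha P_{\overline{\gamma}}^-P_\gamma^+ +P_\gamma^-P_{\overline{\gamma}}^-
=A_{\alpha,\gamma},
\end{equation*}
one and the same operator. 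Replace your ``swap'' claim by this computation and your argument closes, becoming identical to the paper's proof.
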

%%%--------------------------------------------------------------------------------------
\begin{proof}
The idea of the proof is borrowed from \cite[Theorem~9.4]{K07}
(see also \cite[Theorem~6.1]{K08}).
From Lemmas~\ref{le:alg-A-commutative} and \ref{le:alg-A} it follows that the operators
$P_\gamma^+$, $P_\gamma^-$, $P_{\overline{\gamma}}^+$, $P_{\overline{\gamma}}^-$,
$R_\gamma,R_{\overline{\gamma}}$ pairwise commute. From this observation and
Lemma~\ref{le:compactness-commutators}  we obtain
%%%
\begin{align}
(U_\alpha P_\gamma^+ +P_\gamma^-)(U_\alpha^{-1}P_{\overline{\gamma}}^++P_{\overline{\gamma}}^-)
& \simeq
\widetilde{A}_{\alpha,\gamma},
\label{eq:reduction-1}
\\
(U_\alpha^{-1}P_{\overline{\gamma}}^++P_{\overline{\gamma}}^-)(U_\alpha P_\gamma^+ +P_\gamma^-)
& \simeq
\widetilde{A}_{\alpha,\gamma}.
\label{eq:reduction-2}
\end{align}
%%%
where
\[
\widetilde{A}_{\alpha,\gamma}:=
P_\gamma^+ P_{\overline{\gamma}}^+
+
U_\alpha^{-1}P_\gamma^- P_{\overline{\gamma}}^+
+
U_\alpha P_{\overline{\gamma}}^- P_\gamma^+
+
P_\gamma^- P_{\overline{\gamma}}^-.
\]
Since $\gamma-\overline{\gamma}=2i\Im\gamma$, we have
\[
\cos[\pi(\gamma-\overline{\gamma})]=\frac{e^{2\pi\Im\gamma}+e^{-2\pi\Im\gamma}}{2},
\quad
e^{i\pi(\gamma-\overline{\gamma})}=e^{-2\pi\Im\gamma},
\quad
e^{i\pi(\overline{\gamma}-\gamma)}=e^{2\pi\Im\gamma}.
\]
From Lemma~\ref{le:P-pm-relations} and the above identities we get
%%%
\begin{align}
\widetilde{A}_{\alpha,\gamma}
= &
\left(\frac{1}{2}P_\gamma^++\frac{1}{2}P_{\overline{\gamma}}^+
+\frac{\cos[\pi(\gamma-\overline{\gamma})]}{4}R_\gamma R_{\overline{\gamma}}\right)
-\frac{e^{i\pi(\overline{\gamma}-\gamma)}}{4}U_\alpha^{-1}R_\gamma R_{\overline{\gamma}}
\nonumber\\
&+
\left(\frac{1}{2}P_\gamma^-+\frac{1}{2}P_{\overline{\gamma}}^-
+\frac{\cos[\pi(\gamma-\overline{\gamma})]}{4}R_\gamma R_{\overline{\gamma}}\right)
-\frac{e^{i\pi(\gamma-\overline{\gamma})}}{4}U_\alpha R_\gamma R_{\overline{\gamma}}
\nonumber\\
=&
I+\frac{e^{2\pi\Im\gamma}+e^{-2\pi\Im\gamma}}{4}R_\gamma R_{\overline{\gamma}}
-\frac{e^{2\pi\Im\gamma}}{4}U_\alpha^{-1}R_\gamma R_{\overline{\gamma}}
-\frac{e^{-2\pi\Im\gamma}}{4}U_\alpha R_\gamma R_{\overline{\gamma}}
\nonumber\\
=& A_{\alpha,\gamma}.
\label{eq:reduction-3}
\end{align}
%%%
By the hypothesis, $A_{\alpha,\gamma}$ is Fredholm. Therefore there exist $B_r,B_l\in\cB(L^p(\mR_+))$ such
that
%%%
\begin{equation}\label{eq:reduction-4}
A_{\alpha,\gamma}B_r\simeq B_lA_{\alpha,\gamma}\simeq I.
\end{equation}
%%%
Multiplying \eqref{eq:reduction-1} from the right by $B_r$ and from the left by $B_l$
and taking into account \eqref{eq:reduction-3}--\eqref{eq:reduction-4}, we see that
$(U_\alpha^{-1}P_{\overline{\gamma}}^++P_{\overline{\gamma}}^-)B_r$ is a right regularizer
for the operator $U_\alpha P_\gamma^++P_\gamma^-$ and $B_l(U_\alpha P_\gamma^++P_\gamma^-)$
is a left regularizer for the operator $U_\alpha^{-1}P_{\overline{\gamma}}^++P_{\overline{\gamma}}^-$.
Analogously, from \eqref{eq:reduction-2}--\eqref{eq:reduction-4} we deduce that
$B_l(U_\alpha^{-1}P_{\overline{\gamma}}^++P_{\overline{\gamma}}^-)$ is a left regularizer
for $U_\alpha P_\gamma^++P_\gamma^-$ and $(U_\alpha P_\gamma^++P_\gamma^-)B_r$ is a right
regularizer for $U_\alpha^{-1}P_{\overline{\gamma}}^++P_{\overline{\gamma}}^-$.
Thus, both $U_\alpha P_\gamma^++P_\gamma^-$ and
$U_\alpha^{-1}P_{\overline{\gamma}}^++P_{\overline{\gamma}}^-$ are Fredholm.

From this observation, relations \eqref{eq:reduction-1}, \eqref{eq:reduction-3},
and well known properties of indices of Fredholm operators we derive that
%%%
\begin{equation}\label{eq:reduction-5}
\Ind(U_\alpha P_\gamma^++P_\gamma^-)+
\Ind(U_\alpha^{-1}P_{\overline{\gamma}}^++P_{\overline{\gamma}}^-)=
\Ind A_{\alpha,\gamma}=0.
\end{equation}
%%%
Since the operator of complex conjugation $\cC$ is isometric and anti-linear on $L^p(\mR_+)$,
we have
%%%
\begin{equation}\label{eq:reductiojn-6}
\Ind(U_\alpha P_\gamma^++P_\gamma^-)
=
\Ind[\cC(U_\alpha P_\gamma^++P_\gamma^-)\cC].
\end{equation}
%%%
From Lemma~\ref{le:complex-conjugation} we obtain
\[
\cC(U_\alpha P_\gamma^++P_\gamma^-)\cC
=
U_\alpha P_{\overline{\gamma}}^-+P_{\overline{\gamma}}^+
=
U_\alpha (U_\alpha^{-1}P_{\overline{\gamma}}^++P_{\overline{\gamma}}^-),
\]
whence
%%%
\begin{equation}\label{eq:reduction-7}
\Ind[\cC(U_\alpha P_\gamma^++P_\gamma^-)\cC]
=
\Ind U_\alpha+\Ind (U_\alpha^{-1}P_{\overline{\gamma}}^++P_{\overline{\gamma}}^-)
=
\Ind (U_\alpha^{-1}P_{\overline{\gamma}}^++P_{\overline{\gamma}}^-).
\end{equation}
%%%
Combining \eqref{eq:reduction-5}--\eqref{eq:reduction-7}, we arrive at \eqref{eq:reduction-0}.
\end{proof}
%%%--------------------------------------------------------------------------------------
Due to Theorem~\ref{th:reduction}, in order to complete the proof of
Theorem~\ref{th:one-shift}, it remains to show that the operator $A_{\alpha,\gamma}$ given by
\eqref{eq:operator-A} is Fredholm and $\Ind A_{\alpha,\gamma}=0$.
%%%--------------------------------------------------------------------------------------
\section{Mellin pseudodifferential operators and their symbols}
\label{sec:Mellin-PDO}
%%%-----------------------------------------------------------------------------------------
\subsection{Melin PDO's: overview}
Mellin pseudodifferential operators are generalizations of Mellin convolution
operators. Let $\fa$ be a sufficiently smooth function defined on $\mR_+\times\mR$.
The Mellin pseudodifferential operator (shortly, Mellin PDO) with symbol $\fa$
is initially defined for smooth functions $f$ of compact support by the iterated
integral
\begin{align}
[\Op(\fa) f](t)
&=
[\cM^{-1}\fa(t,\cdot)\cM f](t)
\nonumber\\
&=
\frac{1}{2\pi}\int_\mR dx \int_{\mR_+}
\fa(t,x)\left(\frac{t}{\tau}\right)^{ix}f(\tau) \frac{d\tau}{\tau}
\quad\mbox{for}\quad t\in\mR_+.
\label{eq:Mellin-PDO}
\end{align}
%%%
Obviously, if $\fa(t,x)=a(x)$ for all $(t,x)\in\mR_+\times\mR$, then the Mellin
pseudodifferential operator $\Op(\fa)$ becomes the Mellin convolution operator
$\Op(\fa)=\Co(a)$.

In 1991 Vladi\-mir Rabinovich \cite{R92} proposed to use Mellin pseudodifferential
operators (shortly, Mellin PDO's) with $C^\infty$ slowly oscillating
symbols to study singular integral operators with slowly oscillating coefficients
on $L^p$ spaces. This idea was exploited in a series of papers by Rabinovich and
coauthors (see \cite[Sections~4.6--4.7]{RRS04} for the complete history up to 2004).
Rabinovich stated in \cite[Theorem~2.6]{R98} a Fredholm criterion for Mellin PDO's
with $C^\infty$ slowly oscillating (or slowly varying) symbols
on the spaces $L^p(\mR_+,d\mu)$ for $1<p<\infty$. Namely,
he considered symbols $\fa\in C^\infty(\mR_+\times\mR)$ such that
%%%
\begin{equation}\label{eq:Hoermander}
\sup_{(t,x)\in\mR_+\times\mR}
\big|(t\partial_t)^j\partial_x^k\fa(t,x)\big|(1+x^2)^{k/2}<\infty
\quad\mbox{for all}\quad j,k\in\mZ_+
\end{equation}
%%%
and
%%%
\begin{equation}\label{eq:Grushin}
\lim_{t\to s}\sup_{x\in\mR}
\big|(t\partial_t)^j\partial_x^k\fa(t,x)\big|(1+x^2)^{k/2}=0
\mbox{ for all }\ j\in\mN,\ k\in\mZ_+, \ s\in\{0,\infty\}.
\end{equation}
%%%
Here and in what follows $\partial_t$ and $\partial_x$ denote the operators of
partial  differentiation with respect to $t$ and to $x$. Notice that
\eqref{eq:Hoermander}  defines nothing but the Mellin version of the H\"ormander
class $S_{1,0}^0(\mR)$ (see, e.g., \cite{H67}, \cite[Chap.~2, Section~1]{K82} for
the definition of the  H\"ormander classes $S_{\varrho,\delta}^m(\mR^n)$). If
$\fa$ satisfies  \eqref{eq:Hoermander}, then the Mellin PDO $\Op(\fa)$ is bounded
on the spaces $L^p(\mR_+,d\mu)$ for $1<p<\infty$ (see, e.g.,
\cite[Chap.~VI, Proposition~4]{St93} for the corresponding Fourier PDO's).
Condition \eqref{eq:Grushin}
is the Mellin version of Grushin's definition of slowly varying symbols in the
first variable (see, e.g., \cite{G70}, \cite[Chap.~3, Defintion~5.11]{K82}).

The above mentioned results have a disadvantage that the smoothness conditions
imposed on slowly oscillating symbols are very strong. In particular, they are not
applicable directly to the problem we are considering in the paper.

In 2005 Yuri Karlovich \cite{K06} (see also \cite{K06-IWOTA,K09}) developed a
Fredholm theory of Fourier pseudodifferential with slowly oscillating symbols
of limited smoothness in the spirit of Sarason's definition \cite[p.~820]{S77}
of slow oscillation adopted in the present paper (much less restrictive than in
\cite{R98} and in the works mentioned in \cite{RRS04}).
Necessary for our purposes results from \cite{K06,K06-IWOTA,K09} were translated to
the Mellin setting, for instance, in \cite{KKL14} with the aid of the transformation
\[
A\mapsto E^{-1}AE,
\]
where $A\in\cB(L^p(\mR))$ and the isometric isomorphism $E:L^p(\mR_+,d\mu)\to L^p(\mR)$
is defined by \eqref{eq:def-E}. For the convenience of the reader, we reproduce these
results here exactly in the same form as they were stated in \cite{KKL14}, where more
details on their proofs can be found.
%%%--------------------------------------------------------------------------------------
\subsection{Boundedness of Mellin PDO's}
Let $a$ be an absolutely continuous function of finite total variation
\[
V(a):=\int_\mR|a'(x)|dx
\]
on $\mR$. The set $V(\mR)$ of all absolutely continuous functions of finite
total variation on $\mR$ becomes a Banach algebra equipped with the norm
\[
\|a\|_V:=\|a\|_{L^\infty(\mR)}+V(a).
\]

Let $C_b(\mR_+,V(\mR))$ denote the Banach
algebra of all bounded continuous $V(\mR)$-valued functions on $\mR_+$ with
the norm
\[
\|\fa(\cdot,\cdot)\|_{C_b(\mR_+,V(\mR))}
=
\sup_{t\in\mR_+}\|\fa(t,\cdot)\|_V.
\]
As usual, let $C_0^\infty(\mR_+)$ be the set of all infinitely differentiable
functions of compact support on $\mR_+$.
%%%------------------------------------------------------------------------------
\begin{theorem}[{\cite[Theorem~3.1]{KKL14}}]
\label{th:boundedness-PDO}
If $\fa\in C_b(\mR_+,V(\mR))$, then the Mellin pseudodifferential operator
$\operatorname{Op}(\fa)$, defined for functions $f\in C_0^\infty(\mR_+)$ by
the iterated integral \eqref{eq:Mellin-PDO},
extends to a bounded linear operator on the space $L^p(\mR_+,d\mu)$
and there is a number $C_p\in(0,\infty)$ depending only on $p$ such
that
\[
\|\operatorname{Op}(\fa)\|_{\cB(L^p(\mR_+,d\mu))}
\le C_p\|\fa\|_{C_b(\mR_+,V(\mR))}.
\]
\end{theorem}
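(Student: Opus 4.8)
The plan is to transport the whole problem to the real line through the isometric isomorphism $E$ of \eqref{eq:def-E}, thereby reducing the assertion to a boundedness theorem for \emph{Fourier} pseudodifferential operators; this is precisely the passage $A\mapsto E^{-1}AE$ advertised after \eqref{eq:def-E}. First I would compute $E\Op(\fa)E^{-1}$. Writing $t=e^y$, $\tau=e^s$ in \eqref{eq:Mellin-PDO} and using that $E$ turns the Mellin transform $\cM$ into the Fourier transform $\cF$, a change of variables shows that for $g=Ef$ one has $[E\Op(\fa)E^{-1}g](y)=\frac{1}{2\pi}\int_\mR b(y,x)(\cF g)(x)e^{ixy}\,dx=:[\mathbf{Op}(b)g](y)$, where $b(y,x):=\fa(e^y,x)$. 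Since $y\mapsto e^y$ is a homeomorphism of $\mR$ onto $\mR_+$, the map $y\mapsto b(y,\cdot)$ inherits boundedness and continuity into $V(\mR)$ and $\|b\|_{C_b(\mR,V(\mR))}=\|\fa\|_{C_b(\mR_+,V(\mR))}$. As $E$ is an isometric isomorphism, $\|\Op(\fa)\|_{\cB(L^p(\mR_+,d\mu))}=\|\mathbf{Op}(b)\|_{\cB(L^p(\mR))}$, so it suffices to prove $\|\mathbf{Op}(b)\|_{\cB(L^p(\mR))}\le C_p\|b\|_{C_b(\mR,V(\mR))}$ for every $b\in C_b(\mR,V(\mR))$.

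Next I would establish this Fourier estimate, first on a dense class (say $f\in C_0^\infty(\mR)$ and symbols $b$ smooth and nicely decaying in $x$) so that all integrals converge absolutely, and then pass to the general case by the uniform bound together with density. The key structural observation is $[\mathbf{Op}(b)f](y)=[W^0(b(y,\cdot))f](y)$, i.e. $\mathbf{Op}(b)$ is a Fourier convolution whose symbol is frozen at the running point $y$. When $b$ is independent of $y$ this is the pure-multiplier case, and the desired bound is exactly the Stechkin inequality $\|W^0(a)\|_{\cB(L^p(\mR))}\le C_p\|a\|_V$ for functions of bounded variation. For genuine $y$-dependence I would exploit the variation in $x$ through the representation $b(y,x)=b(y,-\infty)+\int_\mR\chi_{(\xi,\infty)}(x)\,\partial_\xi b(y,\xi)\,d\xi$, which turns $\mathbf{Op}(b)$ into a bounded multiplication operator plus the superposition $\int_\mR M_{c_\xi}W^0(\chi_{(\xi,\infty)})\,d\xi$, where $c_\xi(y)=\partial_\xi b(y,\xi)$ and $M_{c_\xi}$ is multiplication by $c_\xi$. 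Each factor $W^0(\chi_{(\xi,\infty)})$ is a modulated translate of the Riesz projection and has $\cB(L^p(\mR))$-norm bounded by $C_p$ uniformly in $\xi$.

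The main obstacle is to control this superposition by the \emph{correct} norm. The crude triangle inequality only yields $\int_\mR\|c_\xi\|_\infty\,\|W^0(\chi_{(\xi,\infty)})\|_{\cB(L^p)}\,d\xi\le C_p\int_\mR\sup_{y}|\partial_\xi b(y,\xi)|\,d\xi$, and $\int_\mR\sup_y|\partial_\xi b(y,\xi)|\,d\xi$ is in general strictly larger than $\sup_y\int_\mR|\partial_\xi b(y,\xi)|\,d\xi=\sup_y V(b(y,\cdot))$; hence this route fails to bound the operator by $\|b\|_{C_b(\mR,V(\mR))}$. To recover the sharp constant I would instead treat $\mathbf{Op}(b)$ as a Calder\'on--Zygmund operator: away from the multiplication part its Schwartz kernel is $K(y,s)=\check b(y,y-s)$ with $\check b(y,z)=\frac{1}{2\pi}\int_\mR b(y,x)e^{ixz}\,dx$, and integration by parts in $x$ gives $|K(y,s)|\le C\,V(b(y,\cdot))/|y-s|$ together with the analogous derivative estimates, all with constants governed by $\sup_y V(b(y,\cdot))$. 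The genuinely hard step is then the $L^2$-boundedness with the right constant, which I would obtain from a $T(1)$-type argument (here $\mathbf{Op}(b)1=b(\cdot,0)\in BMO$, and the adjoint is handled symmetrically, so the $BMO$ and weak-boundedness hypotheses are available); once the constant-sharp $L^2$ estimate is in place, the $L^p(\mR)$ bound for every $p\in(1,\infty)$ follows from Calder\'on--Zygmund theory, and transporting it back through $E$ finishes the proof. I expect essentially all the difficulty to concentrate in that constant-sharp $L^2$ estimate, which is exactly why the theorem is delicate rather than an immediate consequence of the Stechkin inequality.
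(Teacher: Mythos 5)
Your first step---conjugating by $E$ to reduce the Mellin statement to the boundedness of Fourier PDO's with symbols in $C_b(\mR,V(\mR))$---is exactly the route the paper takes: it does not prove the theorem but quotes \cite[Theorem~3.1]{KKL14}, which in turn is the $E$-transform of Yu.~Karlovich's Fourier result \cite[Theorem~3.1]{K06}. You also correctly diagnose why the naive Stechkin-type argument fails (the triangle inequality produces $\int_\mR\sup_y|\partial_\xi b(y,\xi)|\,d\xi$ instead of $\sup_y V(b(y,\cdot))$). The gap is in your proposed repair. The claim that integration by parts gives, besides the size bound $|K(y,s)|\le C\,V(b(y,\cdot))/|y-s|$, also ``the analogous derivative estimates, all with constants governed by $\sup_y V(b(y,\cdot))$'' is false: bounded variation in $x$ controls only the size of the kernel, while $|\partial_s K(y,s)|\lesssim |y-s|^{-2}$ would require control of quantities like $\int_\mR|\xi|\,|\partial_\xi b(y,\xi)|\,d\xi$, which the $V(\mR)$-norm does not provide. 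Concretely, for $b(y,x)=\chi_{(\xi_0,\infty)}(x)$ one has $V(b(y,\cdot))=1$, while the off-diagonal kernel is $e^{i\xi_0(y-s)}/\bigl(2\pi i(y-s)\bigr)$, so $|\partial_sK(y,s)|\sim|\xi_0|/|y-s|$ at scale $|y-s|\sim 1$: the Calder\'on--Zygmund regularity constant blows up like $|\xi_0|$ even though the variation stays equal to $1$, and for superpositions $\sum_k c_k\chi_{(\xi_k,\infty)}$ with $\sum_k|c_k|<\infty$ no H\"ormander-type condition need hold at all. Since both the $T(1)$ theorem and the $L^2\to L^p$ extrapolation require standard-kernel regularity, the entire second half of your argument collapses; the assertion that $T^*(1)$ is ``handled symmetrically'' is likewise unjustified, because the adjoint of a PDO is a PDO in the opposite (right) quantization with a genuinely different symbol.

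The missing ingredient---and the one the cited proof in \cite{K06} actually uses---is the Carleson--Hunt theorem. Keep your own decomposition, but estimate pointwise in $y$ \emph{before} taking any norms: for $f\in C_0^\infty(\mR)$,
\[
\bigl|[\mathbf{Op}(b)f](y)\bigr|
\le \|b\|_{L^\infty}\,|f(y)|
+\int_\mR \bigl|\partial_\xi b(y,\xi)\bigr|\,
\bigl|[W^0(\chi_{(\xi,\infty)})f](y)\bigr|\,d\xi
\le \|b\|_{L^\infty}\,|f(y)|+\Bigl(\sup_{t}V(b(t,\cdot))\Bigr)(C_*f)(y),
\]
where $(C_*f)(y):=\sup_{\xi\in\mR}\bigl|[W^0(\chi_{(\xi,\infty)})f](y)\bigr|$ is the maximal operator of partial Fourier integrals, i.e.\ a version of the Carleson operator. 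Estimating under the $\xi$-integral for each fixed $y$ is precisely what defeats the $\sup$/$\int$ interchange you identified as the obstruction. By Hunt's theorem, $C_*$ is bounded on $L^p(\mR)$ for $1<p<\infty$, so taking $L^p$-norms yields $\|\mathbf{Op}(b)f\|_{L^p}\le C_p\|b\|_{C_b(\mR,V(\mR))}\|f\|_{L^p}$ with $C_p$ depending only on $p$, and conjugating back by $E$ gives the Mellin statement. The deep input is thus Carleson--Hunt, not Calder\'on--Zygmund theory, and it cannot be bypassed along the lines you propose.
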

%%%----------------------------------------------------------------------------
\subsection{Compactness of Mellin PDO's and their semi-commutators}
Consider the Banach subalgebra $SO(\mR_+,V(\mR))$ of the algebra $C_b(\mR_+,V(\mR))$
consisting of all $V(\mR)$-valued functions $\fa$ on $\mR_+$ that slowly
oscillate at $0$ and $\infty$, that is,
\[
\lim_{r\to s} \max_{t,\tau\in[r,2r]}\|\fa(t,\cdot)-\fa(\tau,\cdot)\|_{L^\infty(\mR)}=0,
\quad s\in\{0,\infty\}.
\]

Let $\cE(\mR_+,V(\mR))$ be the Banach algebra of all $V(\mR)$-valued functions
$\fa$ in the algebra $SO(\mR_+,V(\mR))$ such that
\[
\lim_{|h|\to 0}\sup_{t\in\mR_+}\big\|\fa(t,\cdot)-\fa^h(t,\cdot)\big\|_V=0
\]
where $\fa^h(t,x):=\fa(t,x+h)$ for all $(t,x)\in\mR_+\times \mR$.
%%%----------------------------------------------------------------------------
\begin{theorem}[{\cite[Theorem~3.2]{KKL14}}]
\label{th:compactness-PDO}
If $\fa\in\cE(\mR_+,V(\mR))$ and
\[
\lim_{\ln^2 t+x^2\to\infty}\fa(t,x)=0,
\]
then $\Op(\fa)\in\cK(L^p(\mR_+,d\mu))$.
\end{theorem}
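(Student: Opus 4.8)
The plan is to pass to the Fourier setting via the isometric isomorphism $E$ of \eqref{eq:def-E} and to realize $\Op(\fa)$ as a Fourier pseudodifferential operator whose symbol decays to zero throughout the phase space, a situation in which compactness is natural. Concretely, $E\,\Op(\fa)\,E^{-1}$ is the Fourier pseudodifferential operator sending $f$ to $\frac{1}{2\pi}\int_\mR\tilde\fa(y,\xi)(\cF f)(\xi)e^{iy\xi}\,d\xi$, where $\tilde\fa(y,\xi):=\fa(e^y,\xi)$; indeed, when $\fa$ is independent of $t$ this specializes to the identity $E\,\Co(a)\,E^{-1}=W^0(a)$ recorded in the excerpt. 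Under $E$ the hypothesis $\fa\in\cE(\mR_+,V(\mR))$ becomes an analogous regularity of $\tilde\fa$ in $y$ (uniform boundedness in $V(\mR)$, slow oscillation as $y\to\pm\infty$, and equicontinuity of the $\xi$-translates in the $V(\mR)$-norm), while the decay condition $\lim_{\ln^2 t+x^2\to\infty}\fa(t,x)=0$ turns into $\tilde\fa(y,\xi)\to 0$ as $y^2+\xi^2\to\infty$. Since $E$ is an isometric isomorphism, it suffices to prove that this Fourier pseudodifferential operator is compact on $L^p(\mR)$.

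First I would treat model symbols of product form $\tilde\fa(y,\xi)=b(y)a(\xi)$, where $b$ is continuous and vanishes at $\pm\infty$ and $a\in V(\mR)$ vanishes at $\pm\infty$. For such a symbol the operator factors as $bI\,W^0(a)$, and the product of a multiplication operator by a function vanishing at infinity with a Fourier convolution whose symbol vanishes at infinity is compact on $L^p(\mR)$ for every $1<p<\infty$ (a classical fact: on $L^2(\mR)$ one approximates by Hilbert--Schmidt operators after cutting $a$ and $b$ off, and the $L^p$ statement then follows by a Krasnoselskii-type interpolation of compactness). Next I would approximate the general symbol $\tilde\fa$ in the norm of $C_b(\mR,V(\mR))$ by finite sums $\sum_k b_k(y)a_k(\xi)$ of such product symbols: the joint decay of $\tilde\fa$ lets me replace it, up to an arbitrarily small error, by a symbol supported in a bounded region of the phase space, and the equicontinuity of the $\xi$-translates supplied by $\cE(\mR_+,V(\mR))$ lets me mollify and discretize in the $\xi$-variable so as to produce a finite tensor expansion. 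By the boundedness estimate of Theorem~\ref{th:boundedness-PDO} (transported to $\mR$ by $E$), convergence of the symbols in $C_b(\mR,V(\mR))$ forces operator-norm convergence of the corresponding operators; hence $E\,\Op(\fa)\,E^{-1}$ is a norm limit of compact operators and therefore compact, and transferring back through $E$ yields $\Op(\fa)\in\cK(L^p(\mR_+,d\mu))$.

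The delicate point, and the step I expect to be the main obstacle, is carrying out the approximation in the correct norm, namely controlling the total-variation part of $\|\cdot\|_{V(\mR)}$ and not merely the supremum. The decay hypothesis and the slow oscillation in $t$ are statements about the $L^\infty(\mR)$-behaviour of $\fa(t,\cdot)$, and neither controls the variation $V(\fa(t,\cdot))$ as $|\ln t|\to\infty$; moreover a crude cutoff in $\xi$ adds the variation of the cutoff times $\|\tilde\fa\|_\infty$ and leaves an uncontrolled tail of the variation of $\tilde\fa$. This is precisely where the defining property of $\cE(\mR_+,V(\mR))$, namely $\lim_{|h|\to 0}\sup_t\|\fa(t,\cdot)-\fa^h(t,\cdot)\|_V=0$, is indispensable: it guarantees that mollification in $\xi$ approximates $\tilde\fa$ in the full $V(\mR)$-norm uniformly in $y$, so that the finite product expansion converges in $C_b(\mR,V(\mR))$ and the argument closes.
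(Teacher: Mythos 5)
Your overall frame --- transporting $\Op(\fa)$ to the Fourier side with the isomorphism $E$ of \eqref{eq:def-E} --- matches how this statement actually enters the paper: Theorem~\ref{th:compactness-PDO} is not proved here at all, but quoted verbatim from \cite[Theorem~3.2]{KKL14}, which is itself the $E$-transcription of Yu.~Karlovich's Fourier-setting compactness theorem in \cite{K06}. Your model case is also fine: $bW^0(a)$ is compact on $L^p(\mR)$ when $b$ is continuous and vanishes at $\pm\infty$ and $a\in V(\mR)$ vanishes at $\pm\infty$. The genuine gap is the approximation step, and it is not a removable technicality: the approximation you need is impossible for some admissible symbols. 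Indeed, if $\fs(t,x)=\sum_{k=1}^N b_k(t)a_k(x)$ with each $b_k$ vanishing at $0$ and $\infty$, then $\|\fs(t,\cdot)\|_V\le\sum_k|b_k(t)|\,\|a_k\|_V\to0$ as $|\ln t|\to\infty$, so only symbols whose fiber norms $\|\fa(t,\cdot)\|_V$ vanish at infinity can be approximated by such sums in $C_b(\mR_+,V(\mR))$; but the hypotheses of the theorem control the supremum of $\fa(t,\cdot)$ at infinity, never its variation. Concretely, let
\[
\fa(t,x)=c(t)\,\sin(x)\,\phi\bigl(x\,c(t)\bigr),
\qquad c(t)=\frac{1}{1+|\ln t|},
\]
where $\phi$ is a smooth bump with $\phi\equiv1$ on $[-1,1]$ and support in $[-2,2]$. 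Then $|\fa(t,x)|\le\min\{c(t),2/|x|\}$, so the joint decay hypothesis holds; $\|\fa(t,\cdot)\|_{L^\infty(\mR)}\le c(t)\to0$ gives the slow-oscillation condition; and $\sup_{t\in\mR_+}\|\fa(t,\cdot)-\fa^h(t,\cdot)\|_V=O(|h|)$ because the oscillation has frequency $1$ uniformly in $t$, so $\fa\in\cE(\mR_+,V(\mR))$. Yet, since $\phi\equiv1$ and $\phi'\equiv0$ on $[-1,1]$,
\[
V(\fa(t,\cdot))\ \ge\ c(t)\int_{|x|\le 1/c(t)}|\cos x|\,dx\ =\ \frac{4}{\pi}+O(c(t)),
\]
so $\|\fa(t,\cdot)\|_V$ stays bounded away from zero as $|\ln t|\to\infty$, and every finite sum $\fs$ as above satisfies $\|\fa-\fs\|_{C_b(\mR_+,V(\mR))}\ge 4/\pi$. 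In particular your first reduction --- replacing $\fa$, up to small error in the norm that controls the operator norm, by a symbol supported in a bounded region of phase space --- already fails.

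You correctly isolated this as the delicate point, but the proposed cure does not work. Mollification in $x$ does converge to $\fa$ in $C_b(\mR_+,V(\mR))$ (that is exactly what the $\cE$-condition buys), but it cannot create decay of variation: $\partial_x\bigl(\fa(t,\cdot)*\rho_\delta\bigr)=\bigl(\partial_x\fa(t,\cdot)\bigr)*\rho_\delta$, so the full and tail variations of the mollified symbol are dominated by those of $\fa$, and in the example above the mollified symbol still has $V$-norm about $4/\pi$ at infinity. What your cutoff-and-discretize scheme actually requires is (i) the uniform tail condition $\lim_{m\to\infty}\sup_{t\in\mR_+}\int_{\mR\setminus[-m,m]}|\partial_x\fa(t,x)|\,dx=0$, which is precisely the condition carving out $\widetilde{\cE}(\mR_+,V(\mR))$ as a proper subalgebra of $\cE(\mR_+,V(\mR))$ and is \emph{not} among the hypotheses, and (ii) decay of $\int_{-m}^{m}|\partial_x\fa(t,x)|\,dx$ as $|\ln t|\to\infty$, which does follow from the hypotheses but needs a separate Fr\'echet--Kolmogorov type argument you do not supply. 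So, as written, your proof establishes the statement at best for $\fa\in\widetilde{\cE}(\mR_+,V(\mR))$, not for all of $\cE(\mR_+,V(\mR))$. This also explains why the cited proof (\cite{K06}, transferred to the Mellin setting in \cite{KKL14}) must work at the operator level: compactness does hold for symbols like the one above (for instance via a Schur-test analysis of the integral kernel of $\Op(\fa)$), even though such symbols cannot be approximated in $C_b(\mR_+,V(\mR))$ by degenerate ones, so no argument that only feeds symbol differences into the bound of Theorem~\ref{th:boundedness-PDO} can reach the full class $\cE(\mR_+,V(\mR))$.
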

%%%---------------------------------------------------------------------------
\begin{theorem}[{\cite[Theorem~3.3]{KKL14}}]
\label{th:comp-semi-commutators-PDO}
If $\fa,\fb\in\cE(\mR_+,V(\mR))$, then
\[
\operatorname{Op}(\fa)\operatorname{Op}(\fb)\simeq \operatorname{Op}(\fa\fb).
\]
\end{theorem}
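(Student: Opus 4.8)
The plan is to establish the compactness of the semi-commutator $\Op(\fa)\Op(\fb)-\Op(\fa\fb)$ by a continuity-and-density argument, reducing the problem to symbols of a conveniently regular subclass and then computing the remainder explicitly.

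First I would observe that the map $\fa\mapsto\Op(\fa)$ is a bounded linear operator from $\cE(\mR_+,V(\mR))$ into $\cB(L^p(\mR_+,d\mu))$ by Theorem~\ref{th:boundedness-PDO} (since $\cE(\mR_+,V(\mR))\subset C_b(\mR_+,V(\mR))$), and that the multiplication $(\fa,\fb)\mapsto\fa\fb$ is continuous on the Banach algebra $\cE(\mR_+,V(\mR))$. Consequently the bilinear map
\[
(\fa,\fb)\longmapsto\Op(\fa)\Op(\fb)-\Op(\fa\fb)
\]
is continuous from $\cE(\mR_+,V(\mR))\times\cE(\mR_+,V(\mR))$ into $\cB(L^p(\mR_+,d\mu))$. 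Since $\cK(L^p(\mR_+,d\mu))$ is a closed subspace, it suffices to prove the claimed relation for $\fa,\fb$ ranging over a dense subset of $\cE(\mR_+,V(\mR))$.

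Next I would take as that dense subset symbols of enhanced smoothness --- for instance finite sums of tensor products $\fa(t,x)=\sum_k c_k(t)d_k(x)$ with $c_k$ a smooth slowly oscillating function on $\mR_+$ and $d_k$ a smooth element of $V(\mR)$ --- for which the classical composition calculus becomes available. For such symbols I would insert the defining iterated integral \eqref{eq:Mellin-PDO} of $\Op(\fa)$ and $\Op(\fb)$, carry out the integration in the intermediate variable, and freeze the slowly oscillating variable of $\fa$. The outcome is an identity $\Op(\fa)\Op(\fb)=\Op(\fa\fb)+\Op(\fr)$, in which the remainder symbol $\fr$ is generated by the finite difference of $\fa$ in its first variable together with the $x$-smoothness of $\fb$. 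The slow oscillation of $\fa$ forces $\fr(t,x)\to 0$ as $t\to 0$ and $t\to\infty$, while the decay of the $x$-derivatives forces $\fr(t,x)\to 0$ as $|x|\to\infty$; thus $\fr$ satisfies the hypotheses of Theorem~\ref{th:compactness-PDO} and $\Op(\fr)\in\cK(L^p(\mR_+,d\mu))$.

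The main obstacle is the limited smoothness of the symbols: because membership in $\cE(\mR_+,V(\mR))$ requires only absolute continuity and finite total variation in $x$ (not H\"ormander-type estimates) and Sarason-type slow oscillation in $t$ (not differentiability), the full asymptotic symbol expansion of the product is unavailable, and the compactness of the remainder must be extracted directly from the oscillatory-integral representation. The delicate points are the integration by parts in the $x$-variable, justified by the absolute continuity underlying $V(\mR)$ and yielding the off-diagonal decay of the Mellin kernel $(t/\tau)^{ix}$, and the verification that the freezing error stays within $\cE(\mR_+,V(\mR))$ and decays as $\ln^2 t+x^2\to\infty$, so that Theorem~\ref{th:compactness-PDO} applies. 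Since these statements are the Mellin translation of Yu.~Karlovich's Fourier pseudodifferential calculus, the required estimates can alternatively be transported through the isometric isomorphism $E$ of \eqref{eq:def-E} from the corresponding results for Fourier pseudodifferential operators.
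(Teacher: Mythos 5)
Your overall reduction framework is fine: the map $(\fa,\fb)\mapsto\Op(\fa)\Op(\fb)-\Op(\fa\fb)$ is bilinear and bounded from $\cE(\mR_+,V(\mR))\times\cE(\mR_+,V(\mR))$ into $\cB(L^p(\mR_+,d\mu))$ by Theorem~\ref{th:boundedness-PDO}, and $\cK(L^p(\mR_+,d\mu))$ is closed, so it would indeed suffice to treat a dense subclass. The fatal problem is that your proposed subclass is \emph{not} dense: finite sums of tensor products $\sum_k c_k(t)d_k(x)$ are not dense in $\cE(\mR_+,V(\mR))$. Any such sum has range $\{\fb(t,\cdot):t\in\mR_+\}$ contained in a bounded subset of a finite-dimensional subspace of $V(\mR)$, hence totally bounded, and total boundedness of the range survives uniform limits; so every symbol in the closure of the tensor products has relatively compact range, already in the $L^\infty(\mR)$ metric. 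But $\cE(\mR_+,V(\mR))$ contains symbols without this property. Take a smooth bump $\rho$ with $\operatorname{supp}\rho\subset[-1,1]$, $\rho(0)=1$, and set
\[
\fa(t,x):=\rho\big(x-\mu(t)\big),
\qquad
\mu(t):=\log\big(1+|\log t|\big).
\]
Then $\|\fa(t,\cdot)\|_V\equiv\|\rho\|_V$; the $\cE$-condition holds because $\|\fa(t,\cdot)-\fa^h(t,\cdot)\|_V=\|\rho-\rho(\cdot+h)\|_V$ is independent of $t$ and tends to $0$; and slow oscillation holds because $\max_{t,\tau\in[r,2r]}|\mu(t)-\mu(\tau)|\to0$ as $r\to0$ and $r\to\infty$. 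So $\fa\in\cE(\mR_+,V(\mR))$. Yet its range contains the translates $\rho(\cdot-2k)$, $k\in\mN$, which are pairwise $1$-separated in $L^\infty(\mR)$, so $\fa$ stays at distance at least $1/5$ from every finite sum of tensor products, and the approximation step on which your whole argument rests collapses. (Such $\fa$ lies in $\cE$ but not in $\widetilde{\cE}$: uniform decay of $\int_{|x|>m}|\partial_x\fa(t,x)|\,dx$ fails, and it is exactly that decay which would force relative compactness of the range. The theorem, however, is stated on $\cE$.)

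There is a second, more conceptual issue: even on tensor products the ``explicit remainder computation'' is not the routine part. For $\fa=c_1\otimes d_1$, $\fb=c_2\otimes d_2$ one has $\Co(d_1)\Co(d_2)=\Co(d_1d_2)$ exactly, so
\[
\Op(\fa)\Op(\fb)-\Op(\fa\fb)=c_1\big[\Co(d_1)\,c_2I-c_2\,\Co(d_1)\big]\Co(d_2),
\]
and the whole content is the compactness of the commutator $[\Co(d),cI]$ with $c\in SO(\mR_+)$ merely slowly oscillating (no derivatives available) and $d$ merely in $V(\mR)$. This commutator is \emph{not} of the form $\Op(\fr)$ with a left symbol $\fr\in\cE(\mR_+,V(\mR))$ vanishing at infinity --- its natural symbol $(c(t)-c(\tau))d(x)$ is a double symbol --- so Theorem~\ref{th:compactness-PDO} does not apply to it, and the slow-oscillation-forces-decay heuristic you invoke does not convert into its hypotheses. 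Controlling precisely such double-symbol remainders for limited-smoothness, Sarason-type slowly oscillating data is the analytic core of Yu.~Karlovich's theory. Note also that the paper itself does not prove Theorem~\ref{th:comp-semi-commutators-PDO}: it quotes it from \cite[Theorem~3.3]{KKL14}, where it is obtained by transporting the Fourier semi-commutator results of \cite{K06,K09} to the Mellin setting through the isometry $E$ of \eqref{eq:def-E} --- exactly the route you mention only in your closing sentence. That transport is the viable proof; the density-plus-computation scheme, as proposed, is not.
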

%%%----------------------------------------------------------------------------
\subsection{Fredholmness of Mellin PDO's}
For a unital commutative Banach algebra $\fA$, let $M(\mathfrak{A})$
denote its maximal ideal space. Identifying the points
$t\in\overline{\mR}_+$ with the evaluation functionals $t(f)=f(t)$
for $f\in C(\overline{\mR}_+)$, we get
\[
M(C(\overline{\mR}_+))=\overline{\mR}_+.
\]
Consider the fibers
\[
M_s(SO(\mR_+))
:=
\big\{\xi\in M(SO(\mR_+)):\xi|_{C(\overline{\mR}_+)}=s\big\}
\]
of the maximal ideal space $M(SO(\mR_+))$ over the points
$s\in\{0,\infty\}$. By \cite[Proposition~2.1]{K09}, the set
\[
\Delta:=M_0(SO(\mR_+))\cup M_\infty(SO(\mR_+))
\]
coincides with $(\operatorname{clos}_{SO^*}\mR_+)\setminus\mR_+$
where $\operatorname{clos}_{SO^*}\mR_+$ is the weak-star closure
of $\mR_+$ in the dual space of $SO(\mR_+)$. Then $M(SO(\mR_+))
=\Delta\cup\mR_+$.

Let $\fa\in\cE(\mR_+,V(\mR))$. For every $t\in\mR_+$, the function $\fa(t,\cdot)$
belongs to $V(\mR)$ and, therefore, has finite limits at $\pm\infty$, which will
be denoted by $\fa(t,\pm\infty)$. Now we explain how to extend the function
$\fa$ to $\Delta\times\overline{\mR}$.
%%%-----------------------------------------------------------------------------
\begin{lemma}[{\cite[Lemma~3.5]{KKL14}}]
\label{le:values}
Let $s\in\{0,\infty\}$ and $\fa\in\cE(\mR_+,V(\mR))$. For each $\xi\in M_s(SO(\mR_+))$
there is a sequence $\{t_n\}\subset\mR_+$ such that $t_n\to s$ as $n\to\infty$
and a function $\fa(\xi,\cdot)\in V(\mR)$ such that
\[
\fa(\xi,x)=\lim_{n\to\infty}\fa(t_n,x)\quad\mbox{for every}\quad x\in\overline{\mR}.
\]
\end{lemma}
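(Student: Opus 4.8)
The plan is to realize the values $\fa(\xi,\cdot)$ as honest limits of $\fa(t_n,\cdot)$ along one sequence, built from two ingredients: the weak-star density of $\mR_+$ in $\Delta$ (which produces a net) and the equicontinuity encoded in the definition of $\cE(\mR_+,V(\mR))$ (which lets me replace the net by a sequence and control the limit). First I would record that for each fixed $x\in\overline{\mR}$ the scalar function $t\mapsto\fa(t,x)$ belongs to $SO(\mR_+)$: since $\fa\in SO(\mR_+,V(\mR))$ is bounded and continuous into $V(\mR)$, and $|\fa(t,x)-\fa(\tau,x)|\le\|\fa(t,\cdot)-\fa(\tau,\cdot)\|_{L^\infty(\mR)}$ for finite $x$ (with an analogous bound on the endpoint values $\fa(t,\pm\infty)$, obtained by letting $x\to\pm\infty$), the slow-oscillation property of $\fa$ is inherited by each $\fa(\cdot,x)$. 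Hence $\fa(\cdot,x)$ extends to a continuous function on $M(SO(\mR_+))\supset\Delta$, so that $\xi(\fa(\cdot,x))$ is well defined for $\xi\in M_s(SO(\mR_+))$.

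Next I would carry out the net-to-sequence passage. By \cite[Proposition~2.1]{K09}, $\xi\in\Delta$ lies in the weak-star closure of $\mR_+$, so there is a net $t_\lambda\in\mR_+$ with $g(t_\lambda)\to\xi(g)$ for every $g\in SO(\mR_+)$; testing against $g(t)=t/(1+t)\in C(\overline{\mR}_+)$ shows $t_\lambda\to s$. Fix a countable dense set $D\subset\mR$ and enumerate $D\cup\{-\infty,+\infty\}=\{x_k\}_{k\ge1}$. Mapping $t\mapsto\big(t,(\fa(t,x_k))_{k\ge1}\big)$ into the compact metrizable space $\overline{\mR}_+\times\prod_{k\ge1}\overline{\{\fa(t,x_k):t\in\mR_+\}}$, the net converges to $\big(s,(\xi(\fa(\cdot,x_k)))_{k\ge1}\big)$; since the target is metrizable its closure is sequential, so I may extract a sequence $t_n\to s$ with $\fa(t_n,x_k)\to\xi(\fa(\cdot,x_k))$ for every $k$. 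In particular convergence at $x=\pm\infty$ is secured. The defining estimate of $\cE(\mR_+,V(\mR))$, via $\|\cdot\|_{L^\infty(\mR)}\le\|\cdot\|_V$, gives uniform equicontinuity of $\{\fa(t,\cdot):t\in\mR_+\}$ on $\mR$; together with convergence on the dense set $D$ this forces $\fa(t_n,x)$ to be Cauchy, hence convergent, for every finite $x$. I then set $\fa(\xi,x):=\lim_{n\to\infty}\fa(t_n,x)$ for all $x\in\overline{\mR}$, which yields the asserted pointwise convergence.

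It remains to prove $\fa(\xi,\cdot)\in V(\mR)$, and this is where I expect the main obstacle. Finite total variation is immediate from lower semicontinuity of the variation under pointwise convergence, because $\sup_t V(\fa(t,\cdot))\le\|\fa\|_{C_b(\mR_+,V(\mR))}<\infty$. Absolute continuity is the delicate point, and precisely here one must use membership in $\cE(\mR_+,V(\mR))$ rather than merely in $SO(\mR_+,V(\mR))$: the translation estimate $\sup_t\|\fa(t,\cdot)-\fa^h(t,\cdot)\|_V\to0$ as $|h|\to0$ says that the family $\{\partial_x\fa(t,\cdot):t\in\mR_+\}\subset L^1(\mR)$ is bounded and has uniformly small translations, since $\int_\mR|\partial_x\fa(t,x)-\partial_x\fa(t,x+h)|\,dx\le\|\fa(t,\cdot)-\fa^h(t,\cdot)\|_V$. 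Approximating each $\partial_x\fa(t,\cdot)$ in $L^1$ by its convolution with an approximate identity, uniformly in $t$, then converts this into a uniform modulus of absolute continuity, that is, $\sup_t\int_E|\partial_x\fa(t,x)|\,dx$ is small whenever $|E|$ is small. Passing this bound through the pointwise limit in
\[
\sum_k|\fa(\xi,b_k)-\fa(\xi,a_k)|
=\lim_{n\to\infty}\sum_k\Big|\int_{a_k}^{b_k}\partial_x\fa(t_n,x)\,dx\Big|
\le\lim_{n\to\infty}\int_{\bigcup_k(a_k,b_k)}|\partial_x\fa(t_n,x)|\,dx,
\]
taken over finite families of disjoint intervals with $\sum_k(b_k-a_k)$ small, establishes absolute continuity of $\fa(\xi,\cdot)$, whence $\fa(\xi,\cdot)\in V(\mR)$. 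The values $\fa(\xi,\pm\infty)$ require no further work, being supplied directly by the extraction in the previous paragraph.
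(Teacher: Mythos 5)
Your proposal is correct. There is, however, nothing in this paper to compare it against line by line: Lemma~\ref{le:values} is stated here by citation only, imported from \cite[Lemma~3.5]{KKL14} without reproducing a proof, so your argument serves as a self-contained substitute. Its skeleton is the natural one for this statement: (i) each scalar function $\fa(\cdot,x)$, $x\in\overline{\mR}$, lies in $SO(\mR_+)$ (your pointwise bound by the $L^\infty$-norm is legitimate because functions in $V(\mR)$ are continuous), so $\xi(\fa(\cdot,x))$ makes sense; (ii) a single sequence $t_n\to s$ realizing $\xi$ on countably many such functions at once is extracted --- your embedding into a compact metrizable product is a clean replacement for the usual diagonal argument over weak-star neighbourhoods of $\xi$, with the same effect; (iii) the $\cE$-condition gives uniform equicontinuity of $\{\fa(t,\cdot)\}_{t\in\mR_+}$, upgrading convergence on a countable dense set to all finite $x$; (iv) the limit belongs to $V(\mR)$ by lower semicontinuity of the variation together with uniform $L^1$-equiintegrability of $\{\partial_x\fa(t,\cdot)\}_{t\in\mR_+}$, which you correctly extract from the translation estimate by mollification. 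Your diagnosis that absolute continuity of the limit is exactly where membership in $\cE(\mR_+,V(\mR))$, rather than merely in $SO(\mR_+,V(\mR))$, is indispensable is the right one. One trivial repair: in your last display the final $\lim$ should be a $\limsup$, since that limit need not exist; the estimate is unaffected.

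One point deserves emphasis, because your closing sentence (``the values $\fa(\xi,\pm\infty)$ require no further work'') silently settles a genuine subtlety in the right way. The numbers $\fa(\xi,\pm\infty):=\lim_n\fa(t_n,\pm\infty)$ cannot in general be recovered as $\lim_{x\to\pm\infty}\fa(\xi,x)$: take $\fa(t,x)=\psi(x-\ell(t))$ with $\psi$ a smooth monotone step function ($\psi(-\infty)=0$, $\psi(+\infty)=1$) and $\ell(t)=\log\log t$ for large $t$, constant near $0$; this lies in $\cE(\mR_+,V(\mR))$ because $\sup_{t,\tau\in[r,2r]}|\ell(t)-\ell(\tau)|\to0$ as $r\to\infty$ and translations of $\psi$ move continuously in $V(\mR)$, yet every sequence $t_n\to\infty$ gives $\fa(t_n,x)\to0$ for each finite $x$ while $\fa(t_n,+\infty)\equiv1$. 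So the extension of a symbol to $\Delta\times\overline{\mR}$ is in general discontinuous at $x=\pm\infty$, the lemma must be read with $\fa(\xi,\pm\infty)$ \emph{defined} by the extraction (which is also how these values enter the Fredholm criterion of Theorem~\ref{th:Fredholmness-PDO}), and your decision to adjoin $\pm\infty$ to the countable set and secure those limits separately, instead of attempting to deduce them from the finite-$x$ limits, is not merely convenient but necessary.
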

%%%-------------------------------------------------------------------------------
To study the Fredholmness of Mellin pseudodifferential operators, we need the
Banach algebra $\widetilde{\cE}(\mR_+,V(\mR))$ consisting of all functions
$\fa$ belonging to $\cE(\mR_+,V(\mR))$ and such that
%%%
\[
\lim_{m\to\infty}\sup_{t\in\mR_+}\int_{\mR\setminus[-m,m]}
\left|\partial_x \fa(t,x)\right|\,dx=0.
\]
%%%-----------------------------------------------------------------------------
\begin{theorem}[{\cite[Theorem~3.6]{KKL14}}]
\label{th:Fredholmness-PDO}
If $\fa\in\widetilde{\cE}(\mR_+,V(\mR))$, then the Mellin pseudodifferential
operator $\operatorname{Op}(\fa)$ is Fredholm on the space $L^p(\mR_+,d\mu)$
if and only if
%%%
\begin{equation}\label{eq:Fredholmness-PDO}
\fa(t,\pm\infty)\ne 0
\ \text{ for all }\
t\in\mR_+,
\quad
\fa(\xi,x)\ne 0
\ \text{ for all }\
(\xi,x)\in\Delta\times\overline{\mR}.
\end{equation}
%%%
In the case of Fredholmness
\[
\Ind \operatorname{Op}(\fa)
=
\lim_{\tau\to+\infty}\frac{1}{2\pi}
\big\{\arg\fa(t,x)\big\}_{(t,x)\in\partial\Pi_\tau},
\]
where $\Pi_\tau=[\tau^{-1},\tau]\times\overline{\mR}$ and
$\{\arg\fa(t,x)\}_{(t,x)\in\partial\Pi_\tau}$ denotes the increment of the function
$\arg\fa(t,x)$ when the point $(t,x)$ traces the boundary $\partial\Pi_\tau$ of
$\Pi_\tau$ counter-clockwise.
\end{theorem}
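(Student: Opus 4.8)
The plan is to transport the entire problem from the Mellin picture on $L^p(\mR_+,d\mu)$ to the Fourier picture on $L^p(\mR)$ via the isometric isomorphism $E$ of \eqref{eq:def-E} and then to invoke the Fredholm theory of Fourier pseudodifferential operators with slowly oscillating $V(\mR)$-valued symbols of limited smoothness developed in \cite{K06,K06-IWOTA,K09}. Under the map $A\mapsto E^{-1}AE$, the Mellin PDO $\Op(\fa)$ with symbol $\fa(t,x)$ corresponds to a Fourier PDO with symbol $\fa(e^y,x)$; the algebra $\widetilde{\cE}(\mR_+,V(\mR))$ is carried onto the corresponding Fourier symbol algebra, the fibers $M_s(SO(\mR_+))$ over $s\in\{0,\infty\}$ onto the fibers over $\pm\infty$ in the exponential variable, and the frequency endpoints $x=\pm\infty$ are preserved. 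After this translation the statement coincides with the Fredholm criterion and index formula of \cite{K09}, so the substance of the argument is to run the localization machinery using only the three structural facts already at hand: boundedness (Theorem~\ref{th:boundedness-PDO}), the compactness criterion (Theorem~\ref{th:compactness-PDO}), and compactness of semi-commutators (Theorem~\ref{th:comp-semi-commutators-PDO}). The extra decay condition built into $\widetilde{\cE}(\mR_+,V(\mR))$, as opposed to $\cE(\mR_+,V(\mR))$, is precisely what makes the localization in the frequency variable well behaved, producing the clean split between the interior condition $\fa(t,\pm\infty)\ne 0$ and the boundary condition on $\Delta\times\overline{\mR}$.

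For the sufficiency of \eqref{eq:Fredholmness-PDO} I would pass to the Calkin algebra and use the product rule $\Op(\fa)\Op(\fb)\simeq\Op(\fa\fb)$ to reduce Fredholmness of $\Op(\fa)$ to invertibility of the coset of $\fa$ in a commutative quotient algebra. The natural instrument is an Allan–Douglas type local principle, with the central subalgebra generated by $SO(\mR_+)$ acting by multiplication together with $C(\overline{\mR})$ in the frequency variable; its maximal ideal space is exactly $(\Delta\cup\mR_+)\times\overline{\mR}$, refined at each interior point $t\in\mR_+$ to the two frequency endpoints $\pm\infty$. Over a boundary fiber $\xi\in\Delta$ the operator $\Op(\fa)$ is locally equivalent to the Mellin convolution $\Co(\fa(\xi,\cdot))$, with $\fa(\xi,\cdot)$ supplied by Lemma~\ref{le:values}, while over an interior point with frequency localized to $\pm\infty$ it reduces to multiplication by $\fa(t,\pm\infty)$; local invertibility of each model is governed precisely by the non-vanishing conditions \eqref{eq:Fredholmness-PDO}. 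Gluing the local regularizers with a smooth partition of unity and absorbing the error terms by Theorems~\ref{th:compactness-PDO}--\ref{th:comp-semi-commutators-PDO} then produces a two-sided regularizer, so $\Op(\fa)$ is Fredholm.

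For the necessity I would argue contrapositively: if $\fa(t_0,\pm\infty)=0$ for some interior $t_0$, or $\fa(\xi_0,x_0)=0$ for some boundary point, the corresponding local model fails to be invertible, and one constructs a singular (Weyl) sequence of functions concentrated in the appropriate space–frequency region with $\|f_n\|=1$, $f_n\rightharpoonup 0$, and $\|\Op(\fa)f_n\|\to 0$, which precludes Fredholmness. For the index formula I would exploit stability of the index under homotopies of the now non-vanishing symbol together with the product rule, deforming $\fa$ within the invertible class into a normal form concentrated near $\partial\Pi_\tau$, and then identify $\Ind\Op(\fa)$ with the winding number $\frac{1}{2\pi}\{\arg\fa\}_{\partial\Pi_\tau}$ in the limit $\tau\to+\infty$; this limit exists and stabilizes because slow oscillation forces the contributions of the vertical sides $t\in\{\tau^{-1},\tau\}$ to converge.

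The step I expect to be the main obstacle is the construction of the regularizer and the verification that every commutator and remainder term is compact. The limited smoothness of the symbols deprives us of the full symbolic calculus—asymptotic expansions and parametrix constructions—available in the classical $C^\infty$ H\"ormander–Grushin setting recalled in \eqref{eq:Hoermander}--\eqref{eq:Grushin}; here essentially the only calculus rule available is the single semi-commutator identity of Theorem~\ref{th:comp-semi-commutators-PDO}, so each approximation must be engineered to fit that rule together with the compactness criterion of Theorem~\ref{th:compactness-PDO}. This is exactly where the bulk of the technical effort of \cite{K06,K06-IWOTA,K09} is concentrated, and it is the part that would require the most care to reproduce in the Mellin setting.
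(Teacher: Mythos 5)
Your proposal matches the paper's treatment of this statement: the paper gives no internal proof of Theorem~\ref{th:Fredholmness-PDO} but imports it verbatim from \cite[Theorem~3.6]{KKL14}, remarking only that it ``follows from \cite[Theorem~4.3]{K09}'' via the transformation $A\mapsto E^{-1}AE$ between the Fourier and Mellin settings --- which is exactly the reduction in your first paragraph. Your remaining paragraphs sketch the localization, Weyl-sequence, and homotopy machinery internal to \cite{K06,K06-IWOTA,K09}; that material lies inside the cited works and is not reproduced (or needed) in the paper itself, so your account is consistent with, and strictly more detailed than, what the paper does.
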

%%%--------------------------------------------------------------------------------------
This result follows from \cite[Theorem~4.3]{K09}. Note that for infinitely differentiable
slowly oscillating symbols such result was obtained earlier in \cite[Theorem~2.6]{R98}.
%%%--------------------------------------------------------------------------------------
\section{The operator $A_{\alpha,\gamma}$ is similar to a Mellin PDO $\Op(\fg_{\omega,\gamma})$ \\
with symbol in the algebra $\widetilde{\cE}(\mR_+,V(\mR))$}
\label{sec:similarity}
In this section we show that the operator $A_{\alpha,\gamma}$ given by \eqref{eq:operator-A}
satisfies the relation $A_{\alpha,\gamma}\simeq\Phi^{-1}\Op(\fg_{\omega,\gamma})\Phi$, where
$\fg_{\omega,\gamma}$ is an explicitly given function in the algebra $\widetilde{\cE}(\mR_+,V(\mR))$.
Hence the operator $A_{\alpha,\gamma}$ is Fredholm if and only if $\Op(\fg_{\omega,\gamma})$
is Fredholm and $\Ind A_{\alpha,\gamma}=\Ind\Op(\fg_{\omega,\gamma})$.
%%%--------------------------------------------------------------------------------------
\subsection{Some important functions in the algebra $\widetilde{\cE}(\mR_+,V(\mR))$}
The following statement for $\gamma=0$ was proved in \cite[Lemma~7.1]{KKL11a} and
\cite[Lemma~4.2]{KKL14}. For $\gamma\in\mC$ satisfying $0<1/p+\Re\gamma<1$ the proof is the same.
%%%---------------------------------------------------------------------------------------
\begin{lemma}\label{le:ff-fs-fr}
Suppose $f\in SO(\mR_+)$, $1<p<\infty$, and $\gamma\in\mC$ is such that $0<1/p+\Re\gamma<1$.
Then the functions
\[
\ff(t,x):=f(t),
\quad
\fs_\gamma(t,x):=s_\gamma(x),
\quad
\fr_\gamma(t,x):=r_\gamma(x),
\quad
(t,x)\in\mR_+\times\mR,
\]
belong to the Banach algebra $\widetilde{\cE}(\mR_+,V(\mR))$.
\end{lemma}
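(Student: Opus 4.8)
The plan is to verify that each of the three functions lies in $\widetilde{\cE}(\mR_+,V(\mR))$ by checking, in turn, membership in $V(\mR)$ fiberwise, slow oscillation in $t$, the uniform shift-continuity condition defining $\cE(\mR_+,V(\mR))$, and finally the decay-of-the-derivative-tail condition that distinguishes $\widetilde{\cE}$. Since the three cases decouple, I would treat them separately, exploiting that $\fs_\gamma$ and $\fr_\gamma$ are independent of $t$ while $\ff$ is independent of $x$.

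For the two symbols $\fs_\gamma(t,x)=s_\gamma(x)$ and $\fr_\gamma(t,x)=r_\gamma(x)$, the fact that they do not depend on $t$ trivializes the slow-oscillation condition (the difference $\fa(t,\cdot)-\fa(\tau,\cdot)$ is identically zero) and the $\cE$-condition reduces to ordinary translation-continuity in $V(\mR)$ of the single functions $s_\gamma,r_\gamma$. So the whole burden is to show $s_\gamma,r_\gamma\in V(\mR)$, that translation is continuous on them in the $V$-norm, and that $\int_{|x|\ge m}|s_\gamma'(x)|\,dx$ and $\int_{|x|\ge m}|r_\gamma'(x)|\,dx$ tend to $0$ as $m\to\infty$. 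First I would write $s_\gamma(x)=\coth[\pi(x+i/p+i\gamma)]$ and $r_\gamma(x)=1/\sinh[\pi(x+i/p+i\gamma)]$ and observe that, because $0<1/p+\Re\gamma<1$, the arguments $\pi(x+i/p+i\gamma)$ avoid the poles of $\coth$ and $1/\sinh$ (which sit where the imaginary part is an integer multiple of $\pi i$), so both functions are smooth on all of $\mR$. As $x\to\pm\infty$ one has exponential convergence: $\coth\to\pm1$ and $1/\sinh\to0$, with derivatives decaying like $e^{-2\pi|x|}$. This exponential decay of the derivatives instantly gives finiteness of $V(s_\gamma)=\int|s_\gamma'|$ and $V(r_\gamma)=\int|r_\gamma'|$ (the integrand is continuous and exponentially small at infinity), continuity of translation in $V(\mR)$ (by dominated convergence applied to $\int|s_\gamma'(x+h)-s_\gamma'(x)|\,dx$ and likewise for $r_\gamma$), and the tail condition defining $\widetilde{\cE}$ (the tail integrals are bounded by $\int_{|x|\ge m}Ce^{-2\pi|x|}\,dx\to0$). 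Boundedness of $s_\gamma,r_\gamma$ themselves is clear from their explicit limits, so $\|\cdot\|_V<\infty$ and we are done with these two.

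For $\ff(t,x)=f(t)$ with $f\in SO(\mR_+)$, the roles reverse: the symbol is constant in $x$, so $\ff(t,\cdot)$ is the constant function $f(t)\in V(\mR)$ with $V(\ff(t,\cdot))=0$ and $\|\ff(t,\cdot)\|_V=|f(t)|\le\|f\|_{L^\infty(\mR_+)}<\infty$; thus $\ff\in C_b(\mR_+,V(\mR))$. The slow-oscillation condition $\lim_{r\to s}\max_{t,\tau\in[r,2r]}\|\ff(t,\cdot)-\ff(\tau,\cdot)\|_{L^\infty(\mR)}=0$ becomes exactly $\lim_{r\to s}\max_{t,\tau\in[r,2r]}|f(t)-f(\tau)|=0$, which is precisely Sarason's slow-oscillation hypothesis on $f$ (with $\lambda=1/2$), so $\ff\in SO(\mR_+,V(\mR))$. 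The $\cE$-condition is immediate since $\ff^h(t,x)=\ff(t,x+h)=f(t)=\ff(t,x)$, whence $\|\ff(t,\cdot)-\ff^h(t,\cdot)\|_V=0$ for every $h$; and the $\widetilde{\cE}$ tail condition is vacuous because $\partial_x\ff\equiv0$. Hence $\ff\in\widetilde{\cE}(\mR_+,V(\mR))$.

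The only genuinely nontrivial point is the analytic behavior of $s_\gamma$ and $r_\gamma$, namely confirming that the strip constraint $0<1/p+\Re\gamma<1$ keeps us away from the singularities and yields the uniform exponential decay of the derivatives; once that is in hand every required estimate follows by elementary calculus and dominated convergence. I therefore expect the main (though still modest) obstacle to be the careful bookkeeping of these exponential bounds on $s_\gamma'$ and $r_\gamma'$ across the relevant horizontal strip; the slow-oscillation and shift-continuity checks for $\ff$ are formal, as noted. Since the case $\gamma=0$ was already established in \cite[Lemma~7.1]{KKL11a} and \cite[Lemma~4.2]{KKL14}, and the argument depends on $\gamma$ only through the location of the line $\Im=1/p+\Re\gamma$ inside the pole-free strip $(0,1)$, the general case follows by the identical reasoning.
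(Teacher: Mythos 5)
Your proof is correct and takes essentially the same route as the paper: the paper offers no new argument for general $\gamma$, simply noting that the $\gamma=0$ proofs of \cite[Lemma~7.1]{KKL11a} and \cite[Lemma~4.2]{KKL14} carry over verbatim because the strip condition $0<1/p+\Re\gamma<1$ keeps $\pi(x+i/p+i\gamma)$ away from the poles of $\coth$ and $1/\sinh$ --- precisely the point on which your direct verification (fiberwise membership in $V(\mR)$, trivial slow oscillation, translation continuity, and the tail condition) turns, and which you also state explicitly in your closing paragraph. The only blemish is a harmless quantitative slip: $r_\gamma'$ decays like $e^{-\pi|x|}$ rather than $e^{-2\pi|x|}$ (only $s_\gamma'$ has the faster rate), which affects none of the integrability, dominated-convergence, or tail estimates.
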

%%%--------------------------------------------------------------------------------------
The next lemma for $\gamma=0$ was proved in \cite[Lemma~4.3]{KKL14} (see also \cite[Lemmas~7.3--7.4]{KKL11a}).
For arbitrary $\gamma\in\mC$ satisfying $0<1/p+\Re\gamma<1$ the arguments remain unalterated.
%%%--------------------------------------------------------------------------------------
\begin{lemma}\label{le:fb}
Suppose $\omega\in SO(\mR_+)$ is a real-valued function, $1<p<\infty$, and $\gamma\in\mC$ is
such that $0<1/p+\Re\gamma<1$. Then the function
\[
\fb_{\omega,\gamma}(t,x):=e^{i\omega(t)x}r_\gamma(x),
\quad (t,x)\in\mR_+\times\mR,
\]
belongs to the Banach algebra $\widetilde{\cE}(\mR_+,V(\mR))$ and there
is a positive constant $C(p,\gamma)$ depending only on $p$ and $\gamma$ such that
\[
\|\fb_{\omega,\gamma}\|_{C_b(\mR_+,V(\mR))}
\le
C(p,\gamma)\left(1+\sup_{t\in\mR_+}|\omega(t)|\right).
\]
\end{lemma}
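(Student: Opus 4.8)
The plan is to verify, straight from the explicit formula $\fb_{\omega,\gamma}(t,x)=e^{i\omega(t)x}r_\gamma(x)$, the four nested defining conditions of $\widetilde{\cE}(\mR_+,V(\mR))$. The starting observation is that $r_\gamma$ is smooth and, since $\sinh[\pi(x+i/p+i\gamma)]$ has no real zero (its argument has imaginary part $\pi(1/p+\Re\gamma)\in(0,\pi)$) and grows exponentially as $x\to\pm\infty$, both $r_\gamma$ and $r_\gamma'$ decay exponentially. Hence $r_\gamma,r_\gamma'\in L^1(\mR)\cap L^\infty(\mR)$, the function $xr_\gamma(x)$ is bounded, and $V(r_\gamma)=\|r_\gamma'\|_{L^1(\mR)}<\infty$. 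Since $\omega\in SO(\mR_+)$ is bounded, I would set $\Omega:=\sup_{t\in\mR_+}|\omega(t)|<\infty$; everything below is controlled by fixed constants built from $\Omega$ and the $L^1$- and $L^\infty$-norms of $r_\gamma$ and $r_\gamma'$, which depend only on $p$ and $\gamma$.

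First I would establish membership in $C_b(\mR_+,V(\mR))$ together with the norm estimate. Differentiating in $x$ gives $\partial_x\fb_{\omega,\gamma}(t,x)=i\omega(t)e^{i\omega(t)x}r_\gamma(x)+e^{i\omega(t)x}r_\gamma'(x)$, whence $|\partial_x\fb_{\omega,\gamma}(t,x)|\le|\omega(t)|\,|r_\gamma(x)|+|r_\gamma'(x)|$ and $\|\fb_{\omega,\gamma}(t,\cdot)\|_V\le\|r_\gamma\|_V+|\omega(t)|\,\|r_\gamma\|_{L^1(\mR)}$; taking the supremum over $t$ yields the asserted bound with $C(p,\gamma)=\max\{\|r_\gamma\|_V,\|r_\gamma\|_{L^1(\mR)}\}$. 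For continuity of $t\mapsto\fb_{\omega,\gamma}(t,\cdot)$ in the $V$-norm I would use continuity of $\omega$ together with continuity of $a\mapsto e^{ia\cdot}r_\gamma$ from $\mR$ into $V(\mR)$, the latter following from dominated convergence with the integrable majorants $r_\gamma$ and $r_\gamma'$.

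Next I would check the remaining three properties. The $L^\infty$-slow oscillation is the easiest: the bound $|e^{i\omega(t)x}-e^{i\omega(\tau)x}|\le|\omega(t)-\omega(\tau)|\,|x|$ gives $\|\fb_{\omega,\gamma}(t,\cdot)-\fb_{\omega,\gamma}(\tau,\cdot)\|_{L^\infty(\mR)}\le|\omega(t)-\omega(\tau)|\,\|xr_\gamma(x)\|_{L^\infty(\mR)}$, which tends to $0$ on $[r,2r]$ as $r\to s\in\{0,\infty\}$ because $\omega\in SO(\mR_+)$, placing $\fb_{\omega,\gamma}$ in $SO(\mR_+,V(\mR))$. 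For the translation-equicontinuity I would write $a:=\omega(t)$ and split $\fb_{\omega,\gamma}(t,x)-\fb_{\omega,\gamma}(t,x+h)=e^{iax}[r_\gamma(x)-r_\gamma(x+h)]+e^{iax}r_\gamma(x+h)[1-e^{iah}]$, handling $\partial_x$ of this difference by the same device; with $|1-e^{iah}|\le\Omega|h|$ one bounds $\|\fb_{\omega,\gamma}(t,\cdot)-\fb_{\omega,\gamma}(t,\cdot+h)\|_V$, uniformly in $t$, by a fixed multiple of $\|r_\gamma-r_\gamma(\cdot+h)\|_{L^1(\mR)}+\|r_\gamma'-r_\gamma'(\cdot+h)\|_{L^1(\mR)}+|h|$.

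Finally the tail condition follows at once from $|\partial_x\fb_{\omega,\gamma}(t,x)|\le\Omega|r_\gamma(x)|+|r_\gamma'(x)|$, since the $L^1(\mR)$-tails of $r_\gamma$ and $r_\gamma'$ vanish as $m\to\infty$ uniformly in $t$, giving $\fb_{\omega,\gamma}\in\widetilde{\cE}(\mR_+,V(\mR))$. I expect the main obstacle to be the translation-equicontinuity step: the factor $\omega(t)$ appearing in the $x$-derivative must be absorbed uniformly in $t$, and this is exactly where the boundedness of $\omega$ and the continuity of translation in $L^1(\mR)$ applied to both $r_\gamma$ and $r_\gamma'$ — integrable precisely because $0<1/p+\Re\gamma<1$ prevents $\sinh$ from vanishing — are essential.
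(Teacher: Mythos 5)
Your proof is correct, and it is worth noting that the paper itself offers no argument here at all: it states that the lemma was proved for $\gamma=0$ in \cite[Lemma~4.3]{KKL14} (see also \cite[Lemmas~7.3--7.4]{KKL11a}) and asserts that ``the arguments remain unalterated'' for general $\gamma$ with $0<1/p+\Re\gamma<1$. Your self-contained verification supplies precisely the content behind that assertion. The structure is the one the cited proofs use as well --- check the four nested conditions ($C_b$, slow oscillation, translation equicontinuity, tail decay) directly from the formula $\fb_{\omega,\gamma}(t,x)=e^{i\omega(t)x}r_\gamma(x)$ --- and your key observation is exactly the reason the generalization is painless: the condition $0<1/p+\Re\gamma<1$ guarantees $|\sinh[\pi(x+i/p+i\gamma)]|^2=\sinh^2[\pi(x-\Im\gamma)]+\sin^2[\pi(1/p+\Re\gamma)]$ is bounded away from zero and grows exponentially, so $r_\gamma$, $r_\gamma'$ lie in $L^1(\mR)\cap L^\infty(\mR)$ with all relevant norms depending only on $p$ and $\gamma$, after which every estimate from the $\gamma=0$ case goes through verbatim. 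Your individual steps are all sound: the $V$-norm bound $\|\fb_{\omega,\gamma}(t,\cdot)\|_V\le\|r_\gamma\|_V+|\omega(t)|\,\|r_\gamma\|_{L^1(\mR)}$ yields the stated constant $C(p,\gamma)$; the inequality $|e^{i\omega(t)x}-e^{i\omega(\tau)x}|\le|\omega(t)-\omega(\tau)|\,|x|$ combined with the boundedness of $xr_\gamma(x)$ gives the $L^\infty$ slow-oscillation condition; the splitting $\fb_{\omega,\gamma}(t,x)-\fb_{\omega,\gamma}(t,x+h)=e^{i\omega(t)x}[r_\gamma(x)-r_\gamma(x+h)]+e^{i\omega(t)x}r_\gamma(x+h)[1-e^{i\omega(t)h}]$ together with $|1-e^{i\omega(t)h}|\le\Omega|h|$ and $L^1$-continuity of translation (applied to both $r_\gamma$ and $r_\gamma'$) handles the $\cE$-condition uniformly in $t$; and the tail condition for $\widetilde{\cE}$ is immediate from $|\partial_x\fb_{\omega,\gamma}(t,x)|\le\Omega|r_\gamma(x)|+|r_\gamma'(x)|$. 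You also remembered to verify continuity of $t\mapsto\fb_{\omega,\gamma}(t,\cdot)$ in the $V$-norm, a point that is easy to overlook since it is part of membership in $C_b(\mR_+,V(\mR))$.
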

%%%--------------------------------------------------------------------------------------
\subsection{Operators $U_\alpha R_\gamma$ and $U_\alpha^{-1}R_\gamma$}
Let $C^1(\mR_+)$ denote the set of all continuously differentiable functions on $\mR_+$.
We start with the important characterization of orientation-preserving
slowly oscillating shifts.
%%%---------------------------------------------------------------------------------------
\begin{lemma}[{\cite[Lemma~2.2]{KKL11a}}]
\label{le:exponent-shift}
An orientation-preserving shift $\alpha:\mR_+\to\mR_+$ belongs to $SOS(\mR_+)$
if and only if
\[
\alpha(t)=te^{\omega (t)},\quad t\in \mR_+,
\]
for some real-valued function $\omega\in SO(\mR_+)\cap C^1(\mR_+)$ such that
$\psi(t):= t\omega^\prime(t)$ also belongs to $SO(\mR_+)$ and
\[
\inf_{t\in\mR_+}\big(1+t\omega'(t)\big)>0.
\]
\end{lemma}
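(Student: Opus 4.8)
The plan is to prove the two implications separately, built around the single identity obtained by differentiating $\alpha(t)=te^{\omega(t)}$, namely $\alpha'(t)=e^{\omega(t)}\bigl(1+t\omega'(t)\bigr)=e^{\omega(t)}\bigl(1+\psi(t)\bigr)$, or equivalently $1+\psi(t)=\alpha'(t)e^{-\omega(t)}=t\alpha'(t)/\alpha(t)$. Throughout I would use freely that $SO(\mR_+)$ is a commutative $C^*$-algebra, hence closed under sums, products, and composition with functions continuous on the compact range of a given real-valued element; in particular $e^{\pm\omega}$ and $\log(1+\psi)$ lie in $SO(\mR_+)$ whenever $\omega,\psi\in SO(\mR_+)$ with $\inf_t(1+\psi(t))>0$.

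For the implication ``$\Leftarrow$'', suppose $\omega\in SO(\mR_+)\cap C^1(\mR_+)$ is real-valued with $\psi=t\omega'\in SO(\mR_+)$ and $\inf_t(1+\psi(t))>0$, and set $\alpha(t)=te^{\omega(t)}$. Since $\omega$ is bounded, $e^{-\|\omega\|_\infty}t\le\alpha(t)\le e^{\|\omega\|_\infty}t$, so $\alpha(0^+)=0$ and $\alpha(+\infty)=+\infty$; as $\alpha'=e^{\omega}(1+\psi)$ is continuous and strictly positive, $\alpha$ is an orientation-preserving $C^1$-diffeomorphism of $\mR_+$ onto itself. Boundedness of $\log\alpha'=\omega+\log(1+\psi)$ follows from boundedness of $\omega$ and of $\log(1+\psi)$, while $\alpha'=e^{\omega}(1+\psi)\in SO(\mR_+)$ because $SO(\mR_+)$ is an algebra closed under the exponential. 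Hence $\alpha\in SOS(\mR_+)$. This direction is routine.

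The implication ``$\Rightarrow$'' is where the real work lies. Given $\alpha\in SOS(\mR_+)$, write $\omega(t)=\log(\alpha(t)/t)$; then $\omega\in C^1(\mR_+)$ automatically. From $|\log\alpha'|\le M$ we get $e^{-M}\le\alpha'\le e^M$, and integrating $\alpha(t)=\int_0^t\alpha'(s)\,ds$ (using $\alpha(0^+)=0$) yields $e^{-M}\le\alpha(t)/t\le e^M$, so $\omega$ is bounded. The crucial step is to show $\omega\in SO(\mR_+)$. For this I would pass to the logarithmic variable $t=e^x$, put $B(u):=\alpha'(e^u)$ (a bounded function, slowly oscillating on $\mR$ in Sarason's sense at $\pm\infty$), and rewrite the averaged identity as the convolution
\[
\frac{\alpha(e^x)}{e^x}=\int_0^\infty B(x-w)e^{-w}\,dw=(B*k)(x),
\quad
k(w)=e^{-w}\ \text{for } w>0,\ k(w)=0\ \text{for } w\le 0 .
\]
Since $k\in L^1(\mR)$ with total mass $1$, the difference $(B*k)(x)-B(x)=\int_0^\infty\bigl(B(x-w)-B(x)\bigr)e^{-w}\,dw$ tends to $0$ as $x\to\pm\infty$: splitting the integral at a large $W$, the tail is bounded by $2\|B\|_\infty e^{-W}$, while the head is dominated by $\sup_{w\in[0,W]}|B(x-w)-B(x)|$, which tends to $0$ by slow oscillation of $B$ on the bounded window $[0,W]$. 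Thus $e^{\omega}=B*k$ differs from the slowly oscillating function $B$ by a term vanishing at $0$ and $\infty$, so $e^{\omega}\in SO(\mR_+)$; being bounded away from $0$, it may be composed with $\log$, giving $\omega\in SO(\mR_+)$. The remaining conclusions are then immediate: $\psi=\alpha'e^{-\omega}-1\in SO(\mR_+)$ as a product and sum of algebra elements, and $1+\psi=\alpha'e^{-\omega}\ge e^{-M}e^{-\|\omega\|_\infty}>0$ gives $\inf_t(1+\psi(t))>0$.

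The main obstacle is precisely the proof that $\omega\in SO(\mR_+)$ in the forward direction: the definition of $SOS(\mR_+)$ controls only $\alpha'$, whereas $\omega$ depends on $\alpha$ itself, and the naive estimate from $\omega(t)-\omega(\tau)=\int_\tau^t \frac{\psi(s)}{s}\,ds$ is merely bounded, not small. The convolution representation of $\alpha(t)/t$ as a smoothing of $\alpha'$, together with the elementary fact that convolution with an $L^1$-kernel of unit mass perturbs a bounded slowly oscillating function only by a term vanishing at $0$ and $\infty$, is the device that resolves this and decouples $\omega$ from $\psi$.
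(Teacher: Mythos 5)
Your proof is correct. Note that the paper itself offers no proof of this lemma --- it is imported verbatim, with a citation, from \cite[Lemma~2.2]{KKL11a} --- so there is no in-paper argument to compare against; but your argument is sound and is essentially the standard one behind the cited result. The key identity you use, $\alpha(e^x)/e^x=(B*k)(x)$ with $B(u)=\alpha'(e^u)$ and $k(w)=e^{-w}\mathbf{1}_{w>0}$, is exactly the logarithmic-variable form of writing $\alpha(t)/t=\frac1t\int_0^t\alpha'(\tau)\,d\tau$ and estimating
\[
\Big|\frac{\alpha(t)}{t}-\alpha'(t)\Big|\le \frac1t\int_0^{\lambda t}|\alpha'(\tau)-\alpha'(t)|\,d\tau+\sup_{\tau\in[\lambda t,t]}|\alpha'(\tau)-\alpha'(t)|
\le 2\lambda e^{M}+o(1)\quad (t\to s),
\]
i.e.\ your split of the convolution at $W$ corresponds to splitting the average at $\lambda t$ with $\lambda=e^{-W}$; this correctly isolates and resolves the only genuinely nontrivial point, namely that $\omega=\log[\alpha(\cdot)/\cdot]$ (and not just $\alpha'$) slowly oscillates. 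Two minor reading remarks, neither a gap: the fixed-point requirement ($0$ and $\infty$ only) is part of the standing hypothesis that $\alpha$ is a \emph{shift}, so it neither needs to be, nor can be, derived from the representation $\alpha(t)=te^{\omega(t)}$ (e.g.\ $\omega\equiv0$ gives the identity map); and your use of the ``for each $\lambda$'' form of slow oscillation on the window $[0,W]$ is justified by the equivalence of ``for some'' and ``for each'' $\lambda\in(0,1)$ stated in the paper's definition of $SO(\mR_+)$.
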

%%%---------------------------------------------------------------------------
The following statement is crucial for our analysis.
%%%---------------------------------------------------------------------------
\begin{lemma}\label{le:shift-R-gamma-exact}
Suppose $1<p<\infty$ and $\gamma\in\mC$ is such that $0<1/p+\Re\gamma<1$.
Let $\alpha\in SOS(\mR_+)$ and $U_\alpha$ be the associated isometric shift
operator on $L^p(\mR_+)$. Then the operator $U_\alpha R_\gamma$
can be realized as the Mellin pseudodifferential operator:
\[
U_\alpha R_\gamma = \Phi^{-1}\Op (\fc_{\omega,\gamma}) \Phi,
\]
where the function $\fc_{\omega,\gamma}$, given for $(t,x)\in\mR_+\times\mR$ by
%%%
\begin{align}\label{eq:shift-R-gamma-exact-1}
\fc_{\omega,\gamma}(t,x):=(1+t\omega'(t))^{1/p} e^{i\omega(t)x}r_\gamma(x)
\quad\mbox{with}\quad
\omega(t):=\log[\alpha(t)/t],
\end{align}
%%%
belongs to the algebra $\widetilde{\cE}(\mR_+,V(\mR))$.
\end{lemma}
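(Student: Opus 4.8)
The plan is to build $U_\alpha R_\gamma$ out of pieces already available: the Mellin-convolution realization of $R_\gamma$ from Lemma~\ref{le:alg-A} together with an explicit computation of the weighted shift after conjugation by $\Phi$. By Lemma~\ref{le:alg-A} we have $R_\gamma=\Phi^{-1}\Co(r_\gamma)\Phi=\Phi^{-1}\Op(\fr_\gamma)\Phi$, where $\fr_\gamma(t,x)=r_\gamma(x)$ is independent of $t$. Writing $V_\alpha:=\Phi U_\alpha\Phi^{-1}$ for the realization of the shift on $L^p(\mR_+,d\mu)$, we get $U_\alpha R_\gamma=\Phi^{-1}V_\alpha\Op(\fr_\gamma)\Phi$, so it suffices to show $V_\alpha\Op(\fr_\gamma)=\Op(\fc_{\omega,\gamma})$. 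First I would compute $V_\alpha$ explicitly. Using $(\Phi f)(t)=t^{1/p}f(t)$, $U_\alpha f=(\alpha')^{1/p}(f\circ\alpha)$, and $\alpha(t)=te^{\omega(t)}$ (so that $\alpha'(t)=e^{\omega(t)}(1+t\omega'(t))$ by Lemma~\ref{le:exponent-shift}), a direct calculation gives
\[
(V_\alpha g)(t)=\left(\frac{t\alpha'(t)}{\alpha(t)}\right)^{1/p}g(\alpha(t))=(1+t\omega'(t))^{1/p}\,g(\alpha(t)).
\]

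Next, applying $V_\alpha$ to $\Op(\fr_\gamma)h$ for $h\in C_0^\infty(\mR_+)$ and using the scaling identity $(\alpha(t)/\tau)^{ix}=(t/\tau)^{ix}e^{i\omega(t)x}$, I would pull the factors $(1+t\omega'(t))^{1/p}$ and $e^{i\omega(t)x}$ inside the iterated integral \eqref{eq:Mellin-PDO}. This exhibits $V_\alpha\Op(\fr_\gamma)h=\Op(\fc_{\omega,\gamma})h$ with $\fc_{\omega,\gamma}(t,x)=(1+t\omega'(t))^{1/p}e^{i\omega(t)x}r_\gamma(x)$, exactly as in \eqref{eq:shift-R-gamma-exact-1}. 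Since $r_\gamma$ decays exponentially at $\pm\infty$ and $h$ has compact support, the iterated integral converges absolutely, so Fubini legitimizes the manipulation on the dense set $C_0^\infty(\mR_+)$; once $\fc_{\omega,\gamma}$ is known to lie in $C_b(\mR_+,V(\mR))$, both sides are bounded by Theorem~\ref{th:boundedness-PDO}, and the identity extends to all of $L^p(\mR_+,d\mu)$ by density.

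It then remains to verify $\fc_{\omega,\gamma}\in\widetilde{\cE}(\mR_+,V(\mR))$, which I would do by factoring $\fc_{\omega,\gamma}(t,x)=g(t)\,\fb_{\omega,\gamma}(t,x)$, where $g(t):=(1+t\omega'(t))^{1/p}$ and $\fb_{\omega,\gamma}(t,x)=e^{i\omega(t)x}r_\gamma(x)$. By Lemma~\ref{le:exponent-shift}, the function $\psi(t)=t\omega'(t)$ belongs to $SO(\mR_+)$ and $\inf_{t\in\mR_+}(1+\psi(t))>0$; since $SO(\mR_+)$ is a $C^*$-algebra and $1+\psi$ takes values in a compact subinterval of $(0,\infty)$, composition with $y\mapsto y^{1/p}$, which is uniformly continuous there, keeps $g$ in $SO(\mR_+)$. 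Then Lemma~\ref{le:ff-fs-fr} places the $x$-independent symbol $(t,x)\mapsto g(t)$ in $\widetilde{\cE}(\mR_+,V(\mR))$, Lemma~\ref{le:fb} places $\fb_{\omega,\gamma}$ there, and as $\widetilde{\cE}(\mR_+,V(\mR))$ is a Banach algebra, the product $\fc_{\omega,\gamma}$ lies in it as well.

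The genuinely delicate points, as opposed to the algebraic bookkeeping, are twofold. The first is converting the post-composition with $\alpha$ into the symbol factor $e^{i\omega(t)x}$ at the level of honest bounded operators rather than of formal integrals; this is where the scaling identity $(\alpha(t)/\tau)^{ix}=(t/\tau)^{ix}e^{i\omega(t)x}$ does the real work, and it is made rigorous by the density-and-boundedness argument above. The second is verifying $g\in SO(\mR_+)$: here the hypothesis $\inf_{t}(1+t\omega'(t))>0$ from Lemma~\ref{le:exponent-shift} is essential, since it is exactly what keeps $1+t\omega'(t)$ bounded away from the branch point of $y^{1/p}$ and guarantees that taking the $p$-th root does not destroy slow oscillation. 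Everything else is subsumed in the smoothness estimates already packaged in Lemmas~\ref{le:ff-fs-fr} and~\ref{le:fb}.
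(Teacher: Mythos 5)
Your proof is correct, but it reaches the operator identity by a genuinely different route than the paper. The paper computes the composed kernel head-on: it writes $(\Phi U_\alpha R_\gamma\Phi^{-1}f)(t)$ as a single integral operator with kernel $\frac{1}{\pi i}\frac{(t/\tau)^{1/p+\gamma}}{1+e^{\omega(t)}(t/\tau)}$ and then identifies this kernel as an inverse Mellin transform using the table formula for $\int_{\mR_+}\frac{t^{1/p+\gamma}}{1+kt}t^{-ix}\frac{dt}{t}$ (from \cite{EMOT54} or \cite{GR07}) with $k=e^{\omega(t)}$; the factor $e^{i\omega(t)x}$ emerges from $e^{i(x+i/p+i\gamma)\log k}$ after cancellation of $e^{-\omega(t)(1/p+\gamma)}$ against the prefactor. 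You instead factor $\Phi U_\alpha R_\gamma\Phi^{-1}=V_\alpha\Op(\fr_\gamma)$, take the convolution realization $R_\gamma=\Phi^{-1}\Co(r_\gamma)\Phi$ from Lemma~\ref{le:alg-A} as a black box, compute the conjugated shift $V_\alpha$ as the weighted composition $(V_\alpha g)(t)=(1+t\omega'(t))^{1/p}g(\alpha(t))$, and absorb the composition into the symbol via the scaling identity $(\alpha(t)/\tau)^{ix}=(t/\tau)^{ix}e^{i\omega(t)x}$ inside the iterated integral \eqref{eq:Mellin-PDO}. This is more modular and avoids any special-function computation: the multiplicativity of $t^{ix}$ under $t\mapsto te^{\omega(t)}$ does all the work, while the paper in effect re-derives the convolution realization with the parameter $k=e^{\omega(t)}$ built in (which is why its argument is self-contained modulo the table integral, whereas yours delegates that content to the cited Lemma~\ref{le:alg-A}). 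Two cosmetic remarks: no Fubini interchange is actually needed in your key step, since you only rewrite the integrand pointwise and multiply by a factor independent of the integration variables, so absolute convergence of the iterated integral suffices; and the boundedness of the left-hand side $V_\alpha\Op(\fr_\gamma)$ follows from Lemma~\ref{le:alg-A} and the boundedness of $U_\alpha$, with Theorem~\ref{th:boundedness-PDO} needed only for $\Op(\fc_{\omega,\gamma})$. The density-plus-boundedness extension and the membership argument for $\widetilde{\cE}(\mR_+,V(\mR))$ (via $\Omega^{1/p}\in SO(\mR_+)$, Lemmas~\ref{le:ff-fs-fr} and~\ref{le:fb}, and the Banach algebra property) coincide with the paper's.
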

%%%--------------------------------------------------------------------------
\begin{proof}
We follow the proof of \cite[Lemma~4.4]{KKL14}, where this statement was proved
for $\gamma=1/y-1/p$ with $y\in(1,\infty)$ (see also \cite[Lemma~8.3]{KKL11a}).
By Lemma~\ref{le:exponent-shift}, $\alpha(t)=t e^{\omega(t)}$,
where $\omega\in SO(\mR_+)\cap C^1(\mR_+)$ is a real-valued function. Hence
%%%
\begin{equation}\label{eq:shift-R-exact-2}
\alpha'(t)=\Omega(t)e^{\omega(t)},
\quad\mbox{where}\quad
\Omega(t):=1+t\omega'(t),
\quad t\in\mR_+.
\end{equation}
%%%
Assume that $f\in C_0^\infty(\mR_+)$. Taking into account \eqref{eq:shift-R-exact-2},
we have for $t\in\mR_+$,
%%%
\begin{align}
(\Phi U_\alpha R_\gamma\Phi^{-1}f)(t)
&=
\frac{(\alpha'(t))^{1/p}}{\pi i}
\int_{\mR_+}\left(\frac{\alpha(t)}{\tau}\right)^\gamma
\frac{f(\tau)(t/\tau)^{1/p}}{\tau+\alpha(t)}\,d\tau
\nonumber\\
&=
\frac{(\Omega(t))^{1/p}e^{\omega(t)/p}}{\pi i}
\int_{\mR_+}\left(\frac{te^{\omega(t)}}{\tau}\right)^\gamma
\frac{f(\tau)(t/\tau)^{1/p}}{1+e^{\omega(t)}(t/\tau)}\,\frac{d\tau}{\tau}
\nonumber\\
&=
\frac{(\Omega(t))^{1/p}e^{\omega(t)(1/p+\gamma)}}{\pi i}
\int_{\mR_+}
\frac{f(\tau)(t/\tau)^{1/p+\gamma}}{1+e^{\omega(t)}(t/\tau)}\,\frac{d\tau}{\tau}
\nonumber\\
&=(\Omega(t))^{1/p}(I_\gamma f)(t),
\label{eq:shift-R-exact-3}
\end{align}
%%%
where
%%%
\begin{equation}\label{eq:shift-R-exact-4}
(I_\gamma f)(t):=
\frac{e^{\omega(t)(1/p+\gamma)}}{\pi i}
\int_{\mR_+}
\frac{f(\tau)(t/\tau)^{1/p+\gamma}}{1+e^{\omega(t)}(t/\tau)}\,\frac{d\tau}{\tau}.
\end{equation}
%%%
From \cite[formula 6.2.6]{EMOT54} or \cite[formula 3.194.4]{GR07} it follows that for $k>0$,
$\gamma\in\mC$ such that $0<1/p+\Re\gamma<1$, and $x\in\mR$,
%%%
\begin{align*}
\frac{1}{\pi i}\int_{\mR_+}\frac{t^{1/p+\gamma}}{1+kt}t^{-ix}\frac{dt}{t}
&=
\frac{1}{\pi i}\cdot\frac{\pi}{k^{1/p+\gamma-ix}}\cdot\frac{1}{\sin[\pi(1/p+\gamma-ix)]}
\\
&=
\frac{1}{k^{1/p+\gamma-ix}}\cdot\frac{1}{\sinh[\pi(x+i/p+i\gamma)]}
\\
&=
e^{i(x+i/p+i\gamma)\log k}r_\gamma(x).
\end{align*}
%%%
Taking the inverse Mellin transform, we get
%%%
\begin{equation}\label{eq:shift-R-exact-5}
\frac{1}{\pi i}\cdot\frac{t^{1/p+\gamma}}{1+kt}
=
\frac{1}{2\pi}\int_\mR e^{i(x+i/p+i\gamma)\log k}r_\gamma(x)t^{ix}\,dx.
\end{equation}
%%%
From \eqref{eq:shift-R-exact-4}--\eqref{eq:shift-R-exact-5} with $k=e^{\omega(t)}$ we obtain
%%%
\begin{align}
(I_\gamma f)(t)
&=
\frac{e^{\omega(t)(1/p+\gamma)}}{2\pi}
\int_{\mR_+}
\left(\int_{\mR}e^{i\omega(t)(x+i/p+i\gamma)}r_\gamma(x)\left(\frac{t}{\tau}\right)^{ix}\,dx\right)
f(\tau)\,\frac{d\tau}{\tau}
\nonumber\\
&=
\frac{1}{2\pi}
\int_{\mR_+}
\left(\int_{\mR}e^{i\omega(t)x}r_\gamma(x)\left(\frac{t}{\tau}\right)^{ix}\,dx\right)
f(\tau)\,\frac{d\tau}{\tau}
\nonumber\\
&=
\frac{1}{2\pi}\int_\mR dx
\int_{\mR_+}e^{i\omega(t)x}r_\gamma(x)\left(\frac{t}{\tau}\right)^{ix}f(\tau)\frac{d\tau}{\tau}.
\label{eq:shift-R-exact-6}
\end{align}
%%%
From \eqref{eq:shift-R-exact-3} and \eqref{eq:shift-R-exact-6} we obtain for $f\in C_0^\infty(\mR_+)$,
%%%
\begin{equation}\label{eq:shift-R-exact-7}
\Phi U_\alpha R_y\Phi^{-1}f=\Op(\fc_{\omega,\gamma})f.
\end{equation}
%%%
By Lemma~\ref{le:exponent-shift}, the function $\Omega$ belongs to $SO(\mR_+)$.
Then $\Omega^{1/p}$ is also in $SO(\mR_+)$. Therefore, from Lemmas~\ref{le:ff-fs-fr}
and \ref{le:fb} it follows that the function $\fc_{\omega,\gamma}$ belongs to the
algebra $\widetilde{\cE}(\mR_+,V(\mR))\subset C_b(\mR_+,V(\mR))$. Then
Theorem~\ref{th:boundedness-PDO} implies that $\Op(\fc_{\omega,\gamma})$
extends to a bounded operator on $L^p(\mR_+,d\mu)$. Therefore, from
\eqref{eq:shift-R-exact-7} we obtain
$\Phi U_\alpha R_\gamma\Phi^{-1}=\Op(\fc_{\omega,\gamma})$,
which completes the proof.
\end{proof}
%%%--------------------------------------------------------------------------
From the above lemma and Theorem~\ref{th:compactness-PDO},
making minor modifications in the proof of \cite[Lemma~4.5]{KKL14},
we can get the following.
%%%--------------------------------------------------------------------------
\begin{lemma}\label{le:shift-R-gamma}
Suppose $1<p<\infty$ and $\gamma\in\mC$ is such that $0<1/p+\Re\gamma<1$.
Let $\alpha\in SOS(\mR_+)$ and $U_\alpha$ be the associated isometric shift
operator on $L^p(\mR_+)$. Then the operators $U_\alpha R_\gamma$ and
$U_\alpha^{-1}R_\gamma$ can be realized as the Mellin pseudodifferential
operators up to compact operators:
\[
U_\alpha^{\pm 1} R_\gamma \simeq \Phi^{-1}\Op (\fc_{\omega,\gamma}^\pm) \Phi,
\]
where the functions $\fc_{\omega,\gamma}^\pm$, given for $(t,x)\in\mR_+\times\mR$ by
%%%
\begin{equation}\label{eq:shift-R-gamma}
\fc_{\omega,\gamma}^\pm(t,x):=e^{\pm i\omega(t)x}r_\gamma(x)
\quad\mbox{with}\quad
\omega(t):=\log[\alpha(t)/t],
\end{equation}
%%%
belong to the algebra $\widetilde{\cE}(\mR_+,V(\mR))$.
\end{lemma}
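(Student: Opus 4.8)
The plan is to start from the exact representation in Lemma~\ref{le:shift-R-gamma-exact} and to discard the amplitude factor $(1+t\omega'(t))^{1/p}$ at the cost of a compact perturbation. Writing $\Omega(t):=1+t\omega'(t)$, Lemma~\ref{le:shift-R-gamma-exact} gives $U_\alpha R_\gamma=\Phi^{-1}\Op(\fc_{\omega,\gamma})\Phi$ with $\fc_{\omega,\gamma}(t,x)=\Omega(t)^{1/p}e^{i\omega(t)x}r_\gamma(x)$, while the target symbol is $\fc_{\omega,\gamma}^+(t,x)=e^{i\omega(t)x}r_\gamma(x)$, which lies in $\widetilde{\cE}(\mR_+,V(\mR))$ by Lemma~\ref{le:fb} (it is exactly $\fb_{\omega,\gamma}$). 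Since the defining iterated integral \eqref{eq:Mellin-PDO} is linear in the symbol, $\Op(\fc_{\omega,\gamma})-\Op(\fc_{\omega,\gamma}^+)=\Op(\fd)$ with $\fd(t,x):=(\Omega(t)^{1/p}-1)e^{i\omega(t)x}r_\gamma(x)$, so it suffices to prove $\Op(\fd)\in\cK(L^p(\mR_+,d\mu))$ and then conjugate by the isometric isomorphism $\Phi$.

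To get compactness I would invoke Theorem~\ref{th:compactness-PDO}. First, $\fd\in\widetilde{\cE}(\mR_+,V(\mR))\subset\cE(\mR_+,V(\mR))$: by Lemma~\ref{le:exponent-shift} we have $\Omega\in SO(\mR_+)$ and $\inf_t\Omega(t)>0$, hence $\Omega^{1/p}\in SO(\mR_+)$, so $(t,x)\mapsto\Omega(t)^{1/p}-1$ belongs to $\widetilde{\cE}(\mR_+,V(\mR))$ by Lemma~\ref{le:ff-fs-fr}, and $\widetilde{\cE}(\mR_+,V(\mR))$ is a Banach algebra. It then remains to check the decay hypothesis $\lim_{\ln^2 t+x^2\to\infty}\fd(t,x)=0$. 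As $|x|\to\infty$ this is immediate from $r_\gamma(x)=1/\sinh[\pi(x+i/p+i\gamma)]\to0$ together with $|e^{i\omega(t)x}|=1$ and the boundedness of $\Omega^{1/p}-1$; the remaining regime $t\to0,\infty$ is exactly where the real content sits.

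The crucial point is that $\Omega(t)^{1/p}-1\to0$ as $t\to0$ and $t\to\infty$, i.e. $t\omega'(t)\to0$ at the two fixed points. This is the essential regularity of slowly oscillating shifts: writing $\tilde\omega(v)=\omega(e^v)$ and $\tilde\psi(v)=\psi(e^v)$ with $\psi=t\omega'$, one has $\tilde\omega'=\tilde\psi$, where $\tilde\omega$ is bounded (as $\omega\in SO(\mR_+)$) and $\tilde\psi$ has vanishing oscillation over windows of any fixed length. If $\tilde\psi(v_n)\ge2\delta$ for some $v_n\to\infty$, then for each $L$ the oscillation bound forces $\tilde\psi\ge\delta$ on $[v_n,v_n+L]$ for large $n$, whence $\tilde\omega(v_n+L)-\tilde\omega(v_n)\ge\delta L$; letting $L\to\infty$ contradicts the boundedness of $\tilde\omega$. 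Hence $\tilde\psi(v)\to0$ as $v\to\pm\infty$, giving $\Omega(t)\to1$. With this, $\fd$ satisfies the hypotheses of Theorem~\ref{th:compactness-PDO}, $\Op(\fd)$ is compact, and $U_\alpha R_\gamma\simeq\Phi^{-1}\Op(\fc_{\omega,\gamma}^+)\Phi$ follows.

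For $U_\alpha^{-1}R_\gamma$ I would apply the case just proved to the inverse shift $\alpha_{-1}\in SOS(\mR_+)$, since $U_\alpha^{-1}=U_{\alpha_{-1}}$; this yields $U_\alpha^{-1}R_\gamma\simeq\Phi^{-1}\Op(e^{i\omega_{-1}(t)x}r_\gamma(x))\Phi$, where $\omega_{-1}(t)=\log[\alpha_{-1}(t)/t]$. It remains to replace $\omega_{-1}$ by $-\omega$ modulo compacts. From $\alpha(\alpha_{-1}(t))=t$ one gets $\omega_{-1}(t)=-\omega(\alpha_{-1}(t))$, and since $\alpha_{-1}(t)/t=e^{\omega_{-1}(t)}$ stays in a fixed compact subinterval of $(0,\infty)$, the slow oscillation of $\omega$ gives $\omega_{-1}(t)+\omega(t)=\omega(t)-\omega(\alpha_{-1}(t))\to0$ as $t\to0,\infty$. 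Therefore the difference symbol $(e^{i\omega_{-1}(t)x}-e^{-i\omega(t)x})r_\gamma(x)$ again tends to $0$ as $\ln^2t+x^2\to\infty$ and lies in $\cE(\mR_+,V(\mR))$, so Theorem~\ref{th:compactness-PDO} delivers $U_\alpha^{-1}R_\gamma\simeq\Phi^{-1}\Op(\fc_{\omega,\gamma}^-)\Phi$. The main obstacle throughout is the two vanishing statements at the fixed points, $t\omega'(t)\to0$ and $\omega_{-1}+\omega\to0$, which are what make the simplified symbols $\fc_{\omega,\gamma}^\pm$ legitimate; everything else is bookkeeping with the Banach-algebra structure of $\widetilde{\cE}(\mR_+,V(\mR))$ and the linearity of $\Op$.
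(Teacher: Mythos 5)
Your proof is correct and takes essentially the same route as the paper, which obtains the lemma from the exact realization of Lemma~\ref{le:shift-R-gamma-exact} combined with the compactness criterion of Theorem~\ref{th:compactness-PDO}, deferring the remaining details to \cite[Lemma~4.5]{KKL14}. The two vanishing statements you supply from scratch, $t\omega'(t)\to 0$ and $\omega_{-1}(t)+\omega(t)\to 0$ as $t\to 0$ and $t\to\infty$, are precisely the facts that make the difference symbols vanish at infinity in that argument.
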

%%%--------------------------------------------------------------------------------------
\subsection{Similarity relation}
We are ready to prove the main result of this section.
%%%--------------------------------------------------------------------------------------
\begin{theorem}\label{th:similarity}
Suppose $1<p<\infty$ and $\gamma\in\mC$ is such that $0<1/p+\Re\gamma<1$. Let
$\alpha\in SOS(\mR_+)$ and $A_{\alpha,\gamma}$ be the operator given by
\eqref{eq:operator-A}. Then
\[
A_{\alpha,\gamma}\simeq\Phi^{-1}\Op(\fg_{\omega,\gamma})\Phi,
\]
where the function $\fg_{\omega,\gamma}$, given for $(t,x)\in\mR_+\times\mR$ by
%%%
\begin{equation}\label{eq:similarity-1}
\fg_{\omega,\gamma}(t,x):=1+\frac{1}{4}\big[
e^{2\pi\Im\gamma}(1-e^{-i\omega(t)x})+
e^{-2\pi\Im\gamma}(1-e^{i\omega(t)x})
\big]r_\gamma(x) r_{\overline{\gamma}}(x)
\end{equation}
%%%
with $\omega(t):=\log[\alpha(t)/t]$, belongs to the algebra $\widetilde{\cE}(\mR_+,V(\mR))$.
\end{theorem}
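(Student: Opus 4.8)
The plan is to conjugate $A_{\alpha,\gamma}$ by the isometric isomorphism $\Phi$ and to recognize the result, modulo compacts, as a single Mellin pseudodifferential operator whose symbol is $\fg_{\omega,\gamma}$. First I would expand \eqref{eq:operator-A} as
\[
A_{\alpha,\gamma}=I+\frac{e^{2\pi\Im\gamma}}{4}(I-U_\alpha^{-1})R_\gamma R_{\overline{\gamma}}+\frac{e^{-2\pi\Im\gamma}}{4}(I-U_\alpha)R_\gamma R_{\overline{\gamma}},
\]
so that $A_{\alpha,\gamma}$ is a finite linear combination, with constant scalar coefficients, of the three building blocks $I$, $R_\gamma R_{\overline{\gamma}}$, and $U_\alpha^{\pm1}R_\gamma R_{\overline{\gamma}}$. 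Since the map $\fa\mapsto\Op(\fa)$ is additive (clear from the iterated integral \eqref{eq:Mellin-PDO} on $C_0^\infty(\mR_+)$, both sides being bounded by Theorem~\ref{th:boundedness-PDO}), it suffices to identify the $\Phi$-conjugate of each block as a Mellin PDO up to a compact operator and then to add the symbols.

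Second, I would treat the three blocks. Since $\overline{\gamma}$ satisfies $0<1/p+\Re\overline{\gamma}<1$ as well, Lemma~\ref{le:alg-A} gives $\Phi R_\gamma R_{\overline{\gamma}}\Phi^{-1}=\Co(r_\gamma)\Co(r_{\overline{\gamma}})=\Co(r_\gamma r_{\overline{\gamma}})=\Op(\fr_\gamma\fr_{\overline{\gamma}})$ exactly, because Mellin convolution operators compose by multiplying their symbols, and in particular $\Phi R_{\overline{\gamma}}\Phi^{-1}=\Op(\fr_{\overline{\gamma}})$. For the shifted blocks, Lemma~\ref{le:shift-R-gamma} yields $U_\alpha^{\pm1}R_\gamma\simeq\Phi^{-1}\Op(\fc_{\omega,\gamma}^\pm)\Phi$ with $\fc_{\omega,\gamma}^\pm(t,x)=e^{\pm i\omega(t)x}r_\gamma(x)$. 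Multiplying this compact-difference relation on the right by the bounded operator $R_{\overline{\gamma}}$ (so that the difference stays compact) and using the identification of $\Phi R_{\overline{\gamma}}\Phi^{-1}$ above, I get $\Phi U_\alpha^{\pm1}R_\gamma R_{\overline{\gamma}}\Phi^{-1}\simeq\Op(\fc_{\omega,\gamma}^\pm)\Op(\fr_{\overline{\gamma}})$; then Theorem~\ref{th:comp-semi-commutators-PDO} collapses the product of Mellin PDOs into $\Op(\fc_{\omega,\gamma}^\pm\fr_{\overline{\gamma}})$, whose symbol is $e^{\pm i\omega(t)x}r_\gamma(x)r_{\overline{\gamma}}(x)$.

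Third, I would collect the symbols. The two shifted pairs together with the two copies of $R_\gamma R_{\overline{\gamma}}$ contribute the symbol
\[
\frac{e^{2\pi\Im\gamma}}{4}(1-e^{-i\omega(t)x})r_\gamma(x)r_{\overline{\gamma}}(x)
+
\frac{e^{-2\pi\Im\gamma}}{4}(1-e^{i\omega(t)x})r_\gamma(x)r_{\overline{\gamma}}(x),
\]
and adding the symbol $1$ coming from $I$ reproduces exactly the function $\fg_{\omega,\gamma}$ of \eqref{eq:similarity-1}. Combining the relations from the second step through additivity of $\Op$, this gives $\Phi A_{\alpha,\gamma}\Phi^{-1}\simeq\Op(\fg_{\omega,\gamma})$, which is the asserted similarity $A_{\alpha,\gamma}\simeq\Phi^{-1}\Op(\fg_{\omega,\gamma})\Phi$.

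Finally, the membership $\fg_{\omega,\gamma}\in\widetilde{\cE}(\mR_+,V(\mR))$ follows from the Banach-algebra structure of $\widetilde{\cE}(\mR_+,V(\mR))$: the constant $1$ together with $\fr_\gamma$ and $\fr_{\overline{\gamma}}$ lie in the algebra by Lemma~\ref{le:ff-fs-fr}, while $e^{\pm i\omega(t)x}r_\gamma(x)$ lie in it by Lemma~\ref{le:fb} applied to the real-valued slowly oscillating functions $\omega$ and $-\omega$; hence the products $e^{\pm i\omega(t)x}r_\gamma(x)r_{\overline{\gamma}}(x)$ and the finite scalar linear combination defining $\fg_{\omega,\gamma}$ remain in $\widetilde{\cE}(\mR_+,V(\mR))$. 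I expect the main obstacle to be the careful bookkeeping in the second step, namely attaching the extra factor $R_{\overline{\gamma}}$ to the already-approximated operators $U_\alpha^{\pm1}R_\gamma$ and checking that the chain of compact-difference relations is preserved under right multiplication by a bounded operator before Theorem~\ref{th:comp-semi-commutators-PDO} is invoked; the remaining symbol arithmetic and the algebra-membership verification are routine.
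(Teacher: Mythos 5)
Your proposal is correct and takes essentially the same route as the paper: both proofs rest on the exact Mellin-convolution realization of $R_\gamma$ and $R_{\overline{\gamma}}$ from Lemma~\ref{le:alg-A}, the relation $U_\alpha^{\pm 1}R_\gamma\simeq\Phi^{-1}\Op(\fc_{\omega,\gamma}^{\pm})\Phi$ from Lemma~\ref{le:shift-R-gamma}, the semi-commutator compactness of Theorem~\ref{th:comp-semi-commutators-PDO} to collapse products of symbols, and Lemmas~\ref{le:ff-fs-fr} and~\ref{le:fb} plus the Banach-algebra structure of $\widetilde{\cE}(\mR_+,V(\mR))$ for the membership of $\fg_{\omega,\gamma}$. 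The only difference is bookkeeping---the paper groups the terms as $(I-U_\alpha^{\pm 1})R_\gamma R_{\overline{\gamma}}$ whereas you split into four blocks---and your explicit check that right multiplication by the bounded operator $R_{\overline{\gamma}}$ preserves compact differences is exactly the (implicit) step in the paper's computation.
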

%%%--------------------------------------------------------------------------------------
\begin{proof}
From Lemmas~\ref{le:alg-A} and \ref{le:ff-fs-fr} we know that the functions
$\fr_\gamma(t,x)=r_\gamma(x)$ and $\fr_{\overline{\gamma}}(t,x)=r_{\overline{\gamma}}(x)$
with $(t,x)\in\mR_+\times\mR$ belong to $\widetilde{\cE}(\mR_+,V(\mR))$ and
%%%
\begin{equation}\label{eq:similarity-2}
R_\gamma=\Phi^{-1}\Co(r_\gamma)\Phi=\Phi^{-1}\Op(\fr_\gamma)\Phi,
\
R_{\overline{\gamma}}=\Phi^{-1}\Co(r_{\overline{\gamma}})\Phi=\Phi^{-1}\Op(\fr_{\overline{\gamma}})\Phi.
\end{equation}
%%%
From the first identity in \eqref{eq:similarity-2} and Lemma~\ref{le:shift-R-gamma}
it follows that
%%%
\begin{equation}\label{eq:similarity-3}
(I-U_\alpha^{\pm 1})R_\gamma\simeq \Phi^{-1}\Op(1-\fc_{\omega,\gamma}^\pm)\Phi,
\end{equation}
%%%
where the functions $\fc_{\omega,\gamma}^\pm$ given by \eqref{eq:shift-R-gamma} belong
to the algebra $\widetilde{\cE}(\mR_+,V(\mR))$. Then the function $\fg_{\omega,\gamma}$
given by \eqref{eq:similarity-1} belongs to the algebra $\widetilde{\cE}(\mR_+,V(\mR))$.
Combining \eqref{eq:operator-A} and \eqref{eq:similarity-1}--\eqref{eq:similarity-3}
with Theorem~\ref{th:comp-semi-commutators-PDO}, we arrive at
%%%
\begin{align*}
A_{\alpha,\gamma}
&\simeq
I + \frac{1}{4}\big[
e^{2\pi\Im\gamma}\Phi^{-1}\Op(1-\fc_{\omega,\gamma}^-)\Op(\fr_{\overline{\gamma}})\Phi
+
e^{-2\pi\Im\gamma}\Phi^{-1}\Op(1-\fc_{\omega,\gamma}^+)\Op(\fr_{\overline{\gamma}})\Phi
\big]
\\
&\simeq
I + \frac{1}{4}\big[
e^{2\pi\Im\gamma}\Phi^{-1}\Op(1-\fc_{\omega,\gamma}^-\fr_{\overline{\gamma}})\Phi
+
e^{-2\pi\Im\gamma}\Phi^{-1}\Op(1-\fc_{\omega,\gamma}^+\fr_{\overline{\gamma}})\Phi
\big]
\\
&=
\Phi^{-1}\Op(\fg_{\omega,\gamma})\Phi,
\end{align*}
%%%
which completes the proof.
\end{proof}
%%%--------------------------------------------------------------------------------------
\begin{corollary}\label{co:similarity}
Suppose $1<p<\infty$ and $\gamma\in\mC$ is such that $0<1/p+\Re\gamma<1$ and $\alpha\in SOS(\mR_+)$.
Let $\omega(t):=\log[\alpha(t)/t]$ for $t\in\mR_+$ and the function $\fg_{\omega,\gamma}$ be
given for $(t,x)\in\mR_+\times\mR$ by \eqref{eq:similarity-1}. Then the operator $A_{\alpha,\gamma}$
given by \eqref{eq:operator-A} is Fredholm on the space $L^p(\mR_+)$ if and only if the operator
$\Op(\fg_{\omega,\gamma})$ is Fredholm on the space $L^p(\mR_+,d\mu)$ and
$\Ind A_{\alpha,\gamma}=\Ind \Op(\fg_{\omega,\gamma})$.
\end{corollary}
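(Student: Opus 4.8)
The plan is to read the corollary off directly from the similarity relation established in Theorem~\ref{th:similarity}, using only two elementary and completely standard stability properties of the Fredholm property and of the index: their invariance under compact perturbations and their invariance under conjugation by an invertible operator. No new analytic input is needed; all the substantive work has already been done in producing the explicit symbol $\fg_{\omega,\gamma}$.

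First I would invoke Theorem~\ref{th:similarity}, which asserts
\[
A_{\alpha,\gamma}\simeq\Phi^{-1}\Op(\fg_{\omega,\gamma})\Phi,
\]
that is, the operator $A_{\alpha,\gamma}-\Phi^{-1}\Op(\fg_{\omega,\gamma})\Phi$ is compact on $L^p(\mR_+)$. Since $\Phi$ is an isometric isomorphism of $L^p(\mR_+)$ onto $L^p(\mR_+,d\mu)$, the map $B\mapsto\Phi^{-1}B\Phi$ is an isomorphism of the Banach algebra $\cB(L^p(\mR_+,d\mu))$ onto $\cB(L^p(\mR_+))$ that carries $\cK(L^p(\mR_+,d\mu))$ onto $\cK(L^p(\mR_+))$ and preserves the dimensions of kernels and cokernels. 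Consequently $\Op(\fg_{\omega,\gamma})$ is Fredholm on $L^p(\mR_+,d\mu)$ if and only if $\Phi^{-1}\Op(\fg_{\omega,\gamma})\Phi$ is Fredholm on $L^p(\mR_+)$, and in that case the two indices coincide.

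It then remains to observe that adding a compact operator alters neither the Fredholm property nor the index (see, e.g., \cite[Chap.~4]{GK92}). Hence $A_{\alpha,\gamma}$ is Fredholm on $L^p(\mR_+)$ exactly when $\Phi^{-1}\Op(\fg_{\omega,\gamma})\Phi$ is, with the same index; chaining the two equivalences yields both the Fredholmness criterion and the equality $\Ind A_{\alpha,\gamma}=\Ind\Op(\fg_{\omega,\gamma})$ claimed in the corollary.

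I do not anticipate any genuine obstacle: the construction of $\fg_{\omega,\gamma}$ and the verification that it lies in $\widetilde{\cE}(\mR_+,V(\mR))$ were the hard steps, and they are already dispatched in Theorem~\ref{th:similarity}. The only point requiring (minor) care is to note that an isometric isomorphism is in particular bounded with bounded inverse, so that conjugation by $\Phi$ really does send compact operators to compact operators and preserve the index; this is precisely what licenses discarding the compact remainder in the similarity relation without affecting either conclusion.
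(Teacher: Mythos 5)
Your proof is correct and matches the paper's (implicit) reasoning exactly: the paper states this corollary without a separate proof, treating it as an immediate consequence of Theorem~\ref{th:similarity} via the same two standard facts you cite, namely invariance of Fredholmness and index under conjugation by the isometric isomorphism $\Phi$ and under compact perturbations. Nothing is missing.
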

%%%--------------------------------------------------------------------------------------
Thus, in order to complete the proof of Theorem~\ref{th:one-shift}, it remains to show that
the operator $\Op(\fg_{\omega,\gamma})$ is Fredholm and its index is equal to zero.
%%%--------------------------------------------------------------------------------------
\section{Fredholm theory for the operator $\Op(\fg_{\omega,\gamma})$}
\label{sec:Fredholmness-of-my-PDO}
In this section we prove that the Mellin pseudodifferential operator with symbol
given by \eqref{eq:similarity-1} is Fredholm and its index is equal to zero. To do this,
we employ Theorem~\ref{th:Fredholmness-PDO}.
%%%--------------------------------------------------------------------------------------
\subsection{Technical lemma}
For the calculation of the index of operator $\Op(\fg_{\omega,\gamma})$, we are going to
use a homotopic argument, linking the function $\fg_{\omega,\gamma}$ and the function that
equals identically one via a family of functions $\widetilde{\fg}_{\omega,\gamma}(\cdot,\cdot,\theta)$
with $\theta\in[0,1]$ described below.
%%%--------------------------------------------------------------------------------------
\begin{lemma}\label{le:tech}
Let $1<p<\infty$, $\gamma\in\mC$ be such that $0<1/p+\Re\gamma<1$, and $\omega$
be a real-valued function in $SO(\mR_+)$. If
%%%
\begin{equation}\label{eq:tech-1}
\widetilde{\fg}_{\omega,\gamma}(t,x,\theta):=1+\frac{1}{4}\big[
e^{2\pi\Im\gamma}(1-e^{-i\theta\omega(t)x})+
e^{-2\pi\Im\gamma}(1-e^{i\theta\omega(t)x})
\big]r_\gamma(x) r_{\overline{\gamma}}(x)
\end{equation}
%%%
for $(t,x,\theta)\in\mR_+\times\mR\times[0,1]$,
\[
\widetilde{\fg}_{\omega,\gamma}(t,\pm\infty,\theta)
:=
\lim_{x\to\pm\infty}\widetilde{\fg}_{\omega,\gamma}(t,x,\theta)=1
\]
for $(t,\theta)\in\mR_+\times[0,1]$, and \eqref{eq:main-condition} is fulfilled,
then there is a constant $c=c(p,\gamma,\omega)>0$,
depending only on $p$, $\gamma$, and $\omega$, and such that
%%%
\begin{equation}\label{eq:tech-2}
|\widetilde{\fg}_{\omega,\gamma}(t,x,\theta)|\ge c
\quad\mbox{for all}\quad
(t,x,\theta)\in\mR_+\times\overline{\mR}\times[0,1].
\end{equation}
\end{lemma}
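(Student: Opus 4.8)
The plan is to rewrite $\widetilde{\fg}_{\omega,\gamma}$ as a single rational expression in hyperbolic functions, to factor its numerator so that its zeros become visible, and finally to promote pointwise non-vanishing to the uniform estimate \eqref{eq:tech-2} by a compactness argument that exploits the boundedness of $\omega$. Throughout I write $a:=1/p+\Re\gamma$, $b:=\Im\gamma$, and $u:=\theta\omega(t)x$; note $a\in(0,1)$ by hypothesis. First I would combine the two factors $r_\gamma,r_{\overline\gamma}$ from Lemma~\ref{le:alg-A} by means of $\sinh z_1\sinh z_2=\tfrac12[\cosh(z_1+z_2)-\cosh(z_1-z_2)]$, which gives $r_\gamma(x)r_{\overline\gamma}(x)=2/\bigl(\cosh[2\pi(x+ia)]-\cosh(2\pi b)\bigr)$. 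Rewriting the bracket in \eqref{eq:tech-1} as $2\cosh(2\pi b)-2\cosh(2\pi b-iu)$ and simplifying, I obtain for every finite $x$
\[
\widetilde{\fg}_{\omega,\gamma}(t,x,\theta)
=\frac{\cosh[2\pi(x+ia)]-\cosh(2\pi b-iu)}{\cosh[2\pi(x+ia)]-\cosh(2\pi b)}
=:\frac{N(t,x,\theta)}{D(x)}.
\]

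Next I would factor the numerator. Applying $\cosh A-\cosh B=2\sinh\tfrac{A+B}{2}\sinh\tfrac{A-B}{2}$ with $A=2\pi(x+ia)$ and $B=2\pi b-iu$ yields
\[
N(t,x,\theta)=2\sinh\!\bigl[\pi(x+b)+i(\pi a-u/2)\bigr]\,\sinh\!\bigl[\pi(x-b)+i(\pi a+u/2)\bigr].
\]
The same identity applied to the denominator shows $D(x)=2\sinh[\pi(x+b)+i\pi a]\sinh[\pi(x-b)+i\pi a]$, which is non-zero for every finite $x$ precisely because $a\in(0,1)$, so the fraction is well defined. Now I would establish pointwise non-vanishing of $N$. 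Since $\sinh(P+iQ)=0$ forces $P=0$ and $Q\in\pi\mZ$, the first factor of $N$ can vanish only at $x=-b$ and the second only at $x=b$; at either point the relevant imaginary part equals $\pi\bigl(a+\theta\omega(t)b/(2\pi)\bigr)$. The crucial observation is that \eqref{eq:main-condition} yields $0<a+\omega(t)b/(2\pi)<1$ for every $t\in\mR_+$, whence by convexity
\[
a+\frac{\theta\omega(t)b}{2\pi}=(1-\theta)\,a+\theta\Bigl(a+\frac{\omega(t)b}{2\pi}\Bigr)\in(0,1)
\quad\text{for all }\theta\in[0,1].
\]
In particular this number is never an integer, both $\sinh$-factors are non-zero, and hence $\widetilde{\fg}_{\omega,\gamma}(t,x,\theta)\ne0$ for all finite $x$; at $x=\pm\infty$ its value is $1$.

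The main obstacle is upgrading this to the \emph{uniform} bound \eqref{eq:tech-2}, since $\mR_+\times\overline{\mR}\times[0,1]$ is not compact. I would overcome it in two steps. First, $D(x)$ depends on $x$ alone and $|D(x)|\to\infty$ like $\tfrac12e^{2\pi|x|}$, while $|N(t,x,\theta)-D(x)|=|\cosh(2\pi b)-\cosh(2\pi b-iu)|\le 2\cosh(2\pi b)$ is bounded uniformly in $(t,\theta)$; hence $\widetilde{\fg}_{\omega,\gamma}\to1$ uniformly in $(t,\theta)$ as $|x|\to\infty$, and there is $X>0$ with $|\widetilde{\fg}_{\omega,\gamma}|\ge1/2$ whenever $|x|\ge X$ (including $x=\pm\infty$). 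Second, $\widetilde{\fg}_{\omega,\gamma}(t,x,\theta)$ depends on $t$ only through $v:=\omega(t)$, which lies in the bounded interval $[\omega_-,\omega_+]$ with $\omega_-:=\inf_{t}\omega(t)$ and $\omega_+:=\sup_{t}\omega(t)$. Since the values $vb$, $v\in[\omega_-,\omega_+]$, fill the interval with endpoints $\inf_{t}(\omega(t)b)$ and $\sup_{t}(\omega(t)b)$, the convexity argument above together with the strict inequalities in \eqref{eq:main-condition} gives $a+\theta vb/(2\pi)\in(0,1)$ for every $v\in[\omega_-,\omega_+]$ and $\theta\in[0,1]$; thus the corresponding function of $(v,x,\theta)$ is continuous and non-vanishing on the compact set $[\omega_-,\omega_+]\times[-X,X]\times[0,1]$, hence bounded below there by some $c_1>0$. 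Taking $c:=\min\{c_1,1/2\}$ yields \eqref{eq:tech-2}, because $\omega(t)\in[\omega_-,\omega_+]$ for every $t\in\mR_+$.
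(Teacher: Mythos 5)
Your proof is correct, and its first half is in substance the same as the paper's: after clearing $r_\gamma r_{\overline\gamma}$ you arrive at exactly the factorization
\[
\widetilde{\fg}_{\omega,\gamma}(t,x,\theta)
=\frac{\sinh[\pi(x+\Im\gamma)+i(\pi a-u/2)]}{\sinh[\pi(x+\Im\gamma)+i\pi a]}
\cdot
\frac{\sinh[\pi(x-\Im\gamma)+i(\pi a+u/2)]}{\sinh[\pi(x-\Im\gamma)+i\pi a]},
\quad a=\tfrac1p+\Re\gamma,\ u=\theta\omega(t)x,
\]
which is the identity at the heart of the paper's proof (there it is reached by expanding $4/(r_\gamma r_{\overline\gamma})$ directly rather than via your $\cosh$-difference identities, but the resulting product is the same, and your convexity observation that $a+\theta\omega(t)\Im\gamma/(2\pi)$ stays in $(0,1)$ is precisely the paper's inequality \eqref{eq:tech-5}). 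Where you genuinely diverge is in how pointwise non-vanishing is upgraded to the uniform bound \eqref{eq:tech-2}. The paper stays quantitative: using $|\sinh(x+iy)|^2=\sinh^2x+\sin^2y$ it writes $|\widetilde{\fg}_{\omega,\gamma}|=\fg_+\fg_-$ and produces explicit constants --- for $|x|$ small the relevant angle is trapped in $[\pi/(2q),\pi-\pi/(2q)]$, so the numerator's $\sin^2$ term is at least $\sin^2(\pi/(2q))$, while for $|x|$ large the $\sinh^2$ term dominates. You instead argue softly: the symbol depends on $t$ only through $v=\omega(t)$, which ranges in the compact interval $[\omega_-,\omega_+]$; non-vanishing persists on that closure because $vb$ still lies between $\inf_t(\omega(t)\Im\gamma)$ and $\sup_t(\omega(t)\Im\gamma)$; the symbol tends to $1$ uniformly in $(t,\theta)$ as $|x|\to\infty$ since the numerator and denominator differ by the bounded quantity $\cosh(2\pi\Im\gamma)-\cosh(2\pi\Im\gamma-iu)$ while $|D(x)|\to\infty$; and then continuity plus compactness of $[\omega_-,\omega_+]\times[-X,X]\times[0,1]$ yields the lower bound. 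Your route is shorter, and the reduction of the non-compact variable $t$ to the compact parameter $v=\omega(t)$ is an elegant trick; its price is a non-constructive constant, whereas the paper's longer computation delivers a completely explicit $c(p,\gamma,\omega)$, which can matter if one wants quantitative control. Both arguments are complete proofs of the lemma.
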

%%%--------------------------------------------------------------------------------------
\begin{proof}
We follow the main lines of the proof of \cite[Lemma~5.1]{KKL14}. It is easy to see
that for every $(t,x)\in\mR_+\times\mR$,
%%%
\begin{align}
\frac{4}{r_\gamma(x)r_{\overline{\gamma}}(x)}
&=
4\sinh[\pi(x+i/p+i\gamma)]\sinh[\pi(x+i/p+i\overline{\gamma})]
\nonumber\\
&=
e^{2\pi(x+i/p+i\Re\gamma)}+e^{-2\pi(x+i/p+i\Re\gamma)}-e^{2\pi\Im\gamma}-e^{-2\pi\Im\gamma}.
\label{eq:tech-3}
\end{align}
%%%
From \eqref{eq:tech-1} and \eqref{eq:tech-3} we deduce that the function
$\widetilde{\fg}_{\omega,\gamma}$ can be rewritten as follows:
%%%
\begin{align*}
&\widetilde{\fg}_{\omega,\gamma}(t,x,\theta)
=
\big[
e^{2\pi(x+i/p+i\Re\gamma)}+e^{-2\pi(x+i/p+i\Re\gamma)}-e^{2\pi\Im\gamma}-e^{-2\pi\Im\gamma}
\\
&\quad+
e^{2\pi\Im\gamma}(1-e^{-i\theta\omega(t)x})+e^{-2\pi\Im\gamma}(1-e^{i\theta\omega(t)x})
\big]\frac{r_\gamma(x) r_{\overline{\gamma}}(x)}{4}
\\
&=
\Big[
\frac{e^{2\pi(x+i/p+i\Re\gamma)}+e^{-2\pi(x+i/p+i\Re\gamma)}}{2}-
\frac{e^{i\theta\omega(t)x-2\pi\Im\gamma}
+
e^{2\pi\Im\gamma-i\theta\omega(t)x}}{2}
\Big]\frac{r_\gamma(x) r_{\overline{\gamma}}(x)}{2}
\\
&=
\frac{\cosh[2\pi(x+i/p+i\Re\gamma)]-\cosh[i\theta\omega(t)x-2\pi\Im\gamma]}{2}r_\gamma(x) r_{\overline{\gamma}}(x)
\\
&=
\frac{\sinh[\pi(x-\Im\gamma+i/p+i\Re\gamma)+i\theta\omega(t)x/2]}{\sinh[\pi(x-\Im\gamma+i/p+i\Re\gamma)]}
\\
&\quad\times
\frac{\sinh[\pi(x+\Im\gamma+i/p+i\Re\gamma)-i\theta\omega(t)x/2]}{\sinh[\pi(x+\Im\gamma+i/p+i\Re\gamma)]},
\end{align*}
%%%
whence, by \cite[formula 4.5.54]{AS72},
%%%
\begin{align}
|\widetilde{\fg}_{\omega,\gamma}(t,x,\theta)|
=&
\sqrt{\frac{\sinh^2[\pi(x-\Im\gamma)]+\sin^2[\pi(1/p+\Re\gamma)+\theta\omega(t)x/2]}{\sinh^2[\pi(x-\Im\gamma)]+\sin^2[\pi(1/p+\Re\gamma)]}}
\nonumber\\
&\times
\sqrt{\frac{\sinh^2[\pi(x+\Im\gamma)]+\sin^2[\pi(1/p+\Re\gamma)-\theta\omega(t)x/2]}{\sinh^2[\pi(x+\Im\gamma)]+\sin^2[\pi(1/p+\Re\gamma)]}}
\nonumber\\
&
=:\fg_+(t,x,\theta)\fg_-(t,x,\theta).
\label{eq:tech-4}
\end{align}
%%% major corrections start here
Thus, we need to estimate from below the functions
\[
\fg_\pm(t,x\pm\Im\gamma,\theta)=
\sqrt{\frac{\sinh^2(\pi x)+\sin^2[\pi(1/p+\Re\gamma)
+\theta\omega(t)\Im\gamma/2\pm\theta\omega(t)x/2]}
{\sinh^2(\pi x)+\sin^2[\pi(1/p+\Re\gamma)]}}.
\]
From $0<1/p+\Re\gamma<1$ and \eqref{eq:main-condition} it follows that
%%%
\begin{align*}
\mathcal{I}
&:=
\frac{1}{p}+\Re\gamma+\min\left(0,\frac{1}{2\pi}\inf_{t\in\mR_+}(\omega(t)\Im\gamma)\right)>0,
\\
\mathcal{S}
&:=
\frac{1}{p}+\Re\gamma+\max\left(0,\frac{1}{2\pi}\sup_{t\in\mR_+}(\omega(t)\Im\gamma)\right)<1.
\end{align*}
%%%
It is easy to see that
%%%
\begin{equation}\label{eq:tech-5}
\mathcal{I}\le\frac{1}{p}+\Re\gamma+\frac{\theta\omega(t)\Im\gamma}{2\pi}\le\mathcal{S},
\quad t\in\mR_+,\quad \theta\in[0,1].
\end{equation}
Let
\[
M(\omega):=\sup_{t\in\mR_+}|\omega(t)|
\]
and $q\in[2,+\infty)$ be defined by $1/q:=\min(\mathcal{I},1-\mathcal{S})$. Then
\begin{equation}\label{eq:tech-6}
\left|\frac{\theta\omega(t)x}{2}\right|
\le\frac{M(\omega)}{2}|x|\le\frac{\pi}{2q}\quad\mbox{if}\quad
|x|\le\frac{\pi}{qM(\omega)},
\quad t\in\mR_+,
\quad \theta\in[0,1].
\end{equation}
Further, from \eqref{eq:tech-5}--\eqref{eq:tech-6} we obtain
%%%
\begin{align*}
\frac{\pi}{2q}
&=\pi\frac{1}{q}-\frac{\pi}{2q}
\le\pi\mathcal{I}-\frac{\pi}{2q}
\le
\pi\left(\frac{1}{p}+\Re\gamma\right)+\frac{\theta\omega(t)\Im\gamma}{2}
\pm\frac{\theta\omega(t)x}{2}
\\
&\le\pi\mathcal{S}+\frac{\pi}{2q}\le
\pi\left(1-\frac{1}{q}\right)+\frac{\pi}{2q}
=
\pi-\frac{\pi}{2q}
\end{align*}
%%%
for all $|x|\le\frac{\pi}{qM(\omega)}$, $t\in\mR_+$, and $\theta\in[0,1]$. Put
\[
J_\pm:=\left[-\frac{\pi}{qM(\omega)}\pm\Im\gamma,\frac{\pi}{qM(\omega)}\pm\Im\gamma\right].
\]
%%%
Hence
\[
\frac{\pi}{2q}
\le
\pi\left(\frac{1}{\pi}+\Re\gamma\right)\pm\frac{\theta\omega(t)x}{2}
\le
\pi-\frac{\pi}{2q}\quad\mbox{if}
\quad x\in J_\pm,
\quad t\in\mR_+,
\quad \theta\in[0,1],
\]
whence
%%%
\begin{equation}\label{eq:tech-7}
\sin^2\left[
\pi\left(\frac{1}{\pi}+\Re\gamma\right)\pm\frac{\theta\omega(t)x}{2}
\right]
\ge
\sin^2\left(\frac{\pi}{2q}\right),
\ x\in J_\pm,
\ t\in\mR_+,
\ \theta\in[0,1].
\end{equation}
%%%
Since the functions $\varphi_\pm(x)=\sinh^2[\pi(x\pm\Im\gamma)]$ are
increasing on $[\mp\Im\gamma,+\infty)$ and are decreasing on
$(-\infty,\mp\Im\gamma]$, taking into account \eqref{eq:tech-7},
we obtain
%%%
\begin{equation}\label{eq:tech-8}
\fg_\pm(t,x,\theta)
\ge
\frac{\sin[\pi/(2q)]}
{\sqrt{\sinh^2[\pi^2/(qM(\omega))]+\sin^2[\pi(1/p+\Re\gamma)]}}
=:c_1(p,\gamma,\omega)>0
\end{equation}
%%%
for all $x\in J_\pm$, $t\in\mR_+$, and $\theta\in[0,1]$. On the other hand,
%%%
\begin{align}
\fg_\pm(t,x,\theta)
&\ge
\sqrt{\frac{\sinh^2[\pi(x\mp\Im\gamma)]}{\sinh^2[\pi(x\mp\Im\gamma)]+\sin^2[\pi(1/p+\Re\gamma)]}}
\nonumber\\
&=
\left(1+\sin^2[\pi(1/p+\Re\gamma)]\sinh^{-2}[\pi(x\mp\Im\gamma)]\right)^{-1/2}
\nonumber\\
&\ge
\left(1+\sin^2[\pi(1/p+\Re\gamma)]\sinh^{-2}[\pi^2/(qM(\omega))]\right)^{-1/2}
\nonumber\\
&=:
c_2(p,\gamma,\omega)>0
\label{eq:tech-9}
\end{align}
%%%
for all $x\in\mR\setminus J_\pm$, $t\in\mR_+$, and $\theta\in[0,1]$.
Combining \eqref{eq:tech-4} and \eqref{eq:tech-8}--\eqref{eq:tech-9},
we arrive at \eqref{eq:tech-2} with
\[
c:=c(p,\gamma,\omega)=\big(\min(c_1(p,\gamma,\omega),c_2(p,\gamma,\omega))\big)^2,
\]
which completes the proof.
\end{proof}
%%%--------------------------------------------------------------------------------------
\subsection{Fredholmness and index of the operator $\Op(\fg_{\omega,\gamma})$}
Now we are ready to finish the proof of Theorem~\ref{th:one-shift}.
%%%--------------------------------------------------------------------------------------
\begin{theorem}\label{th:Fredholmness-of-my-PDO}
Suppose $1<p<\infty$ and $\gamma\in\mC$ is such that $0<1/p+\Re\gamma<1$. Let
$\omega\in SO(\mR_+)$ be a real-valued function and the function $\fg_{\omega,\gamma}$
be given for $(t,x)\in\mR_+\times\mR$ by \eqref{eq:similarity-1}. If \eqref{eq:main-condition}
holds, then the operator $\Op(\fg_{\omega,\gamma})$ is Fredholm on the space $L^p(\mR_+,d\mu)$
and $\Ind\Op(\fg_{\omega,\gamma})=0$.
\end{theorem}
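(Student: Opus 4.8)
The plan is to invoke the Fredholm criterion and index formula of Theorem~\ref{th:Fredholmness-PDO}. By Theorem~\ref{th:similarity} the symbol $\fg_{\omega,\gamma}$ belongs to $\widetilde{\cE}(\mR_+,V(\mR))$, so Theorem~\ref{th:Fredholmness-PDO} is applicable, and it remains only to check the nonvanishing conditions \eqref{eq:Fredholmness-PDO} and to evaluate the winding number in the index formula. The crucial input is the uniform lower bound furnished by Lemma~\ref{le:tech}: observing that $\widetilde{\fg}_{\omega,\gamma}(t,x,1)=\fg_{\omega,\gamma}(t,x)$, specializing \eqref{eq:tech-2} to $\theta=1$ yields a constant $c>0$ with $|\fg_{\omega,\gamma}(t,x)|\ge c$ for all $(t,x)\in\mR_+\times\overline{\mR}$.

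First I would establish Fredholmness. Since $r_\gamma(x)r_{\overline{\gamma}}(x)\to0$ as $x\to\pm\infty$, one has $\fg_{\omega,\gamma}(t,\pm\infty)=1\ne0$ for every $t\in\mR_+$, which disposes of the first condition in \eqref{eq:Fredholmness-PDO}. For the second condition I would handle the fibers over $0$ and $\infty$ by means of Lemma~\ref{le:values}: given $\xi\in\Delta$ and $x\in\overline{\mR}$, there is a sequence $t_n\to s\in\{0,\infty\}$ with $\fg_{\omega,\gamma}(\xi,x)=\lim_{n\to\infty}\fg_{\omega,\gamma}(t_n,x)$, whence $|\fg_{\omega,\gamma}(\xi,x)|=\lim_{n\to\infty}|\fg_{\omega,\gamma}(t_n,x)|\ge c>0$. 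Thus $\fg_{\omega,\gamma}(\xi,x)\ne0$ on all of $\Delta\times\overline{\mR}$, both conditions \eqref{eq:Fredholmness-PDO} are satisfied, and $\Op(\fg_{\omega,\gamma})$ is Fredholm.

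For the index I would run a homotopy argument built on the family $\widetilde{\fg}_{\omega,\gamma}(\cdot,\cdot,\theta)$, $\theta\in[0,1]$, which interpolates between $\widetilde{\fg}_{\omega,\gamma}(\cdot,\cdot,0)\equiv1$ and $\widetilde{\fg}_{\omega,\gamma}(\cdot,\cdot,1)=\fg_{\omega,\gamma}$. By Lemma~\ref{le:tech} this family is nowhere zero on $\mR_+\times\overline{\mR}\times[0,1]$. Because $r_\gamma r_{\overline{\gamma}}$ decays exponentially and uniformly at $\pm\infty$, the family is jointly continuous on the compact set $\partial\Pi_\tau\times[0,1]$ for each fixed $\tau$; hence the increment $N(\theta):=\frac{1}{2\pi}\{\arg\widetilde{\fg}_{\omega,\gamma}(t,x,\theta)\}_{(t,x)\in\partial\Pi_\tau}$ is a well-defined integer, being the argument increment of a nonvanishing continuous function around the closed contour $\partial\Pi_\tau$, and it depends continuously on $\theta$. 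An integer-valued continuous function is constant, so $N(\theta)\equiv N(0)$. Since the symbol is identically $1$ at $\theta=0$, we have $N(0)=0$, hence $N(1)=0$ for every $\tau$; letting $\tau\to+\infty$ in the index formula of Theorem~\ref{th:Fredholmness-PDO} gives $\Ind\Op(\fg_{\omega,\gamma})=0$.

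All the verifications above rest entirely on Lemma~\ref{le:tech}, so the genuine analytic difficulty of the theorem is already absorbed there. The only point requiring minor care in the present argument is the joint continuity of the homotopy up to $x=\pm\infty$, which is what guarantees that the argument increment around the closed contour $\partial\Pi_\tau$ is integer-valued and varies continuously with $\theta$; this is precisely the step where the exponential decay of $r_\gamma r_{\overline{\gamma}}$ is used.
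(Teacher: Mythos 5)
Your proposal is correct and follows essentially the same route as the paper's own proof: nonvanishing of the symbol via Lemma~\ref{le:tech} (specialized to $\theta=1$) together with Lemma~\ref{le:values} on the fibers $\Delta\times\overline{\mR}$, Fredholmness via Theorem~\ref{th:Fredholmness-PDO}, and vanishing of the index by the identical homotopy $\widetilde{\fg}_{\omega,\gamma}(\cdot,\cdot,\theta)$ connecting $\fg_{\omega,\gamma}$ to the constant symbol $1$. One small citation slip: since the theorem assumes an arbitrary real-valued $\omega\in SO(\mR_+)$, not necessarily of the form $\log[\alpha(t)/t]$ for a shift $\alpha\in SOS(\mR_+)$, the membership $\fg_{\omega,\gamma}\in\widetilde{\cE}(\mR_+,V(\mR))$ should be drawn from Lemmas~\ref{le:ff-fs-fr} and~\ref{le:fb} (as the paper does) rather than from Theorem~\ref{th:similarity}, which is stated only for shift-induced $\omega$.
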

%%%--------------------------------------------------------------------------------------
\begin{proof}
The proof is developed exactly as in \cite[Theorem~5.2]{KKL14}.
In view of Lemmas~\ref{le:ff-fs-fr} and~\ref{le:fb}, the function
$\fg_{\omega,\gamma}$ belongs to $\widetilde{\cE}(\mR_+,V(\mR))$.
From \eqref{eq:similarity-1} and
Lemmas~\ref{le:values} and \ref{le:tech} it follows that
\[
\fg_{\omega,\gamma}(t,\pm\infty)=1\ne 0
\quad\mbox{for all}\quad t\in\mR_+,
\quad
\fg_{\omega,\gamma}(\xi,x)\ne 0
\quad\mbox{for all}\quad
(\xi,x)\in\Delta\times\overline{\mR}.
\]
Thus, by Theorem~\ref{th:Fredholmness-PDO}, the operator $\Op(\fg_{\omega,\gamma})$
is Fredholm on $L^p(\mR_+,d\mu)$.

For $\tau>1$, consider $\Pi_\tau:=[\tau^{-1},\tau]\times\overline{\mR}$.
Since the function $\widetilde{\fg}_{\omega,\gamma}$ given by \eqref{eq:tech-1}
is continuous and separated from $0$ for all
$(t,x,\theta)\in\mR_+\times\overline{\mR}\times[0,1]$
in view of Lemma~\ref{le:tech}, we conclude that
$\{\arg\widetilde{\fg}_{\omega,\gamma}(t,x,\theta)\}_{(t,x)\in\partial\Pi_\tau}$
does not depend on $\theta\in[0,1]$. Consequently,
%%%
\begin{equation}\label{eq:Fredholmness-of-my-PDO-1}
\{\arg\fg_{\omega,\gamma}(t,x)\}_{(t,x)\in\partial\Pi_\tau}
=
\{\arg\widetilde{\fg}_{\omega,\gamma}(t,x,0)\}_{(t,x)\in\partial\Pi_\tau}=0.
\end{equation}
%%%
By Theorem~\ref{th:Fredholmness-PDO} and \eqref{eq:Fredholmness-of-my-PDO-1},
\[
\Ind\operatorname{Op}(\fg_{\omega,\gamma})
=
\lim_{\tau\to\infty}\frac{1}{2\pi}\{\arg\fg_{\omega,\gamma}(t,x)\}_{(t,x)\in\partial\Pi_\tau}
=0,
\]
which completes the proof of the theorem.
\end{proof}
%%%--------------------------------------------------------------------------------------
Finally, Theorem~\ref{th:one-shift} follows from Theorem~\ref{th:Fredholmness-of-my-PDO},
Corollary~\ref{co:similarity}, and Theorem~\ref{th:reduction}.

\bigskip
\noindent
{\bf Acknowledgement.}
This work was partially supported by the Funda\c{c}\~ao para a Ci\^encia e a Tecnologia
(Portuguese Foundation for Science and Technology) through the project
PEst-OE/MAT/UI0297/2014 (Centro de Matem\'atica e Apli\-ca\c{c}\~oes).
The authors would like to thank the anonymous referee for useful remarks
and for informing about the work \cite{DKT13}.
%%%--------------------------------------------------------------------------------------
\bibliographystyle{amsplain}

\end{document}